\documentclass[paper=a4, fontsize=11pt]{article} 

\usepackage[left=2.5cm, right=2.5cm, top=2.5cm, bottom=2.2cm]{geometry}

\usepackage[T1]{fontenc}
\usepackage[utf8]{inputenc}
\usepackage[french,english]{babel}

\usepackage{amsmath,amssymb,amsfonts,amsthm}
\usepackage{mathrsfs}

\usepackage{url}
\usepackage[colorlinks=true]{hyperref}
\hypersetup{colorlinks, %breaklinks,
            citecolor=[rgb]{0,0,0.7}, %red, %[rgb]{0.4,0,0},   %{0.87,0.15,0.3}, rose
            linkcolor=[rgb]{0,0,0.7}
}

\usepackage{bbm}
\usepackage{enumitem}
\usepackage[numbers,comma,sort]{natbib}

\usepackage{xparse}
\usepackage{xargs}

\setlength\bibsep{0.0 pt}

\numberwithin{equation}{section}		

\newtheorem{theorem}{Theorem}[section]
\newtheorem{lemma}[theorem]{Lemma}
\newtheorem{corollary}[theorem]{Corollary}

\newtheorem{definition}[theorem]{Definition}
\newtheorem{proposition}[theorem]{Proposition}
\newtheorem{remark}[theorem]{Remark}

\newcommand{\R}{\mathbb{R}}

\newcommand{\N}{\mathbb{N}}
\newcommand{\EE}{\mathbb{E}}
\newcommand{\PP}{\mathbb{P}}

\newcommand{\dd}{~d}   %{\textrm{d}}

\newcommand{\pv}{p}
\newcommand{\ppv}{\frac{p}{2}}

\newcommand{\Cb}{\mathcal{C}_b}
\newcommand{\HH}{\mathcal{H}}
\newcommand{\RP}{\mathbf{X}}

\newcommand{\Vp}{\mathcal{V}^p}
\newcommand{\Vcalp}{\mathscr{V}^p}
\newcommand{\Cbeta}{\mathcal{C}^{\beta}}
\newcommand{\Ccalbeta}{\mathscr{C}^{\beta}}
\newcommand{\vertiii}[1]{{\left\vert\kern-0.25ex\left\vert\kern-0.25ex\left\vert #1 
		\right\vert\kern-0.25ex\right\vert\kern-0.25ex\right\vert}}

\newcommand{\Rsn}{R^{\sigma(Y^n)}}

\newcommand{\Ri}{R^{Y^i}}
\newcommand{\Rsi}{R^{\sigma(Y^i)}}

\newcommand{\kappab}{\makebox{\Large\ensuremath{\kappa}}}

\newcommandx{\flow}[4][1=\mathbf{X},2=y,3=t,4=0]{U_{#3\leftarrow #4}^{#1;#2}}
\newcommandx{\Jac}[4][1=\mathbf{X},2=y,3=t,4=0]{J_{#3\leftarrow #4}^{#1;#2}}

\begin{document}

\title{\normalfont \Large  {Penalisation techniques for one-dimensional reflected rough differential equations}}

\author{Alexandre Richard\thanks{Universit\'e Paris-Saclay, CentraleSup\'elec, MICS and CNRS FR-3487, France; \texttt{alexandre.richard@centralesupelec.fr}.}
\and Etienne Tanré \thanks{Universit\'e C\^ote d'Azur, Inria, France; \texttt{Etienne.Tanre@inria.fr}.}
\and Soledad Torres \thanks{Facultad de Ingenier\'ia, CIMFAV, Universidad de Valpara\'iso, Casilla 123-V, 4059 Valpara\'iso, Chile; \texttt{soledad.torres@uv.cl}. }
}

\date{\small \today}

\maketitle

\begin{abstract}
		In this paper we solve real-valued rough differential equations (RDEs) reflected on an irregular boundary. The solution $Y$ is constructed as the limit of a sequence $(Y^n)_{n\in\mathbb{N}}$ of solutions to RDEs with unbounded drifts $(\psi_n)_{n\in\mathbb{N}}$. The penalisation $\psi_n$ increases with $n$. Along the way, we thus also provide an existence theorem and a Doss-Sussmann representation for RDEs with a drift growing at most linearly. In addition, a speed of convergence of the sequence of penalised paths to the reflected solution is obtained. \\
	We finally use the penalisation method to prove that the law at time $t>0$ of some reflected Gaussian RDE is absolutely contiuous with respect to the Lebesgue measure.
\end{abstract}

\noindent {\sl Key words\/}: Reflected rough differential equation;
Penalisation;  Gaussian noise; Skorokhod problem.

\noindent {\sl MSC2010 Subject Classification\/}: 34F05, 60G15, 60H10.

\section{Introduction}

Solving (stochastic) differential equations with a reflecting boundary condition is by now a classical problem. For a domain $D\subseteq \R^e$, a mapping 
$\sigma:\R^e\rightarrow \R^{e\times d}$, 
an initial value $y_0\in \overline{D}$ and an $\R^d$-valued path $X=(X_t)_{t\in[0,T]}$ sometimes referred as the noise, this problem consists formally in finding $\R^e$-valued paths $(Y_t)_{t\in[0,T]}$ and $(K_t)_{t\in[0,T]}$ such that $\forall t\in[0,T]$,
\begin{align*}
&Y_t = y_0 + \int_0^t \sigma(Y_s) \dd X_s + K_t,\\
&Y_t\in \overline{D} ,~ |K|_T<\infty, \\
&|K|_t = \int_0^t \mathbf{1}_{\{X_s\in \partial D\}} \dd |K|_s~ \text{ and }~K_t = \int_0^t n(X_s) \dd |K|_s,
\end{align*}
where $|K|_t$ is the finite variation of $K$ on $[0,t]$ and $n(x)$ is the unit inward normal of $\partial D$ at $x$.
If $X$ is a Brownian motion and the integral is in the sense of Itô, this problem was first studied by \citet{Skorokhod}, and then by \citet{McKean}, \citet{ElKaroui}, \citet{LionsSznitman}, to name but a few. For this reason, it is called the Skorokhod problem associated to $X$, $\sigma$ and $D$ (see Definition \ref{def:SP}).

In the last few years, this problem has attracted a lot of attention when the driver $X$ is a $\beta$-Hölder continuous path: in the ``regular'' case $\beta\in(\tfrac{1}{2},1)$, existence of a solution has been established in a multidimensional setting by \citet{FerranteRovira} and uniqueness was then obtained by \citet{Slominski}. In that case, the integral can be constructed by a Riemann sum approximation and is known as a Young integral \cite{Young}. Extensions of these results to the ``irregular'' case $\beta<\tfrac{1}{2}$ can be handled with rough paths. We recall that this theory was initiated by \citet{Lyons98} and for a (multidimensional) $\beta$-Hölder continuous path $X$ and $\sigma$ a bounded vector field, it provides a way to solve the equation $\dd Y_t = \sigma(Y_t)\dd\RP_t$, where $\RP=(X,\mathbb{X})$ is the path $X$ with a supplementary two-parameters path $\mathbb{X}$ (in fact higher order correction terms such as $\mathbb{X}$ are needed if $\beta\leq \tfrac{1}{3}$, but we shall assume $\beta> \tfrac{1}{3}$ for simplicity). Solutions will be understood here as an equality between $Y_t$ and $y_0 + \int_0^t\sigma(Y_s)\dd\RP_s$ when this integral is defined in the sense of controlled rough paths \cite{Gubinelli,FrizHairer} (alternative approaches include the original definition of \citet{Lyons98}, the one of \citet{Davie} and  \citet{FrizVictoir}). Existence of solutions of reflected RDEs with $\beta\in(\tfrac{1}{3},\tfrac{1}{2})$ was proven by \citet{Aida} and \citet*{CastaingEtAl} under slightly different conditions. While \citet*{DeyaEtAl} proved uniqueness for a one-dimensional path reflected on the horizontal line. In those works, the existence is obtained through Wong-Zakai or Euler-type approximations, assuming that the boundary is either a convex or sufficiently smooth set, or a hyperplane. On the other hand in the Brownian setting, the reflected solutions have often been constructed by a penalisation procedure (see in particular \cite{LionsSznitman,ElKarouiEtAl,Slominski13}). The present work extends for the first time this classical technique to rough paths, which allows to cover the case of irregular boundaries.

We focus on one-dimensional ($e=1$) solutions to rough differential equations which are reflected on a moving boundary $L:[0,T]\rightarrow \R$, where the driver is a $d$-dimensional rough path $\RP$ with Hölder regularity $\beta\in(\tfrac{1}{3},1)$ (note that by a slight abuse of notations, we may use $\RP$ for $X$ and the vocabulary of rough paths even in the smooth case). It is proven that this problem has a unique solution, extending the result of \cite{DeyaEtAl} to a moving boundary. The existence is obtained with the following sequence of penalised RDEs:
\begin{equation}\label{eq:penalized_fSDE}
Y^n_t = y_0 + n\int_0^t (Y^n_s - L_s)_- \dd s + \int_0^t \sigma(Y^n_s) \dd\RP_s  .
\end{equation}
For technical reasons, the drift function $n(\cdot)_-$ will be replaced by a smooth function $\psi_n$ with at most linear growth, the interpretation remaining that of a stronger and stronger force pushing $Y^n$ above $L$. But unlike classical ODEs and SDEs, solving RDEs with unbounded coefficients is known to be tricky \cite{LejayRDE,LejayUnboundd,BailleulCatellier}. However, in case only the drift is unbounded (smooth and at most linearly growing) and $\sigma$ is smooth and bounded, \citet{RiedelScheutzow} proved the existence of a semiflow of solutions. Inspired by a result of \citet{FrizOberhauser}, we propose an alternative approach. 
We prove in Proposition~\ref{prop:ExistenceUnbdd} that any RDE with a drift having a bounded derivative has a unique global solution, which has a Doss-Sussmann--like representation \cite{Doss,Sussmann}. This last property turns to be extremely useful as it allows to transport the monotonicity of $(\psi_n)_{n\in\N}$ ($\psi_n\leq \psi_{n+1}$) to the penalised solution, leading to $Y^n\leq Y^{n+1}$. We are then able to prove the uniform convergence of $Y^n$ and $K^n_\cdot := \int_0^\cdot \psi_n(Y^n_s-L_s)\dd s$ to $Y$ and $K$, which are then identified as the solution to the Skorokhod problem described above. This reads (recall we assumed $e=1$):
\begin{equation}\label{eq:reflected_fSDE}
Y_t = y_0  + \int_0^t \sigma(Y_s) \dd \RP_s + K_t \quad \text{ and } \quad Y_t\geq L_t~,~\forall t\in[0,T],
\end{equation}
and the non-decreasing path $K$ increases only when $Y$ hits $L$. 
In addition, we obtain a rate of convergence in the previous result: the uniform distance between $Y^n$ and $Y$ is at most of order $n^{-\beta}$.  Up to a logarithmic factor, this result extends the optimal rate obtained in the Brownian framework by \citet{Slominski13}. Interestingly, our proof provides a new application of the rough Gr\"onwall lemma of \cite{DeyaEtAl0}.\\
Besides, when $\RP$ is a Gaussian rough path, the convergence of the sequence of penalised processes also happens uniformly in $L^\gamma(\Omega),~\gamma\ge 1$, and a rate of convergence is obtained.

The penalisation approach is a natural technique to solve reflected (ordinary, stochastic or rough) differential equations, and it also has fruitful applications to the study of the probabilistic properties of the solution. As an example, we prove that if $\sigma$ is constant, if $L\equiv 0$, and if the noise is a fractional Brownian motion with Hurst parameter $H\in[\tfrac{1}{2},1)$, then at each time $t>0$ the law of the solution $Y_t$ is absolutely continuous with respect to the Lebesgue measure on $(0,\infty)$. We expect to carry further investigations in this direction to relax the assumption on $\sigma$ and to get properties of the density.

\paragraph{Organisation of the paper.} In Section \ref{sec:prelim}, a brief overview of rough paths definitions and techniques is presented, followed by a set of precise assumptions and the statement of our main results. The existence of solutions for RDEs with unbounded drift (Proposition \ref{prop:ExistenceUnbdd}) is presented at the beginning of Section \ref{sec:penalisation} (the proof can be found in Appendix \ref{subsec:app1}), followed by the existence of a solution to the penalised equation and some penalisation estimates. The proofs that lead to the convergence of the penalised sequence to the reflected solution (Theorems \ref{th:RfSDE} and \ref{th:RfSDE_Gaussian}) are contained in the rest of Section \ref{sec:penalisation}: first it is proven that $Y^n$ and $K^n$ converge uniformly (we show monotone convergence of $Y^n$ towards a continuous limit), then that $Y$ is controlled by $X$ in the rough paths sense, which permits to use rough paths continuity theorems to show that $Y$ and $K$ solve the Skorokhod problem. In Section \ref{sec:rate}, we prove that there exists at most one solution to the one-dimensional Skorokhod problem with moving boundary (Theorem \ref{th:uniqueness}) and then we prove Theorem~\ref{th:rate} (and its probabilistic version Theorem~\ref{th:rateGaussian}), which gives a rate of convergence of the sequence of penalised paths to the reflected solution. In Section \ref{sec:density}, after recalling a few facts concerning Malliavin calculus and fractional Brownian motion, we prove that the reflected process with constant diffusion coefficient and driven by fractional noise admits a density at each time $t>0$ (Theorem~\ref{th:density}). 
Eventually, the \emph{a priori} estimates that are used in Sections~\ref{sec:penalisation} and \ref{sec:rate} are stated and proven in Section~\ref{subsec:app2}.

\paragraph{Notations.} We denote by $C$ a constant that may vary from line to line. For $k\in \N$ and $T>0$, $\Cb^k([0,T];F)$ (or simply $\Cb^k$) denotes the space of bounded functions which are $k$ times continuously differentiable with bounded derivatives, with values in some linear space $F$. If $E$ and $F$ are two Banach spaces, $\mathcal{L}(E,F)$ denotes the space of continuous linear mappings from $E$ to $F$. In the special case $E = \R^d$ and $F=\R$, we also write $(\R^d)'$ to denote the space of linear forms on $\R^d$. By a slight abuse of notations, we may consider row vectors as linear forms and vice versa. In this case, if $x\in \R^d$, the notation $x^T$ will be used for the transpose operation. The tensor product of two finite-dimensional vector spaces $E$ and $F$ is denoted by $E\otimes F$. In particular, $\R^d\otimes\R^e \simeq \R^{d\times e} \simeq \mathcal{M}^{d,e}(\R)$ is the space of real matrices of size $d\times e$.
\\
Let \(f\) be a function of one variable, and define
\begin{equation}
\delta f_{s,t} := f_t - f_s.
\end{equation}
The $2$-parameter functions are indexed by the simplex $\mathcal{S}_{[0,T]} = \{(s,t)\in[0,T]^2:~ s\leq t\}$ rather than $[0,T]^2$. If $I$ is a sub-interval of $[0,T]$, then $S_I = \{(s,t)\in I^2:~ s\leq t\}$. For $\beta\in(0,1)$ and a function $g:\mathcal{S}_{[0,T]} \rightarrow F$, the Hölder semi-norm of $g$ on a sub-interval $I\subseteq [0,T]$, denoted by $\|g\|_{\beta,I}$ (or simply $\|g\|_{\beta}$ if $I= [0,T]$), is given by
\begin{equation*}
\|g\|_{\beta,I} = \sup_{\substack{(s,t)\in \mathcal{S}_I\\s< t}} \frac{|g_{s,t}|}{(t-s)^\beta} .
\end{equation*}
The $\beta$-Hölder space $\Cbeta_2([0,T];F)$ is the space of functions $g:\mathcal{S}_{[0,T]}\rightarrow F$ such that $\|g\|_\beta<\infty$. The $\beta$-Hölder space $\Cbeta([0,T];F)$ is the space of functions $f:[0,T]\rightarrow F$ such that $\|\delta f\|_\beta<\infty$ (hereafter $\|\delta f\|_\beta$ will simply be denoted by $\|f\|_\beta$). With a slight abuse of notations, we may write $g\in \Cbeta([0,T];F)$ even for a $2$-parameter function, and if the context is clear, we may just write $g\in \Cbeta$. 

Similarly, we also remind the definitions of the \(p\)-variation semi-norm and space. For $p\geq 1$, a sub-interval $I\subseteq [0,T]$ and $g:\mathcal{S}_{[0,T]}\rightarrow F$, denote by $\|g\|_{p,I}$ (or simply $\|g\|_{p}$ if $I= [0,T]$) the semi-norm defined by
\begin{align*}
\|g\|_{p,I}^p = \sup_{\pi} \sum_{i=0}^{m-1} |g_{t_i,t_{i+1}}|^p ,
\end{align*}
where the supremum is taken over all finite  subdivisions $\pi = (t_0,\dots,t_m)$ of $I$ with $t_0 < t_1 < \cdots <t_m \in I$, $\forall m\in\N$. We define $\Vp_2$ the set of \emph{continuous} $2$-parameter paths \(g\) with finite \(p\)-variation, and $\Vp$ the set of \emph{continuous} paths $f:[0,T]\rightarrow F$ such that $\|\delta f\|_p <\infty$ (with the same abuse of notations, $\|\delta f\|_p$ will simply be denoted by $\|f\|_p$). 

Note that we shall use roman letters ($p$, $q$,...) for the variation semi-norms and greek letters ($\alpha$, $\beta$,...) for Hölder semi-norms in order not to confuse $\|\cdot\|_p$ and $\|\cdot\|_\alpha$. When $p=1$, we write $\|f\|_{1\text{-var}}$ to avoid confusion.

\begin{remark}
	The space \(\Cbeta\) (resp. \(\Vp\)) is Banach  when equipped with the norm 
	\(f \mapsto |f_0| + \|f\|_\beta.\) (resp. \(|f_0| + \|f\|_p\) ). When this property will be needed, 
	the paths will start from the same initial conditions, thus we may forget about the first term and consider $\|\cdot\|_\beta$ (resp. $\|\cdot\|_p$) as a norm.	
\end{remark}

Lastly, the mapping $\phi_p(x) = x\vee x^p,~x\geq 0$ will frequently appear in upper bounds of control functions that are used to control the $p$-variations of penalised and reflected solutions.

\section{Presentation of the Skorokhod problem and main results}\label{sec:prelim}

\subsection{Notations and definitions on rough paths and controlled rough paths}

In this section, we briefly review the definitions and notations of rough paths and rough differential equations, gathered mostly from \citet{FrizVictoir} and \citet{FrizHairer}. We also make precise the meaning of the Skorokhod problem written in Equation \eqref{eq:reflected_fSDE}.

For $\beta\in(\tfrac{1}{3},\tfrac{1}{2})$ (resp. $p\in(2,3)$), we denote by $\RP = \left((X_t)_{t\in[0,T]},(\mathbb{X}_{s,t})_{(s, t)\in\mathcal{S}_{[0,T]}}\right) \in \Cbeta([0,T];\R^d) \times \mathcal{C}^{2\beta}([0,T];\R^d\otimes \R^d)$ (resp. in $\Vp([0,T];\R^d) \times \mathcal{V}^{\frac{p}{2}}([0,T];\R^d\otimes \R^d)$) a rough paths, and denote by $\Ccalbeta_g([0,T];\R^d)$, or simply $\Ccalbeta_g$ (resp. $\Vcalp_g([0,T];\R^d)$ and $\Vcalp_g$) the space of geometric $\beta$-Hölder rough paths (resp. $p$-rough paths) with the following homogeneous rough path ``norm''
	\begin{align*}
	\vertiii{\RP}_{\beta} = \|X\|_\beta + \sqrt{\|\mathbb{X}\|_{2\beta}} ~~~(\text{resp. } \vertiii{\RP}_{p}^p = \|X\|_p^p + \|\mathbb{X}\|_{\frac{p}{2}}^{\frac{p}{2}}).
	\end{align*}
Hereafter, we use the notation $\RP\in \Ccalbeta_g$ even if $\beta>\tfrac{1}{2}$, although the iterated integral $\mathbb{X}$ is irrelevant in this case. This notation permits to present our results in a unified form. Our main results are expressed in Hölder spaces, but the $p$-variations play an important role in the proofs, due to the nature of the compensator process $K$ (which is non-decreasing and thus in $\mathcal{V}^1$).

For a geometric rough path $\RP\in \Ccalbeta_g\left([0,T],\R^d\right)$, one would like to give a meaning to the following equation:
\begin{align}\label{eq:RDEdrift}
\dd Y_t = b(Y_t)\dd t + \sigma(Y_t) \dd \RP_t . 
\end{align}
We choose here to formulate the problem in the framework of controlled rough paths. We recall it here for the notations (see \cite[Definition 4.6]{FrizHairer}).

\begin{definition}[Controlled rough path]\label{def:controlledRP}
	Let  $X\in \Cbeta([0,T];\R^d)$ (resp. $X\in \Vp([0,T];\R^d)$). A path $Y\in \Cbeta([0,T];E)$ (resp. $Y\in \Vp([0,T];E)$) is \emph{controlled} by $X$ if there exist a path $Y'\in \Cbeta([0,T];\mathcal{L}(\R^d,E))$ (resp. $Y'\in \Vp([0,T];\mathcal{L}(\R^d,E))$) and a map $R^Y \in \mathcal{C}^{2\beta}_2([0,T];E)$ (resp. $R^Y \in \mathcal{V}^{\frac{p}{2}}_2([0,T];E)$) such that
	\begin{align*}
	\forall (s, t)\in \mathcal{S}_{[0,T]},\quad \delta Y_{s,t} = Y'_s \delta X_{s,t} + R^Y_{s,t}.
	\end{align*}
	The path $Y'$ is called the Gubinelli derivative of \(Y\) (although it might not be unique), and $R^Y$ is a remainder term. The space of such couples of paths $(Y,Y')$ controlled by $X$ is denoted by $\Cbeta_X(E)$ (resp. $\Vp_X(E)$).
\end{definition}

Now if $\RP$ is a geometric rough path, the rough integral of $Y$ against $\RP$ is classically defined by 
\begin{align}\label{def:RoughInt}
\int_0^T Y_s \dd\RP_s = \lim_{m \to \infty} \sum_{\pi_m = (t_i^m)} Y_{t_i^m} \delta X_{t_i^m,t_{i+1}^m} + Y'_{t_i^m} \mathbb{X}_{t_i^m,t_{i+1}^m} ,
\end{align}
where $(\pi_m)_{m\in{\N}}$ is an increasing sequence of subdivisions of $[0,T]$ such that  $\displaystyle\lim_{m\rightarrow \infty}\max_i(t_{i+1}^m-t_i^m) =0$ and $t_0^m=0,~t_m^m=T$. The existence of this integral has been established by \citet{Gubinelli} for the Hölder topology (see also \cite[Proposition 4.10]{FrizHairer}). For technical reasons related to the nature of the compensator term $K$, which is clearly in $\mathcal{V}^1$ but not so clearly H\"older continuous, it will be convenient (see Lemma~\ref{lem:RDeltaSig} and its proof) to use a similar result in $p$-variation topology.
\begin{theorem}[\citet{FrizShekhar}, Theorem 31]\label{th:RI}
	Let $p\in[2,3)$. If $\RP\in \Vcalp_g([0,T];\R^d)$ and $(Y,Y')\in \Vp_X(\mathcal{L}(\R^d,\R^e))$, then the rough integral of $Y$ against $\RP$ exists (and the limit in \eqref{def:RoughInt} does not depend on the choice of a sequence of subdivisions). Moreover, for any $(s, t)\in \mathcal{S}_{[0,T]}$,
	\begin{align}\label{eq:boundRI}
	\left|\int_s^t Y_u \dd \RP_u -Y_s \delta X_{s,t} - Y_s' \mathbb{X}_{s,t}\right| \leq C_p \left( \|X\|_{\pv,[s,t]} \|R^{Y}\|_{\ppv,[s,t]} + \|\mathbb{X}\|_{\ppv,[s,t]} \|Y'\|_{\pv,[s,t]}\right).
	\end{align}
\end{theorem}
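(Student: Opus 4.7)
The plan is to apply the sewing lemma in $p$-variation topology to the natural ``germ''
\[
\Xi_{s,t} := Y_s\, \delta X_{s,t} + Y'_s\, \mathbb{X}_{s,t}, \qquad (s,t)\in\mathcal{S}_{[0,T]},
\]
and show that the Riemann sums $\sum_i \Xi_{t_i^m, t_{i+1}^m}$ converge, along \emph{any} sequence of subdivisions with vanishing mesh, to an additive function which differs from $\Xi_{s,t}$ by the bound claimed in \eqref{eq:boundRI}.

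The first step is to compute the coboundary
$\delta\Xi_{s,u,t} := \Xi_{s,t} - \Xi_{s,u} - \Xi_{u,t}$ for $s\le u\le t$. Using Chen's relation for a geometric rough path,
$\mathbb{X}_{s,t} = \mathbb{X}_{s,u} + \mathbb{X}_{u,t} + \delta X_{s,u}\otimes \delta X_{u,t}$, together with the controlled expansion $\delta Y_{s,u} = Y'_s\,\delta X_{s,u} + R^Y_{s,u}$, a direct expansion yields
\[
\delta \Xi_{s,u,t} \;=\; -\,R^Y_{s,u}\,\delta X_{u,t} \;-\; \delta Y'_{s,u}\,\mathbb{X}_{u,t}.
\]
Hence, introducing the superadditive control
\[
\omega(s,t) \;:=\; \|X\|_{p,[s,t]}^{p} + \|\mathbb{X}\|_{p/2,[s,t]}^{p/2} + \|R^Y\|_{p/2,[s,t]}^{p/2} + \|Y'\|_{p,[s,t]}^{p},
\]
we get $|\delta\Xi_{s,u,t}| \le 2\,\omega(s,t)^{3/p}$, and the crucial point is that $3/p > 1$ since $p<3$.

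The second step is to feed this estimate into the $p$-variation sewing lemma (Feyel--de La Pradelle / Friz--Zhang). Since $\omega$ is a continuous superadditive control and the defect of $\Xi$ is controlled by $\omega(s,t)^{3/p}$ with exponent strictly above $1$, the lemma produces a unique additive map $\mathcal{I}_{s,t}$, which we define as $\int_s^t Y_u\,\mathrm{d}\RP_u$, satisfying
\[
\bigl|\mathcal{I}_{s,t} - \Xi_{s,t}\bigr| \;\le\; C_p\, \omega(s,t)^{3/p},
\]
and, moreover, $\mathcal{I}_{s,t} = \lim_m \sum_i \Xi_{t_i^m, t_{i+1}^m}$ along any sequence of subdivisions of $[s,t]$ whose mesh tends to $0$. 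To recover the product form stated in \eqref{eq:boundRI}, I split $\omega(s,t)^{3/p}$ by Young's inequality: writing $3/p = 1/p + 2/p$ and pairing $\|R^Y\|_{p/2}$ with $\|X\|_p$ on one side, $\|Y'\|_p$ with $\|\mathbb{X}\|_{p/2}$ on the other, the mixed terms are absorbed into the two ``diagonal'' products appearing in the statement.

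The main obstacle is the $p$-variation sewing step itself: in contrast with the Hölder setting, one cannot simply bound the defect by $(t-s)^\theta$ with $\theta>1$, and one must work with an abstract control $\omega$ and prove convergence of Riemann sums along arbitrary (not only dyadic) partitions. This forces one to use the continuity and superadditivity of $\omega$ to show that refining a partition reduces the error uniformly; the careful bookkeeping of constants so that the bound appears in the symmetric product form of \eqref{eq:boundRI} (rather than a less informative $\omega^{3/p}$) is also non-trivial and relies on the Young-type splitting just described.
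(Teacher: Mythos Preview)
The paper does not give its own proof of this theorem; it is quoted verbatim from \citet{FrizShekhar} and used as a black box. So there is no in-paper argument to compare against, and your outline --- compute $\delta\Xi_{s,u,t}=-R^Y_{s,u}\,\delta X_{u,t}-\delta Y'_{s,u}\,\mathbb{X}_{u,t}$ and feed it into a $p$-variation sewing lemma --- is exactly the route taken in the cited reference.

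There is, however, a genuine gap in your last paragraph. Once the sewing lemma yields $|\mathcal{I}_{s,t}-\Xi_{s,t}|\le C_p\,\omega(s,t)^{3/p}$ with your four-term control $\omega$, you \emph{cannot} recover the product form \eqref{eq:boundRI} by Young's inequality. Expanding $\omega^{3/p}$ produces terms like $\|X\|_{p,[s,t]}^{3}$ or $\|R^Y\|_{p/2,[s,t]}^{3}$, which are not dominated by $\|X\|_{p,[s,t]}\|R^Y\|_{p/2,[s,t]}+\|\mathbb{X}\|_{p/2,[s,t]}\|Y'\|_{p,[s,t]}$ in general; Young's inequality goes the wrong way here (it turns products into sums, not sums into products).

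The standard fix is to keep the product structure \emph{before} sewing. On a fixed interval $[s,t]$ (assuming the norms are nonzero, otherwise the bound is trivial) work with the normalised control
\[
\widetilde\omega(a,b)=\frac{\|X\|_{p,[a,b]}^{p}}{\|X\|_{p,[s,t]}^{p}}+\frac{\|R^Y\|_{p/2,[a,b]}^{p/2}}{\|R^Y\|_{p/2,[s,t]}^{p/2}}+\frac{\|Y'\|_{p,[a,b]}^{p}}{\|Y'\|_{p,[s,t]}^{p}}+\frac{\|\mathbb{X}\|_{p/2,[a,b]}^{p/2}}{\|\mathbb{X}\|_{p/2,[s,t]}^{p/2}},
\]
for which $\widetilde\omega(s,t)\le 4$ and
\[
|\delta\Xi_{a,c,b}|\le \Bigl(\|X\|_{p,[s,t]}\|R^Y\|_{p/2,[s,t]}+\|\mathbb{X}\|_{p/2,[s,t]}\|Y'\|_{p,[s,t]}\Bigr)\,\widetilde\omega(a,b)^{3/p}.
\]
Sewing then gives \eqref{eq:boundRI} directly with $C_p$ depending only on $p$. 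Equivalently, one can invoke the version of the sewing lemma that handles germs bounded by $\omega_1^{\alpha}\omega_2^{\beta}$ with $\alpha+\beta>1$ and two controls $\omega_1,\omega_2$; this is how the product bound is obtained in \cite{FrizShekhar}.
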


Lastly, recall the following definition, borrowed from \cite[Def. 1.6]{FrizVictoir}.
\begin{definition}[Control function]
	Let $I$ be an interval and recall that $\mathcal{S}_I$ denotes the simplex on $I$. 
	A control function is a map $w:\mathcal{S}_I\rightarrow \R_+$ which is 
	\begin{itemize}
	\item[-] super-additive, 
	i.e. $w(s,t)+w(t,u)\leq w(s,u)$ for all $s\leq t\leq u \in I$, 
	\item[-] continuous on \(\mathcal{S}_I\),
	\item[-] zero on the diagonal, i.e. $ w(s,s)=0$.
	\end{itemize}
\end{definition}
\noindent For instance, if $X\in\Vp(I)$ for some interval $I\subseteq [0,T]$, then $w_X(s,t) = \|X\|_{p,[s,t]}^p$ is a control function on $\mathcal{S}_I$.

\subsection{The Skorokhod problem}

Having at our disposal a rough integral in the sense of Equation \eqref{def:RoughInt}, we can give a meaning to Equation \eqref{eq:reflected_fSDE}, also referred to as Skorokhod problem associated to $\sigma$ and $L$, denoted by $SP(\sigma,\RP,L)$.
\begin{definition}\label{def:SP}
	Let $\RP\in \Ccalbeta_g([0,T];\R^d)$. We say that $(Y,K)$ solves $SP(\sigma,\RP,L)$, or that it is a solution to the reflected RDE with diffusion coefficient $\sigma$ started from $y_0\geq L_0$ and reflected on the path $L$, if 
	\begin{enumerate}[label = (\roman*)]
		\item\label{item1} $(Y,\sigma(Y))\in \Vp_X$ and $(Y,K)$ satisfies Equation \eqref{eq:reflected_fSDE}, in the sense that both sides are equal, where the integral $\int_0^\cdot \sigma(Y_s) \dd\RP_s$ is understood in the sense of \eqref{def:RoughInt}; \label{def:SP1}
		\item $\forall t\in[0,T],~Y_t\geq L_t$; \label{def:SP2}
		\item $K$ is non-decreasing; \label{def:SP3}
		\item $\forall t\in[0,T]$, $\int_0^t (Y_s - L_s)  \dd K_s = 0$, or equivalently, $\int_0^t \mathbf{1}_{\{Y_s\neq L_s\}} \dd K_s = 0$ . \label{def:SP4}
	\end{enumerate}
\end{definition}

\begin{remark}
	In item \ref{item1}, it is also possible to define solutions to reflected RDEs in the sense of Davie as in \citet{Aida} and \citet{DeyaEtAl}. 
\end{remark}

\subsection{Main results}

As an extension to a moving boundary of the theorem of \cite{DeyaEtAl}, we have the following result:
\begin{theorem}\label{th:uniqueness}
Let $\RP = (X,\mathbb{X})\in \Ccalbeta_g([0,T];\R^d)$ for some $\beta\in(\tfrac{1}{3},1)\setminus\{\tfrac{1}{2}\}$, let $L\in \mathcal{C}^\alpha([0,T];\R)$ with $\alpha>\tfrac{1}{2}\vee (1-\beta)$, and assume that $\sigma \in \mathcal{C}_b^3(\R,(\R^d)')$. Then there is at most one solution to the problem $SP(\sigma,\RP,L)$ with initial condition $y_0\geq L_0$.
\end{theorem}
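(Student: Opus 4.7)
The plan is to adapt the rough Gronwall strategy of \cite{DeyaEtAl} to the moving boundary setting. Let $(Y^1,K^1)$ and $(Y^2,K^2)$ be two solutions of $SP(\sigma,\RP,L)$ starting from the common value $y_0\ge L_0$, and set $Z := Y^1 - Y^2$, so that $Z_0 = 0$. Both $Y^i$ are controlled by $X$ in the sense of Definition \ref{def:controlledRP}, with Gubinelli derivatives $\sigma(Y^i)$; their difference is then controlled by $X$ with derivative $\sigma(Y^1)-\sigma(Y^2)$ and remainder $R^{Y^1}-R^{Y^2}$, which absorbs the finite-variation contribution $K^1-K^2$. Since $\RP$ is geometric, the rough It\^o formula applied to $Z^2$, together with $\dd Z = (\sigma(Y^1)-\sigma(Y^2))\dd\RP + \dd(K^1-K^2)$, yields
\begin{equation*}
\tfrac{1}{2}Z_t^2 \;=\; \int_0^t Z_s\,(\sigma(Y^1_s)-\sigma(Y^2_s))\dd\RP_s \;+\; \int_0^t Z_s\dd(K^1_s-K^2_s),
\end{equation*}
where the first integral is rough in the sense of Theorem \ref{th:RI} and the second is a Riemann--Stieltjes integral against a finite-variation path.

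By item \ref{def:SP4} of Definition \ref{def:SP}, $\dd K^i$ is supported on $\{Y^i=L\}$, while $Y^j\ge L$ for $j\neq i$; hence $\int_0^t Z_s\dd K^1_s = \int_0^t(L_s-Y^2_s)\dd K^1_s \le 0$ and $-\int_0^t Z_s\dd K^2_s = \int_0^t(L_s-Y^1_s)\dd K^2_s \le 0$, so the Skorokhod monotonicity reduces the estimate to
\begin{equation*}
Z_t^2 \;\le\; 2\int_0^t Z_s\,(\sigma(Y^1_s)-\sigma(Y^2_s))\dd\RP_s \qquad\text{for all } t\in[0,T].
\end{equation*}

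The remaining step is to estimate this rough integral by applying \eqref{eq:boundRI} to the integrand $H := Z(\sigma(Y^1)-\sigma(Y^2))$, which is controlled by $X$ with derivative $H' = (\sigma(Y^1)-\sigma(Y^2))^2 + Z\bigl(\sigma'(Y^1)\sigma(Y^1)-\sigma'(Y^2)\sigma(Y^2)\bigr)$ and a remainder $R^H$ constructed from $R^Z$, $R^{Y^i}$ and a second-order Taylor expansion of $\sigma$. Using $|\sigma(Y^1)-\sigma(Y^2)|\le \|\sigma\|_{\lip}|Z|$ one checks that $|H_s|$ and $|H'_s|$ are pointwise bounded by a constant multiple of $(\sup_{[0,t]}|Z|)^2$, while the $\ppv$-variation of $R^H$ on $[s,t]$ is bounded by $C(\sup_{[0,t]}|Z|)^2\, w(s,t)$ for some control $w$ depending only on the a priori $p$-variation bounds on $Y^i$, $\sigma(Y^i)$ and $R^{Y^i}$ derived in Section \ref{subsec:app2}. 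Plugged into \eqref{eq:boundRI} and summed along a partition of $[0,T]$, this yields an estimate of the form
\begin{equation*}
\sup_{u\in[0,t]} Z_u^2 \;\le\; C\int_0^t \sup_{v\in[0,u]}Z_v^2 \dd\mu(u)
\end{equation*}
for a finite Borel measure $\mu$ on $[0,T]$, and the rough Gronwall lemma of \cite{DeyaEtAl0} then forces $Z\equiv 0$; uniqueness of $K$ follows from Equation \eqref{eq:reflected_fSDE}.

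The main obstacle lies in the controlled rough path bookkeeping for $H$: its Gubinelli derivative and remainder must be decomposed in a form that makes manifest that every contribution is dominated by $\sup|Z|^2$ times a supercritical control, so that the Gronwall mechanism closes. Compared with the horizontal boundary case of \cite{DeyaEtAl}, the novelty is that $L$ is only $\alpha$-H\"older rather than smooth, and the regularity threshold $\alpha>\tfrac{1}{2}\vee(1-\beta)$ is precisely what ensures that the additional $\alpha$-H\"older contributions entering $R^{Y^i}$ (via the $K^i$ increments, which are pinned to those of $L$ on $\{Y^i=L\}$) remain supercritical and can be absorbed in the rough Gronwall step.
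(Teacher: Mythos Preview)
Your approach via the rough It\^o formula for $Z^2$ and the Skorokhod monotonicity $\int Z\,\dd(K^1-K^2)\le 0$ is natural, but the argument has a genuine gap at the remainder estimate. You claim that $\|R^H\|_{\ppv,[s,t]}\le C(\sup_{[s,t]}|Z|)^2\,w(s,t)$ for some control $w$, and this is not true. Writing $H=Z\,\Delta\sigma(Y)$, one computes
\[
R^H_{s,t}=Z_s\,R^{\Delta\sigma(Y)}_{s,t}+R^Z_{s,t}\,\Delta\sigma(Y)_s+\delta Z_{s,t}\,\delta(\Delta\sigma(Y))_{s,t},
\]
and both $R^Z$ and $R^{\Delta\sigma(Y)}$ contain the finite-variation contribution $\delta(K^1-K^2)_{s,t}$, bounded only by $w_K(s,t):=\delta K^1_{s,t}+\delta K^2_{s,t}$ (this is exactly what Lemma~\ref{lem:RDeltaSig}\ref{item:ii} gives: $\|R^{\Delta\sigma(Y)}\|_{\ppv}\le C\{w_K+(w_K+\|\Delta Y\|_\infty)\phi_p(\cdot)\}$). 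Hence $\|R^H\|_{\ppv}$ is at best $C\big[(\sup|Z|)^2\cdot(\text{control})+\sup|Z|\cdot w_K\cdot(\text{control})\big]$, and the same issue arises in $\|H'\|_p$. The cross term $\sup|Z|\cdot w_K$ is \emph{linear} in $\sup|Z|$, not quadratic, so the resulting inequality for $\sup Z^2$ does not close under the rough Gr\"onwall lemma: one is left with a non-vanishing $w_K$-term on the right-hand side. In short, the Skorokhod monotonicity kills the $K$-contribution in the Stieltjes integral $\int Z\,\dd(K^1-K^2)$, but the $K$-increments resurface in the rough remainder of $\int H\,\dd\RP$ through the $p$-variation structure of $Z$, and there they carry only one factor of $|Z|$.

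The paper avoids this precisely by \emph{not} squaring. It assumes without loss of generality that $Y^1\ge Y^2$ on a small interval and applies the rough Gr\"onwall lemma directly to $\Delta Y+w_K$, obtaining $\|\Delta Y\|_\infty+w_K\le Ce^{(\cdot)}(\Delta Y_s+2\,\delta K^1_{s,t})$. The crucial observation is then pathwise: on any sub-interval where $\Delta Y>0$ one has $Y^1>Y^2\ge L$, hence $\delta K^1=0$, and the Gr\"onwall output collapses to zero. So the $K^1$-term is not killed by an energy inequality but by the Skorokhod condition \ref{def:SP4} applied \emph{after} the Gr\"onwall step. Incidentally, the threshold $\alpha>\tfrac12\vee(1-\beta)$ plays no role in this uniqueness argument (the $K^i$ enter only as abstract non-decreasing paths); it is needed for the existence part, to form the Young pairing of $\RP$ with $L$.
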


In particular, this theorem is crucial to ensure that the solution we construct does not depend on a choice of penalisation.  

Since the penalisation terms $n (\cdot)_-$ in \eqref{eq:penalized_fSDE} are not differentiable, we replace them by  smooth functions $\psi_n$  such that
\begin{equation}\label{eq:hyp_psin}
\forall n\in\N,\quad 
\begin{cases}
\psi_n\in\mathcal{C}^\infty, ~\psi_n' \in \Cb^\infty ~\text{ and } ~ \psi_n'\leq 0;\\
\forall y\in\R,\quad (-\tfrac{1}{2} + ny_-)\vee 0 \leq \psi_n(y) \leq ny_-;\\
\psi_n \leq \psi_{n+1}.
\end{cases}
\end{equation}
With these notations and assumptions, one can consider the penalised paths defined by
\begin{equation}\label{eq:penalized_smoothed_fSDE}
Y^n_t = y_0  + \int_0^t \psi_n(Y^n_s - L_s) \dd s + \int_0^t \sigma(Y^n_s) \dd \RP_s , \quad t\in[0,T].
\end{equation}
For each $n\in\N$, Proposition~\ref{prop:globalExistence} will ensure that there is a unique solution to \eqref{eq:penalized_smoothed_fSDE},  understood in the sense that the left-hand side of \eqref{eq:penalized_smoothed_fSDE} equals its right-hand side with the last integral defined as in \eqref{def:RoughInt}. This is new since $\psi_n$ is unbounded.

\begin{theorem}\label{th:RfSDE}
	Let $\RP = (X,\mathbb{X})\in \Ccalbeta_g([0,T];\R^d)$ for some $\beta\in(\tfrac{1}{3},1)\setminus\{\tfrac{1}{2}\}$ and 
	let $L\in \mathcal{C}^\alpha([0,T];\R)$ with $\alpha>\tfrac{1}{2}\vee (1-\beta)$. Assume that $\sigma \in \mathcal{C}_b^4(\R,(\R^d)')$, 
	that $(\psi_n)_{n\in\N}$ satisfies \eqref{eq:hyp_psin}, 
	and that $y_0\geq L_0$. For each $n\in \N$, $Y^n$ denotes the solution to \eqref{eq:penalized_smoothed_fSDE}.
	
Then $\left((Y^n_t , \int_0^t \psi_n(Y^n_s-L_s)\dd s)_{t\in[0,T]}\right)_{n\in\N}$ converges uniformly on $[0,T]$ to $(Y,K)$, the solution to the Skorokhod problem $SP(\sigma, \RP, L)$, and $Y \in \Cbeta$.
\end{theorem}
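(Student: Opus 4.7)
The approach is three-fold: prove the monotonicity $Y^n \leq Y^{n+1}$ via the Doss--Sussmann representation, establish uniform a priori bounds on $Y^n$ and on $K^n := \int_0^\cdot \psi_n(Y^n_s - L_s)\dd s$, and then pass to the limit and identify $(Y,K)$ as a solution to $SP(\sigma,\RP,L)$. For the first step I would use Proposition~\ref{prop:ExistenceUnbdd} to write $Y^n_t = U^{\RP;Z^n_t}_{t\leftarrow 0}$, where $U^{\RP;\cdot}_{t\leftarrow 0}$ is the flow at time $t$ of the drift-free RDE driven by $\RP$ with vector field $\sigma$, and $Z^n$ solves a classical ODE whose right-hand side is monotone in the drift coefficient $\psi_n$. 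In one dimension the flow is strictly increasing in its initial condition since its Jacobian is strictly positive, so the scalar comparison theorem for ODEs applied to $Z^n$, together with the increasing flow, yields $Y^n \leq Y^{n+1}$ pointwise.

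For the second step, using the a priori estimates gathered in Section~\ref{subsec:app2}, I would show that $\sup_n \|Y^n\|_\beta$, $\sup_n \|\Rn\|_{2\beta,[0,T]}$ and $\sup_n \|K^n\|_{1\text{-var}}$ are all finite. Combined with monotonicity, the uniform Hölder bound forces the pointwise limit $Y := \lim_n Y^n$ to lie in $\Cbeta$ (hence to be continuous), and Dini's theorem upgrades the monotone pointwise convergence $Y^n \uparrow Y$ to uniform convergence on $[0,T]$. Interpolating uniform convergence against the uniform $\beta$-Hölder bound gives $Y^n \to Y$ in $\mathcal{C}^{\beta'}$ for every $\beta' < \beta$, and the same interpolation on the remainders $\Rn$ shows that $(Y^n,\sigma(Y^n))$ converges to $(Y,\sigma(Y))$ in the controlled-rough-path topology. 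Applying Theorem~\ref{th:RI} to the differences then yields uniform convergence of $\int_0^\cdot \sigma(Y^n_s)\dd\RP_s$ to $\int_0^\cdot \sigma(Y_s)\dd\RP_s$, so $K^n$ converges uniformly to some continuous, non-decreasing $K$ with $K_0=0$, and $(Y,K)$ satisfies Equation~\eqref{eq:reflected_fSDE}.

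It remains to verify the Skorokhod conditions~\ref{def:SP2} and~\ref{def:SP4}. If $Y_{t_0} < L_{t_0}$ at some point, continuity produces an interval $I$ and $\delta>0$ on which $(Y-L)_- \geq \delta$; since $Y^n \leq Y$, the lower bound in~\eqref{eq:hyp_psin} gives $\psi_n(Y^n_s - L_s) \geq n\delta - \tfrac{1}{2}$ on $I$ for all large $n$, contradicting the uniform bound on $\|K^n\|_{1\text{-var}}$. For the complementarity condition, the upper bound in~\eqref{eq:hyp_psin} forces $\psi_n$ to vanish on $\{y \geq 0\}$, so on any interval where $Y > L$ strictly, uniform convergence ensures $Y^n > L$ for large $n$, $K^n$ is constant there, and $K$ assigns no mass to $\{Y > L\}$. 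The main obstacle is undoubtedly the uniform $1$-variation bound on $K^n$: the coefficient $\psi_n$ blows up with $n$ and a naive rough-Gronwall estimate produces constants diverging in $n$, so one must exploit the self-regulating nature of the penalisation (monotonicity of $K^n$, vanishing of $\psi_n$ above the boundary) through a careful rough-Gronwall argument in the spirit of~\cite{DeyaEtAl0} run on a suitable $p$-variation control of the form $\phi_p$.
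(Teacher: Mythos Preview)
Your outline matches the paper's skeleton: monotonicity via Doss--Sussmann, uniform bounds, Dini, convergence of the rough integral, then verification of the Skorokhod conditions. Two of your intermediate steps, however, diverge from the paper and contain gaps.

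First, the uniform bound on $K^n$. You correctly flag this as the crux, but the paper does \emph{not} obtain it by a rough Gr\"onwall argument; that lemma appears in the paper only for uniqueness and for the rate of convergence in Section~\ref{sec:rate}, where one already has the reflected solution to compare against. Instead the Doss--Sussmann representation is used a second time: since $\dot Z^n = J^{\RP;Z^n}_{0\leftarrow \cdot}\psi_n(Y^n-L)$ and $J^{\RP;z}_{t\leftarrow 0}\leq C^\RP_J$, one reads off $\delta K^n_{s,t}\leq C^\RP_J\,\delta Z^n_{s,t}$, and then an elementary deterministic comparison (Lemma~\ref{lem:deterministic} and Lemma~\ref{lem:comparison}) bounds $Z^n_T$ uniformly in $n$. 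This is a pure ODE step.

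Second, the paper never establishes $\sup_n\|Y^n\|_\beta<\infty$ or $\sup_n\|R^{Y^n}\|_{2\beta}<\infty$, and the estimates of Section~\ref{subsec:app2} do not give this: they are in $p$-variation, with controls $\kappab_{\RP,K^n}(s,t)=\vertiii{\RP}_{p,[s,t]}^p+(\delta K^n_{s,t})^p$ that depend on $K^n$ and exploit the identity $\|K^n\|_{p,[s,t]}=\delta K^n_{s,t}$ for monotone paths (see Remark~\ref{rem:pvar}). Once $K^n_T$ is bounded as above, Lemma~\ref{lem:unifBoundRYn} yields uniform $p$-variation bounds on $\sigma'(Y^n)\sigma(Y^n)$ and $R^{\sigma(Y^n)}$, and only then does Corollary~\ref{cor:unifHolder} extract a uniform $\beta$-H\"older bound on $\int_0^\cdot\sigma(Y^n)\,d\RP$. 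The H\"older regularity of $Y$ is then obtained not from a uniform H\"older bound on $Y^n$ (indeed $K^n$ has Lipschitz constant of order $n^{1-\beta}$, so such a bound is not obvious), but from the explicit increment estimate Lemma~\ref{lem:comparison}\ref{item:comparisonii}, which carries the extra term $(Y^n_s-L_s)_-$; Proposition~\ref{prop:conv_neg_part} shows this term is $O(n^{-\beta})$, and passing to the limit gives $Y\in\Cbeta$, after which Dini applies. Your verifications of \ref{def:SP2} and \ref{def:SP4} are valid alternatives to the paper's (which uses Proposition~\ref{prop:conv_neg_part} directly for \ref{def:SP2} and weak convergence of $dK^n$ against the uniformly convergent $Y^n-L$ for \ref{def:SP4}).
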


~

Furthermore, we obtain a rate of convergence of the sequence of penalised processes to the reflected solution.
\begin{theorem}\label{th:rate}
	Assume that the hypotheses of Theorem~\ref{th:RfSDE} hold. 
 Then the penalised solution $Y^n$ converges to $Y$ with the following rate, for some $C>0$:
	\begin{align*}
	\forall n\in\N^*,~\forall t\in[0,T],\quad 0\leq Y_t - Y^n_t \leq C~n^{-\beta} .
	\end{align*}
\end{theorem}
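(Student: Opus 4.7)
The lower bound $Y_t - Y^n_t \geq 0$ is already established in the proof of Theorem~\ref{th:RfSDE} via the Doss--Sussmann representation and the monotonicity $\psi_n \leq \psi_{n+1}$, so only the upper bound requires proof. The plan is to apply the rough Gr\"onwall lemma of \cite{DeyaEtAl0} to the non-negative path $Z^n_t := Y_t - Y^n_t$, which by subtracting \eqref{eq:penalized_smoothed_fSDE} from \eqref{eq:reflected_fSDE} satisfies
\begin{equation*}
\delta Z^n_{s,t} = (K_t - K_s) - \int_s^t \psi_n(Y^n_u - L_u)\,du + \int_s^t (\sigma(Y_u) - \sigma(Y^n_u))\,d\RP_u.
\end{equation*}
The rough-integral contribution is handled by standard controlled rough path machinery: since $(Y, \sigma(Y))$ and $(Y^n, \sigma(Y^n))$ both belong to $\Vp_X$ with Gubinelli derivatives controlled uniformly in $n$ (thanks to the \emph{a priori} estimates of Section~\ref{subsec:app2}), Theorem~\ref{th:RI} combined with a first-order expansion of $\sigma$ gives a bound of the form $C\,\|Z^n\|_{\infty,[s,t]}\,\omega(s,t)^{\theta}$ for a control $\omega$ and some $\theta > 1/p$.

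The key preliminary estimate is the uniform bound
\begin{equation*}
\|(Y^n - L)_-\|_{\infty,[0,T]} \leq C\, n^{-\beta},
\end{equation*}
which I would prove by excursion analysis. Let $t_0$ be a time at which $(Y^n - L)_-$ attains its maximum $M > 0$, and let $s_0 = \sup\{s \leq t_0 : Y^n_s \geq L_s\}$. On $[s_0, t_0]$ we have $Y^n < L$, hence $\psi_n(Y^n - L) \geq 0$; writing the penalised equation for $Y^n - L$, the descent $-M$ from $0$ to $-M$ must come from the noise and boundary contributions, which are of order $(t_0-s_0)^{\alpha\wedge\beta}$. A complementary argument using the forward interval $[t_0, t_1]$, on which the penalisation drift of magnitude $\gtrsim nM$ dominates, forces $t_1 - t_0 \leq C/n$ and hence $M \leq C(t_1-t_0)^\beta \leq C n^{-\beta}$.

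Combining these ingredients in the rough Gr\"onwall scheme is the main obstacle. The drift difference $\delta K_{s,t} - \int_s^t \psi_n\,du$ is a priori a difference of two non-negative quantities each of size $(t-s)^\beta$ (using $Y, Y^n \in \Cbeta$ uniformly in $n$), so the $n^{-\beta}$ rate cannot come from a naive bound. The resolution uses the preceding auxiliary estimate together with the fact that the measure $dK$ is supported on $\{Y = L\}$, on which $Z^n = (Y^n - L)_-$: this gives $\int_s^t Z^n_u\,dK_u \leq C\,n^{-\beta} (K_t - K_s)$, while $\int_s^t Z^n_u\,\psi_n(Y^n_u - L_u)\,du \geq 0$ since $Z^n \geq 0$ and $\psi_n \geq 0$ wherever it is non-zero. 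One then applies a rough It\^o-type chain rule (available because $Z^n$ is controlled by $X$) to a suitable convex energy functional of $Z^n$, extracting from these sign considerations a rough Gr\"onwall inequality of the form
\begin{equation*}
\sup_{u \in [s,t]} \Phi(Z^n_u) \leq C\, n^{-\beta}\, \omega(s,t) + C \int_s^t \Phi(Z^n_u)\,d\omega(u),
\end{equation*}
valid on intervals where $\omega$ is small.

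The conclusion then follows by iterating this Gr\"onwall inequality over a partition of $[0,T]$ whose size depends only on $\vertiii{\RP}_\beta$ and $\|L\|_\alpha$ (and not on $n$), yielding $\sup_{t \in [0,T]} Z^n_t \leq C\, n^{-\beta}$. The most delicate technical point is the choice of the energy functional $\Phi$: a plain quadratic $\Phi(z) = z^2$ yields only the suboptimal rate $n^{-\beta/2}$, so one must either iterate the argument, or use the linear structure of $Z^n$ more carefully via a linearisation of $\sigma(Y) - \sigma(Y^n)$ to preserve the full order $n^{-\beta}$ given by the reflection-support estimate.
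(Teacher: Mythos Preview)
Your proposal contains the right preliminary ingredient --- the bound $\|(Y^n-L)_-\|_{\infty}\le C n^{-\beta}$ --- and the correct observation that on the support of $dK$ one has $Z^n = (Y^n-L)_-$. However, the core of your argument has a genuine gap that you yourself flag: the energy-functional route with $\Phi(z)=z^2$ loses a square root and yields only $n^{-\beta/2}$, and your suggestions to ``iterate'' or ``use the linear structure more carefully'' are not concrete plans. There is no rough It\^o formula in the paper's toolbox that would make this work, and I do not see how iteration of the quadratic bound recovers the full exponent.

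The paper avoids the energy method entirely. It applies the rough Gr\"onwall lemma \emph{linearly} to $\Delta_n Y = Y-Y^n$ (via Lemma~\ref{lem:RDeltaSig}), obtaining on short intervals
\[
\|\Delta_n Y\|_{\infty,[s,t]} \le C\,e^{C\overline{\kappab}(s,t)}\bigl(\Delta_n Y_s + 2\,\delta K_{s,t}\bigr).
\]
The obstruction is the $\delta K_{s,t}$ term, and the paper's key idea --- which is absent from your proposal --- is a stopping-time argument: work with $Y-\Pi_\cdot(Y^n)$ where $\Pi_t(y)=y\vee L_t$, and introduce times $t_0^n < t_1^n < \dots$ at which $Y-\Pi(Y^n)$ crosses the levels $2n^{-\beta}$ and $n^{-\beta}$. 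On each excursion interval $[t_{2k}^n,t_{2k+1}^n]$ where $Y-\Pi(Y^n)\ge n^{-\beta}$, one has $Y>L$ and hence $\delta K=0$, so the linear Gr\"onwall bound propagates the initial value $2n^{-\beta}$ (plus the $\overline{\Theta} n^{-\beta}$ coming from $\Pi(Y^n)-Y^n$) without loss. Between excursions the bound $2n^{-\beta}$ holds by definition. This is how the full rate $n^{-\beta}$ is preserved; your observation $\int Z^n\,dK\le Cn^{-\beta}\delta K$ is correct but is not enough on its own, because it enters multiplicatively in the energy estimate rather than additively as a fixed-size initial condition.

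A minor point: your excursion argument for $\|(Y^n-L)_-\|_\infty\le Cn^{-\beta}$ is plausible but sketchy; the paper obtains this cleanly from Lemma~\ref{lem:deterministic}(ii), which is a direct ODE/Gr\"onwall computation on the penalised equation.
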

Compared with the Theorem 4.1 of \citet{Slominski13} for reflected diffusions, we see that our result matches the optimal rate, up to a logarithmic correction. However the result of \citet{Slominski13} is in $L^p(\Omega)$ whereas the previous theorem is pathwise. We will be able to close this gap partially in Theorem~\ref{th:rateGaussian} when $\RP$ is a Gaussian rough path.

\subsection{Main results for Gaussian-driven RDEs}

In case $X$ is a Gaussian process, several papers give conditions (see in particular \citet*{CassHairerLittererTindel}) for $X$ to be enhanced into a geometric rough path. \citet*{CassLittererLyons} also proved that such conditions yield that the Jacobian of the flow has finite moments of all order (see also \cite{CassHairerLittererTindel} with a bounded drift).

Let $(\Omega,\mathcal{F},\PP)$ be a complete probability space, and let $X = (X^1,\dots X^d)$ be a continuous, centred Gaussian process with independent and identically distributed components and let $R(s,t) = \EE\left(X^1_s X^1_t\right)$ denote the covariance function of $X^1$. Following \citet{CassLittererLyons}, let
\begin{align*}
R\left(\begin{array}{c}
s,t\\
u,v
\end{array}  \right) = \EE\left[(X^1_t-X^1_s)(X^1_v-X^1_u)\right]
\end{align*}
be the rectangular increments of $R$. Then for $r\in[1,\tfrac{3}{2})$, we might assume that $R$ has finite second-order $r$-variation in the sense
\begin{align}\label{hyp:XGauss}
\left\| R \right\|_{r;[0,T]^2} := 
\left(\sup_{\substack{\pi = (t_i)\\\pi' = (t_j')}} \sum_{i,j} R
\left(
\begin{array}{c}
t_i,t_{i+1}\\
t_j',t_{j+1}'
\end{array}  \right)^r
\right)^{\frac{1}{r}}<\infty . \tag{$\text{H}_{\text{Cov}}$}
\end{align}
Under this assumption, $X$ can almost surely be enhanced into a geometric rough path $\RP = (X,\mathbb{X})$ and for any $\alpha \in (\tfrac{1}{3},\tfrac{1}{2r})$, $\RP \in \mathscr{C}^{\alpha}_g$.
Moreover, this assumption permits to obtain upper bounds on the Jacobian of the 
flow of a Gaussian RDE, which shall help us to obtain convergence results in $L^\gamma(\Omega)$ (Theorem~\ref{th:RfSDE_Gaussian}).

\begin{remark}\label{rk:def_fBm}
	A typical example of process satisfying \eqref{hyp:XGauss} is the fractional Brownian motion $(B^H_t)_{t\geq 0}$. We recall that for any Hurst parameter $H\in(0,1)$, $(B^H_t)_{t\geq 0}$ is the centred Gaussian process with covariance
	\begin{align*}
	\EE\left(B^H_t B^H_s \right) = \frac{1}{2} \left(t^{2H} + s^{2H} - |t-s|^{2H} \right) ,\quad \forall t,s\geq 0 .
	\end{align*}
	Such a process is statistically $H$--self-similar and increment stationary (e.g. for $H=\tfrac{1}{2}$, this is a standard Brownian motion). Most importantly regarding the theory of rough paths, if $H\in(\tfrac{1}{3},\tfrac{1}{2}]$, its covariance satisfies \eqref{hyp:XGauss} with $r=\tfrac{1}{2H}$, so that it can be enhanced into a geometric rough paths. If $H\in(\tfrac{1}{2},1)$, then one can solve differential equations driven by $B^H$ in the Young sense (i.e. without needing to enhance $B^H$). Besides, its sample paths are almost surely $\beta$-Hölder continuous, for any $\beta<H$.
\end{remark}

\begin{theorem}\label{th:RfSDE_Gaussian}
	Let $\sigma\in \mathcal{C}^4_b(\R,(\R^d)')$, let $(\psi_n)_{n\in\N}$ satisfy condition \eqref{eq:hyp_psin} and $y_0\geq L_0$ almost surely. Let $X = (X^1,\dots , X^d)$ be an a.s. continuous, centred Gaussian process with independent and identically distributed components, and let $R$ be its covariance function. Assume that either $X\in \Cbeta([0,T];\R^d)~a.s.$ and $L\in \Cbeta([0,T];\R)~a.s.$ for some $\beta\in (\tfrac{1}{2},1)$, or that:
	\begin{itemize}[parsep=0cm,itemsep=0cm,topsep=0.1cm]
		\item $R$ has finite second-order $r$-variations for some $r\in[1,\tfrac{3}{2})$, as in \eqref{hyp:XGauss};
		\item and $L\in \mathcal{C}^\alpha([0,T];\R)~a.s.$ for some $\alpha > 1-\tfrac{1}{2r}$ and that $\EE\left[\|L\|_{\alpha}^\gamma \right]<\infty$, for any $\gamma\geq 1$. 
	\end{itemize}
	Then the conclusions of Theorem~\ref{th:RfSDE} hold in the almost sure sense and moreover, the convergence holds in the following sense: for any $\gamma\geq 1$, 
	\begin{equation}\label{eq:convYnGauss}
	\lim_{n\rightarrow +\infty} \EE\left[\sup_{t\in[0,T]} |Y_t - Y^n_t|^\gamma\right] = 0 .
	\end{equation}
\end{theorem}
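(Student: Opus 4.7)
The proof splits into two parts: first, obtaining the a.s.\ convergence by a pathwise application of Theorem~\ref{th:RfSDE}, and second, upgrading to $L^\gamma$ convergence by invoking the rate of Theorem~\ref{th:rate} together with Gaussian integrability of the rough path norm.

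For the almost sure statement, I note that the hypotheses of Theorem~\ref{th:RfSDE} are purely pathwise. In the Young regime $\beta>\tfrac{1}{2}$ this is direct. In the rough regime, the covariance assumption \eqref{hyp:XGauss}, combined with the standard theory of Gaussian rough paths (\citet{FrizVictoir}, \citet{CassHairerLittererTindel}), guarantees that $X$ can almost surely be enhanced to a geometric $\alpha'$-Hölder rough path $\mathbf{X} \in \Ccalbeta_g$ for any $\alpha' \in (\tfrac{1}{3},\tfrac{1}{2r})$; since $\alpha > 1-\tfrac{1}{2r}$, one can fix $\alpha'$ with $\alpha+\alpha'>1$, so that Theorem~\ref{th:RfSDE} applies on a set of full probability and yields the uniform convergence of $(Y^n,K^n)$ to the solution $(Y,K)$ of $SP(\sigma,\mathbf{X},L)$.

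For the $L^\gamma$ convergence, the plan is to leverage the pathwise bound from Theorem~\ref{th:rate},
\begin{equation*}
\sup_{t\in[0,T]}|Y_t-Y^n_t| \leq C(\omega)\, n^{-\beta}, \quad \text{a.s.},
\end{equation*}
and then integrate. A careful tracking of the constants in the proof of Theorem~\ref{th:rate} and in the a priori estimates of Section~\ref{subsec:app2} shows that $C(\omega)$ is a measurable functional of $\vertiii{\mathbf{X}}_\beta(\omega)$ (or $\|X\|_\beta(\omega)$ in the Young case) and $\|L\|_\alpha(\omega)$, with deterministic coefficients depending only on $\sigma$ and $T$. Raising to the power $\gamma$ and taking expectations gives
\begin{equation*}
\EE\Bigl[\sup_{t\in[0,T]}|Y_t-Y^n_t|^\gamma\Bigr] \leq \EE[C^\gamma]\, n^{-\gamma\beta},
\end{equation*}
so that \eqref{eq:convYnGauss} follows as soon as $C \in L^\gamma(\Omega)$ for every $\gamma\geq 1$.

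The remaining task is therefore the Gaussian integrability of $C$. In the Young case this reduces to Fernique's theorem applied to the Hölder norm of the centred Gaussian path $X$. In the rough case, the analogous property for $\vertiii{\mathbf{X}}_\beta$ under \eqref{hyp:XGauss} follows from Gaussian concentration on the associated Cameron-Martin space (see \citet{FrizVictoir}). Together with the standing assumption that $\EE[\|L\|_\alpha^\gamma]<\infty$ for every $\gamma\geq 1$, this yields $\EE[C^\gamma]<\infty$ for every $\gamma\geq 1$. The main obstacle is that the iterated applications of the rough Gr\"onwall lemma in Theorem~\ref{th:rate} may introduce an essentially exponential dependence of $C$ on $\vertiii{\mathbf{X}}_\beta$, so merely polynomial moment bounds will not suffice; the sharper tail estimates for functionals of Gaussian rough paths of \citet{CassLittererLyons} must be invoked to obtain finiteness of all moments of $C$.
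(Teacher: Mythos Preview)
Your proposed route through the rate constant of Theorem~\ref{th:rate} does not go through. The constant $C(\omega)$ arising there is \emph{not} in $L^\gamma(\Omega)$ for all $\gamma$, and this is precisely the obstruction the paper itself identifies. Tracing the proof of Theorem~\ref{th:rate} (see inequality~\eqref{eq:finalBound}), the rate constant has the form
\[
\exp\!\big\{2T\delta_\RP^{-1} + C\,\overline{\kappab}_{\RP,K}(0,T)\big\}\,(2+\overline{\Theta}+T\delta_\RP^{-1}\overline{\Theta}),
\]
and $\overline{\kappab}_{\RP,K}(0,T)$ depends polynomially on $K_T$, which in turn (via \eqref{eq:boundKT}) depends on $\Theta$, which depends on the Jacobian bound $C^\RP_J$. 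The \citet{CassLittererLyons} estimates give $C^\RP_J$ all polynomial moments, but \emph{not} exponential ones; hence an exponential of a polynomial in $C^\RP_J$ is not integrable. You correctly sense the obstacle in your last sentence, but the resolution you propose is exactly what fails: Cass--Litterer--Lyons is what gives the polynomial moments of $C^\RP_J$, and nothing stronger is available. This is why Theorem~\ref{th:rateGaussian} only yields a logarithmic moment bound on $n^\beta\|Y-Y^n\|_\infty$ rather than a genuine $L^\gamma$ rate.

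The paper's proof of \eqref{eq:convYnGauss} bypasses the rate entirely. It uses the monotonicity $Y^n\leq Y^{n+1}\leq Y$ (Proposition~\ref{lem:convYX}) to get the uniform domination $\|Y-Y^n\|_{\infty,[0,T]}\leq 2\|Y\|_{\infty,[0,T]}$, establishes $\EE[\|Y\|_{\infty,[0,T]}^\gamma]<\infty$ from the pathwise bound \eqref{eq:boundYn} and the moment estimates \eqref{eq:boundsGaussL}, and then concludes by dominated convergence from the almost sure uniform convergence (Proposition~\ref{prop:unifConvYn}\,$(ii)$). This is a softer argument that requires only polynomial moments of $C^\RP_J$, $\vertiii{\RP}_\beta$ and $\|L\|_\alpha$, all of which are available.
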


As a follow-up to the remark initiated after Theorem~\ref{th:rate}, one may wonder if a rate of convergence works in $L^\gamma(\Omega)$, so as to compare it with Theorem 4.1 of \citet{Slominski13}. But observe that Theorem~\ref{th:rate} is proven through a Gr\"onwall argument and the constant $C$ appearing there is thus of exponential form. Besides, the $p$-variation norm of $J^\RP$ (the Jacobian of the flow of the RDE) appears in this exponential, and only has sub-exponential moments \cite[Theorem 6.5]{CassLittererLyons}. This explains the logarithm appearing in the following result.
\begin{theorem}\label{th:rateGaussian}
	Assume that the hypotheses of Theorem~\ref{th:RfSDE_Gaussian} hold. 
 Then for any $\gamma\geq 1$,
	\begin{align*}
	\EE\left[  \log \left(1+\sup_{n\in\N} \big(n^\beta \|Y-Y^n\|_{\infty,[0,T]} \big)\right)^\gamma \right]  <\infty .
	\end{align*}
\end{theorem}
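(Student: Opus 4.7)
The strategy is to revisit the proof of Theorem~\ref{th:rate} in order to track explicitly how the pathwise constant $C$ in the bound $\|Y - Y^n\|_{\infty,[0,T]} \leq C\,n^{-\beta}$ depends on the data $(\RP, L)$, and then to invoke Gaussian integrability to bound the claimed logarithmic moment. The key observation, already flagged in the discussion preceding the statement, is that $C$ arises from a Grönwall-type argument and hence is of exponential form, while the only ingredient preventing a direct $L^\gamma(\Omega)$ estimate is the presence of $\|J^{\RP}\|_{p}$ (the $p$-variation of the Jacobian of the flow) inside this exponential. Since taking a logarithm kills exponentials, a polynomial dependence of $\log(1+C)$ on the relevant random norms is exactly what is needed.

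My first step is to inspect the Grönwall-type iteration used in the proof of Theorem~\ref{th:rate}, based on the rough Grönwall lemma of \cite{DeyaEtAl0}, and to show that the resulting constant admits an estimate of the form
$$
C \;\leq\; \exp\Big( Q\big( \vertiii{\RP}_{\beta},\, \|L\|_{\alpha},\, \|J^{\RP}\|_{p} \big) \Big),
$$
where $Q$ is a polynomial with non-negative coefficients. The Doss--Sussmann representation of Proposition~\ref{prop:ExistenceUnbdd} is what allows $J^{\RP}$ to enter the estimate, as it controls how perturbations of the drift propagate from the penalised equation to the reflected one. Once this bound is in hand, one obtains
$$
\log\!\left( 1 + \sup_{n\in\N} n^{\beta}\,\|Y - Y^n\|_{\infty,[0,T]} \right) \;\leq\; \log(1+C) \;\leq\; 1 + Q\big(\vertiii{\RP}_{\beta},\, \|L\|_{\alpha},\, \|J^{\RP}\|_{p}\big).
$$

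The second step is to check that each random variable entering $Q$ has moments of every order under the hypotheses of Theorem~\ref{th:RfSDE_Gaussian}. Fernique-type estimates for Gaussian rough paths under \eqref{hyp:XGauss} (cf.\ \citet{FrizVictoir}) yield $\EE[\vertiii{\RP}_{\beta}^{\gamma}] < \infty$ for every $\gamma \geq 1$; the bound $\EE[\|L\|_{\alpha}^{\gamma}]<\infty$ is directly assumed; and Theorem~6.5 of \citet{CassLittererLyons} guarantees polynomial moments of every order for $\|J^{\RP}\|_{p}$, even though its exponential moments may fail, which is precisely the sub-exponential phenomenon mentioned before the statement. A routine application of Hölder and Minkowski inequalities then gives $\EE[Q(\cdots)^{\gamma}] < \infty$ for all $\gamma \geq 1$, which closes the proof.

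The main technical obstacle is the first step: one must revisit the Grönwall iteration carefully enough to ensure that $\|J^{\RP}\|_{p}$ enters the exponential only through a polynomial, and not as the argument of a second, nested exponential. A careless estimate could produce a term of the form $\exp\!\bigl(\exp(c\,\|J^{\RP}\|_{p})\bigr)$, whose logarithm would still grow faster than any polynomial and would therefore fail to be integrable against the sub-exponential tail of $\|J^{\RP}\|_{p}$. Keeping track of the linearity of the Grönwall step, together with the Doss--Sussmann transport of the drift, is the key to avoiding this pitfall and obtaining the precise logarithmic moment required by the statement.
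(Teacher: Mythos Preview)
Your approach is essentially the paper's: re-read the constant produced in the proof of Theorem~\ref{th:rate} (the bound~\eqref{eq:finalBound}), observe that its logarithm is bounded by a polynomial in random variables with all moments, and conclude. One step is glossed over in your sketch, however. The exponent in~\eqref{eq:finalBound} is $2T\delta_\RP^{-1} + C\,\overline{\kappab}_{\RP,K}(0,T)$, and $\overline{\kappab}_{\RP,K}$ depends on $K_T$, the total mass of the reflection term. Nothing in the Gr\"onwall iteration of Theorem~\ref{th:rate} bounds $K_T$ in terms of your triple $(\vertiii{\RP}_\beta, \|L\|_\alpha, \|J^\RP\|_p)$; the paper supplies this separately via the Lipschitz property of the one-dimensional Skorokhod map (inequality~\eqref{eq:boundKT}), obtaining $K_T \leq C_S\bigl(C(1+\Theta)(\|X\|_\beta + \|\mathbb{X}\|_{2\beta}) + \|L\|_\beta\bigr)T^\beta$. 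This is in fact how the Jacobian bound $C_J^\RP$ reaches the exponential---indirectly, through $\Theta$ (Lemma~\ref{lem:unifBoundRYn}) and then $K_T$---rather than via a Doss--Sussmann step inside the Gr\"onwall argument itself. Once this bound on $K_T$ is inserted, your outline coincides with the paper's proof.
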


The last result of this paper is a nice application of the previous penalisation technique and results, which are used to prove the existence of a density for the reflected process when the noise is a fractional Brownian motion. It is presented under simplified assumptions as the general case would be out of the scope of the present paper and will be further investigated in a separate work.
\begin{theorem}\label{th:density}
	Let $B^H$ be a one-dimensional fractional Brownian motion with Hurst parameter $H\in[\tfrac{1}{2},1)$, and let $b\in \Cb^1$. Let $(Y,K)$ be the solution to the Skorokhod problem reflected on the horizontal axis
	\begin{equation*}
	\forall t\geq 0,\quad Y_t = y_0 + \int_0^t b(Y_s) \dd s + K_t + B^H_t ,\quad Y_t\geq 0.
	\end{equation*}
	Then for any $t>0$, the restriction of the law of $Y_t$ to $(0,\infty)$, i.e. the measure ${[\mathbf{1}_{\{Y_t>0\}} \PP]\circ Y_t^{-1}}$, admits a density with respect to the Lebesgue measure.
\end{theorem}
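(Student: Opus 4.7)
My plan is to apply Malliavin calculus to the smooth penalised approximations $Y^n$ rather than to the reflected limit $Y$ directly. Since $\sigma\equiv 1$ is constant, the penalised equation
$$Y^n_t = y_0 + B^H_t + \int_0^t \bigl( b(Y^n_s) + \psi_n(Y^n_s) \bigr)\, ds$$
is a pathwise ODE whose coefficients $b+\psi_n$ are smooth with bounded first derivative (since $b\in \Cb^1$ and $\psi_n'\in \Cb^\infty$ by \eqref{eq:hyp_psin}). Standard Malliavin calculus for Young/fBm-driven equations with $H\in[\tfrac{1}{2},1)$ will then yield $Y^n_t\in \mathbb{D}^{1,\infty}$ with Malliavin derivative satisfying the linear Volterra equation
$$D_s Y^n_t = D_s B^H_t + \int_s^t \bigl( b'(Y^n_u) + \psi_n'(Y^n_u) \bigr) D_s Y^n_u \, du, \qquad 0\leq s\leq t.$$

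The crux of the proof will be a uniform-in-$n$ lower bound on $\|D Y^n_t\|_{\HH}$ on the event $\{Y_t>0\}$. The key observation is that \eqref{eq:hyp_psin} forces $\psi_n\equiv 0$ on $[0,\infty)$ (since $0\leq \psi_n(y)\leq n y_- = 0$ for $y\geq 0$), hence $\psi_n'(y)=0$ for $y>0$. Fix $c>0$. On $\{Y_t>2c\}$, continuity of $Y$ yields a random $\delta>0$ with $Y_u>c$ on $[t-\delta,t]$, and by the a.s.~uniform convergence $Y^n\to Y$ from Theorem~\ref{th:RfSDE_Gaussian} one has $Y^n_u>c/2$ on that interval for all $n\geq N(\omega)$. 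There, $\psi_n'(Y^n_u)$ vanishes and the Volterra equation for $DY^n_t$ reduces to one with the uniformly bounded coefficient $b'$; a standard resolvent argument then gives $|D_s Y^n_t|\geq C(T, H, \|b'\|_\infty)\, D_s B^H_t$ for $s\in[t-\delta,t]$, uniformly in large $n$, whence
$$\|DY^n_t\|_{\HH}^2 \geq C^2 \int_{t-\delta}^t |D_s B^H_t|^2\, ds > 0.$$

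To conclude, I plan to pass to the limit in a localised Malliavin framework. Fixing $c>0$ and a smooth cutoff $\chi_c$ supported in $(c,+\infty)$, the $L^\gamma$-convergence~\eqref{eq:convYnGauss} together with the above bounds makes $(\chi_c(Y^n_t))_n$ bounded in $\mathbb{D}^{1,2}$ and convergent in $L^2(\Omega)$ to $\chi_c(Y_t)$. Closability of the Malliavin derivative then yields $\chi_c(Y_t)\in \mathbb{D}^{1,2}$, and the lower bound survives so that $\|D\chi_c(Y_t)\|_{\HH}>0$ a.s.~on $\{Y_t>2c\}$. The Bouleau--Hirsch criterion gives absolute continuity of the law of $\chi_c(Y_t)$ on $\{Y_t>2c\}$, hence of the law of $Y_t$ on $(2c,\infty)$; letting $c\to 0^+$ yields the claim on $(0,\infty)$.

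The main obstacle is precisely this limit step: while each $Y^n_t$ is globally Malliavin-differentiable, the reflected process $Y_t$ is not expected to be so because of the singular boundary term $K$. The localisation is what makes the argument work, and it works precisely because $\psi_n\equiv 0$ on $[0,\infty)$, so all traces of the penalisation disappear on $\{Y>0\}$ and the estimates on $Y^n$ degenerate only near the boundary $\{Y=0\}$. A secondary technical point is verifying that the resolvent of the Volterra equation with bounded coefficient on $[t-\delta,t]$ produces the sharp lower bound in terms of $D_s B^H_t$; this can be handled using the Brownian representation $B^H=K_H^*W$ together with positivity of the Volterra kernel.
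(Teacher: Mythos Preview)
Your approach is essentially the same as the paper's: compute $DY^n_t$ from the penalised equation, use $\psi_n'\equiv 0$ on $(0,\infty)$ to get a lower bound on a time interval near $t$ where $Y^n$ stays positive, pass to the limit, and apply Bouleau--Hirsch. However, there is one concrete gap and one unnecessary detour.

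\textbf{The gap.} You assert that $(\chi_c(Y^n_t))_n$ is bounded in $\mathbb{D}^{1,2}$ ``from the above bounds'', but those bounds are \emph{lower} bounds on $D_sY^n_t$ restricted to $[t-\delta,t]$; they say nothing about $\|DY^n_t\|_{\HH}$. The missing observation is that $\psi_n'\leq 0$ (part of \eqref{eq:hyp_psin}), so that
\[
\mathcal{E}_{u,t}[b'+\psi_n'] \;=\; \exp\Big\{\int_u^t (b'+\psi_n')(Y^n_v)\,dv\Big\} \;\le\; e^{t\|b'\|_\infty}
\]
uniformly in $n$, whence $0\le D_sY^n_t \le e^{t\|b'\|_\infty}\Gamma(t,s)$ and $\sup_n\|DY^n_t\|_{\HH}<\infty$ deterministically. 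Without this you cannot establish the $\mathbb{D}^{1,2}$-boundedness you need, since the cutoff $\chi_c$ acts only on $Y^n_t$ while $D_sY^n_t$ depends on the whole trajectory $(Y^n_u)_{u\in[0,t]}$, which may dip below $0$.

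\textbf{The detour.} Once you have this uniform upper bound, the localisation via $\chi_c$ is superfluous: by Nualart's closability lemma (Lemma~1.2.3 in \cite{Nualart}), $Y_t\in\mathbb{D}^{1,2}$ globally and $DY^n_t\rightharpoonup DY_t$ weakly in $L^2(\Omega;\HH)$. Your expectation that ``$Y_t$ is not expected to be globally Malliavin-differentiable'' is therefore incorrect in this setting. The paper then passes the pointwise lower bound to the limit more cleanly than your sketch: rather than working with random $\delta(\omega)$ and $N(\omega)$, it fixes deterministic events $\Omega_{a,k,j}=\{Y^k_s\ge a \text{ on } [t-j^{-1},t],\,Y^k_t\ge 2a\}\cap\{Y_t\ge 3a\}$ (which cover $\{Y_t>0\}$ by monotonicity of $(Y^n)$), and on each such event tests the weak convergence against $\mathbf{1}_{\Omega_{a,k,j}}\chi\otimes f$ with $f\ge 0$ supported in $[t-j^{-1},t]$. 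This yields $D_sY_t\ge e^{-j^{-1}\|b'\|_\infty}\Gamma(t,s)$ a.e.\ on $[t-j^{-1},t]$, hence $\|DY_t\|_\HH>0$ on $\Omega_{a,k,j}$.
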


Note that unless otherwise stated (mostly in Section~\ref{sec:density}), we will only consider the case $\beta\in(\tfrac{1}{3},\tfrac{1}{2})$. Indeed if $\beta\in(\tfrac{1}{2},1)$, Young integrals can be used, which makes proofs easier.

\section{Existence of reflected RDEs by penalisation}\label{sec:penalisation}

In this section, we will prove Theorems~\ref{th:RfSDE} and \ref{th:RfSDE_Gaussian}. The outline of the proof is as follows: 

First, in Subsection~\ref{subsec:flow}, the solution of the penalised equation $Y^n$ is expressed in Doss-Sussman form, involving the rough path flow of some auxiliary process $Z^n$. This is then used to prove that the sequence $(Y^n)_{n\in\N}$ is non-decreasing. \\
In Subsection~\ref{subsec:existence}, a general result about penalised differential equations (Lemma~\ref{lem:deterministic}) allows to obtain, by comparison of ODEs, an upper bound on $Y^n$ which is independent of $n$ (Lemma~\ref{lem:comparison}). This leads to the proof of existence of $Y$ as a pointwise non-decreasing limit (Proposition~\ref{lem:convYX}).\\
To identify the limit $Y$ as the solution to $SP(\sigma,\RP,L)$, we first derive \emph{a priori} estimates on the $p$-variation of $\sigma(Y^n)$ and $R^{\sigma(Y^n)}$ (Lemmas~\ref{lem:unifBoundRYn} and \ref{lem:RDeltaSig}). The reason the $p$-variation appears here instead of the Hölder norm is explained in Subsection~\ref{subsec:app2}.\\
In Subsection~\ref{subsec:unifcont}, we use these \emph{a priori} estimates to deduce that $\int_0^\cdot \sigma(Y^n_u) \dd\RP_u$ is uniformly (in $n$) Hölder continuous (Corollary~\ref{cor:unifHolder}). By using again Lemma~\ref{lem:deterministic}, it follows that $\|(Y^n-L)_-\|_\infty \rightarrow 0$, and since this term appears in the upper bound on $Y^n_t-Y^n_s$ (in Lemma~\ref{lem:comparison}), one now obtains that $Y$ is continuous. By Dini's theorem, this implies that $Y^n$ converges uniformly to $Y$ (Proposition~\ref{prop:unifConvYn}). \\
In Subsection~\ref{subsec:identification}, the uniform convergence and the \emph{a priori} estimates of Lemma~\ref{lem:unifBoundRYn} are used to prove that $\int_0^\cdot \sigma(Y^n_u) \dd\RP_u$ converges uniformly to $\int_0^\cdot \sigma(Y_u) \dd\RP_u$. Hence the penalisation term $K^n:=\int_0^\cdot \psi_n(Y^n_u-L_u) \dd u$ also converges uniformly to some path $K$ such that $(Y,K)$ is the solution of $SP(\sigma,\RP,L)$.

\subsection{Flow of an RDE and existence of a global solution to \eqref{eq:penalized_smoothed_fSDE}}\label{subsec:flow}

We aim at solving \eqref{eq:RDEdrift} with an unbounded drift $b$. Recall that the Doss-Sussmann representation (\citet{Doss} and \citet{Sussmann}) provides the solution of an SDE with one-dimensional noise as the composition of the flow of $\sigma$ with the solution of a random ODE. However its multidimensional generalization requires strong geometric assumptions on $\sigma$ (see \cite{Doss}). Instead, \citet{FrizOberhauser} established a less explicit but similar formulation in terms of flows, requiring that both $\sigma$ and $b$ are bounded with bounded derivatives.

Note that when the vector fields are unbounded (which is the case of $\psi_n$), known counter-examples show that global solutions of RDEs may not exist. Nevertheless, for an RDE with coefficient $V = (V_1,\dots,V_d)$ on $\R^e$, there are several existence results (\cite{LejayRDE,LejayUnboundd}, \cite[Exercise 10.56]{FrizVictoir} and \cite{BailleulCatellier}) but they are not applicable here.
However this general approach neglects the special nature of the drift term and its smooth driver ``$dt$'' by considering it as any other component of the rough driver. On the other hand, \citet{RiedelScheutzow} proved that under a linear growth assumption on $b$,  \eqref{eq:RDEdrift} has a unique semiflow of solutions. Under similar assumptions, we provide here a Doss-Sussmann representation of the solution.

Let $\beta\in (\tfrac{1}{3},\tfrac{1}{2})$, $\RP\in \Ccalbeta_g\left([0,T];\R^d\right)$ and $\sigma\in \Cb^4(\R^e;\mathcal{L}(\R^d,\R^e))$. Consider the RDE
\begin{align}\label{eq:RDE}
\dd \tilde{Y}_t = \sigma(\tilde{Y}_t)\dd \RP_t ,
\end{align}
and  denote by $\left(\flow[\RP][y_0]\right)_{t\in[0,T]}$ the solution started from $y_0$, $y_0\mapsto \flow[\RP][y_0]$ the flow of the solution, $\Jac[\RP][y_0]$ its Jacobian and $\Jac[\RP][y_0][0][t]$ the inverse of the Jacobian (note that $\sigma\in \Cb^3$ is enough for existence and uniqueness in \eqref{eq:RDE}). We know that the smoothness of the flow depends on the smoothness of $\sigma$: for any $t$, $y_0\mapsto\flow[\RP][y_0]$ is Lipschitz continuous and twice differentiable (see for instance \cite[Proposition 3]{FrizOberhauser}). According to \cite[Corollary 4.6]{CassLittererLyons}, $\Jac[\RP][y_0]$ is uniformly (in $(t,y_0)\in [0,T]\times \R$) bounded by a quantity depending only on $p=\beta^{-1}$, $\|\RP\|_{\pv,[0,T]}$ and the so-called $\alpha$-local $p$-variation of $\RP$ (see \cite[Definition 4.3]{CassLittererLyons}). We denote this upper bound by $C^{\RP}_J$.\\
The inverse of the Jacobian $\Jac[\RP][\cdot][0][t] = (\Jac[\RP][\cdot])^{-1}$ is also the Jacobian of the flow of the same RDE with $X$ evolving backward. 
Thus, as noticed in the proof of \cite[Theorem 7.2]{CassHairerLittererTindel}, $\Jac[\RP][\cdot][0][t]$ is also bounded by $C^{\RP}_J$:
\begin{align}\label{eq:boundUJJinv}
\sup_{y_0\in\R^e} \max\left(\|\Jac[\RP][y_0][\cdot]\|_{\infty,[0,T]}, \|\Jac[\RP][y_0][0][\cdot]\|_{\infty,[0,T]} \right) \leq C^{\RP}_J <\infty .
\end{align}
Note also that with $\sigma\in\Cb^4$, $\Jac[\RP][\cdot][0][t]$ and $\Jac[\RP][\cdot]$ are Lipschitz continuous, uniformly in $t$.\\
Besides, when $X$ is Gaussian with \textit{i.i.d.} components and satisfies \eqref{hyp:XGauss}, $C^\RP_J$ has finite moments of all orders (\cite[Theorem 6.5]{CassLittererLyons} and \cite[Theorem 7.2]{CassHairerLittererTindel}).
As observed in \cite[Section 7]{CassHairerLittererTindel}, $\Jac[\RP][z][0][\cdot]$ satisfies the following linear RDE, for any fixed $z$:
\begin{align*}
d\Jac[\RP][z][0][t] =d\mathbf{M}_t \Jac[\RP][z][0][t] ,
\end{align*}
where $\mathbf{M}$ depends on the flow $\flow[\RP][z]$. If $e=1$ (recall $e$ is the dimension of the space in which $y$ lives), it is thus a consequence of the fact that $\Jac[\RP][z][0] = 1$ and of the uniqueness in the previous equation that $\Jac[\RP][z][0][t]>0$ for any $z\in\R$ and any $t\geq 0$. Hence it follows from \eqref{eq:boundUJJinv} that
\begin{align}\label{eq:lowBoundJinv}
\forall z\in \R,\quad \Jac[\RP][z][0][t] \geq (C^{\RP}_J)^{-1} (>0).
\end{align}

\begin{proposition}\label{prop:ExistenceUnbdd}
	Let $d,e\in\N^*$. Let $\sigma\in\Cb^4(\R^e, \mathcal{L}(\R^d;\R^e))$ and assume that
	\begin{equation*}
	\quad b\in \mathcal{C}^1(\R^e,\R^e) ~\text{ and }~\nabla b\in \Cb(\R^e,\R^{e\times e}) .
	\end{equation*}
	Let $\beta\in(\tfrac{1}{3},\tfrac{1}{2})$, and let $\RP\in \Ccalbeta_g\left([0,T],\R^d\right)$. Then for any initial condition $y_0$, there exists a unique $(Y,Y') \in \mathcal{C}_X^\beta$ such that $Y$ solves Eq. \eqref{eq:RDEdrift} on $[0,T]$ in the sense of controlled rough paths, and $Y' = \sigma(Y)$. Moreover,
	\begin{equation*}
	\left\{
	\begin{array}{ll}
	Y_t &=\flow[\RP][Z_t] \\
	Z_t &=  y_0 + \int_0^t  W(s,Z_s) \dd s 
	\end{array}
	\right.
	,~t\in[0,T],
	\end{equation*}
	where 
	\begin{equation*}
	W(t,z) = \Jac[\RP][z][0][t]~ b\left(\flow[\RP][z]\right), ~(t,z)\in [0,T]\times \R^e.
	\end{equation*}
\end{proposition}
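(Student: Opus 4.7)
I would follow the Doss--Sussmann / \cite{FrizOberhauser} idea of conjugating with the flow of the drift-free RDE: seek a solution of the form $Y_t = \flow[\RP][Z_t]$ for some absolutely continuous path $Z$. A formal chain rule along the rough flow gives
\begin{equation*}
dY_t = \sigma(Y_t)\,d\RP_t + \Jac[\RP][Z_t]\,\dot Z_t\,dt,
\end{equation*}
so matching \eqref{eq:RDEdrift} forces $\dot Z_t = \Jac[\RP][Z_t][0][t]\,b(\flow[\RP][Z_t]) = W(t,Z_t)$, precisely the ODE of the statement. The proof then splits into (i) solving this ODE globally, (ii) running the chain rule rigorously to identify $Y:=\flow[\RP][Z_\cdot]$ as a controlled rough path solution of \eqref{eq:RDEdrift} with $Y'=\sigma(Y)$, and (iii) deducing uniqueness by inverting the substitution.

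\textbf{Step (i): global well-posedness of the $Z$-ODE.} The preliminaries provide that $\Jac[\RP][\cdot][0][t]$ and $\Jac[\RP][\cdot]$ are Lipschitz in $z$ uniformly in $t$ (since $\sigma\in\Cb^4$) and that $\Jac[\RP][\cdot][0][t]$ is uniformly bounded by $C_J^{\RP}$ via \eqref{eq:boundUJJinv}; combined with $\nabla b\in\Cb$, this makes $W$ Lipschitz in $z$ uniformly in $t$. For the growth, boundedness of $\sigma$ and a standard RDE estimate yield $|\flow[\RP][z]-z|\leq C(\RP)$ uniformly in $(t,z)$, hence $|b(\flow[\RP][z])|\leq |b(0)|+\|\nabla b\|_\infty(|z|+C(\RP))$ and $|W(t,z)|\leq C(\RP)(1+|z|)$. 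Cauchy--Lipschitz with linear growth then produces a unique $Z\in\mathcal{C}^1([0,T],\R^e)$.

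\textbf{Step (ii) and main obstacle.} For $Y_t:=\flow[\RP][Z_t]$, the flow identity $U_{t\leftarrow 0}^{\RP;Z_s}=\flow[\RP][Y_s][t][s]$ gives the decomposition
\begin{equation*}
\delta Y_{s,t} = \bigl(\flow[\RP][Z_t]-\flow[\RP][Z_s]\bigr) + \bigl(\flow[\RP][Y_s][t][s]-Y_s\bigr).
\end{equation*}
A first-order Taylor expansion of the first bracket in the initial condition, combined with the Jacobian chain rule $\Jac[\RP][Z_s]\,\Jac[\RP][Z_s][0][s] = J_{t\leftarrow s}^{\RP;Y_s}$ and the ODE for $Z$, identifies it with $b(Y_s)(t-s)+\mathrm{o}(t-s)$; the second bracket is the increment of a drift-free RDE started at $Y_s$, so equals $\sigma(Y_s)\delta X_{s,t}+\sigma'\sigma(Y_s)\mathbb{X}_{s,t}+\mathrm{O}((t-s)^{3\beta})$. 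Summing along a partition and invoking the sewing lemma then identifies the two pieces with $\int_s^t b(Y_u)\,du$ and the rough integral $\int_s^t \sigma(Y_u)\,d\RP_u$ respectively, and the same expansion yields $(Y,\sigma(Y))\in\Cbeta_X$. The main technical obstacle is turning these local Taylor expansions into \emph{uniform} Hölder estimates on $[0,T]$: this is done by tracking the Lipschitz constants of $\flow[\RP][\cdot]$ and $\Jac[\RP][\cdot]$ via \eqref{eq:boundUJJinv}, combined with the global linear-growth bound on $Z$ obtained in Step (i).

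\textbf{Step (iii): uniqueness.} For any other controlled solution $\tilde Y$, invertibility of $z\mapsto\flow[\RP][z]$ (which follows from \eqref{eq:lowBoundJinv} in the scalar case $e=1$, and in general from the uniform bound \eqref{eq:boundUJJinv} combined with an inverse-function argument) allows one to define $\tilde Z_t$ by $\flow[\RP][\tilde Z_t]=\tilde Y_t$. A symmetric chain-rule computation shows that $\tilde Z$ satisfies $\dot{\tilde Z}=W(\cdot,\tilde Z)$ with $\tilde Z_0=y_0$, and the uniqueness obtained in Step (i) forces $\tilde Z=Z$, hence $\tilde Y=Y$.
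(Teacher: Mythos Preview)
Your overall strategy is correct but the execution differs substantially from the paper's. The paper does \emph{not} verify the chain rule along the rough flow directly via sewing; instead it first quotes local existence and the blow-up alternative for \eqref{eq:RDEdrift} from \cite{LejayRDE,FrizVictoir}, and then establishes the representation $Y_t=\flow[\RP][Z_t]$ on the local existence interval by \emph{smooth approximation}: for Lipschitz geometric rough paths $\RP^k\to\RP$, the identity $Y^k_t=U^{\RP^k;Z^k_t}_{t\leftarrow 0}$ holds by the classical chain rule, and a Gr\"onwall estimate (using continuity of the It\^o--Lyons map for the flow and its Jacobian) gives $Z^k\to Z$ uniformly. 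Non-explosion of $Z$ then rules out blow-up of $Y$, yielding global existence. Finally, the controlled-path interpretation and uniqueness are obtained in one stroke by replacing $b$ with a bounded truncation $b^{(M)}$ where $M=\|Y\|_\infty$, which reduces to standard RDE theory with bounded coefficients. Your direct route avoids the approximation layer entirely, but the price is that the ``uniform H\"older estimates'' you correctly flag as the main obstacle in Step~(ii) must actually be supplied, and your Step~(iii) requires running the chain rule in reverse (showing that $\tilde Z_t:=U^{\RP;\tilde Y_t}_{0\leftarrow t}$ is absolutely continuous and solves the ODE), which is at least as delicate as Step~(ii). The paper's truncation trick sidesteps both issues.

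One correction in Step~(i): $W$ is \emph{not} globally Lipschitz in $z$. The term $b(\flow[\RP][z])$ is unbounded, so when you split $W(t,z_1)-W(t,z_2)$, the cross-term $|\Jac[\RP][z_1][0][t]-\Jac[\RP][z_2][0][t]|\,|b(\flow[\RP][z_1])|$ grows linearly in $|z_1|$; the paper states explicitly that $W(t,\cdot)$ is only locally Lipschitz. Your conclusion still stands because you separately check linear growth, and local Lipschitz plus linear growth is enough for global well-posedness---just adjust the wording.
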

The proof is postponed to the Appendix~\ref{subsec:app1}. The idea is to derive first the local existence and a Doss-Sussmann representation on a small time interval. Global existence is then achieved by stability of the ODE in the Doss-Sussmann representation. This extends a result of \citet{FrizOberhauser} to unbounded drifts.

As a consequence of Proposition~\ref{prop:ExistenceUnbdd} and up to a slight adaptation due to the boundary term $L$, there is a global solution to the penalised RDE \eqref{eq:penalized_smoothed_fSDE}.

\begin{proposition}\label{prop:globalExistence}
	Let $\sigma\in\Cb^4(\R;(\R^d)')$, $n\in\N$, and $\psi_n$ satisfying \eqref{eq:hyp_psin}. Let $\beta\in(\tfrac{1}{3},\tfrac{1}{2})$, let $\RP\in \Ccalbeta_g\left([0,T],\R^d\right)$ and let $(L_t)_{t\in[0,T]}$ be a barrier in $\mathcal{C}^\alpha([0,T],\R)$, with $\alpha >1-\beta$. Then for any initial condition $y_0$ such that $y_0\geq L_0$, there exists a unique $(Y^n,{Y^n}') \in \mathcal{C}_X^\beta$ such that $Y^n$ solves Eq. \eqref{eq:penalized_smoothed_fSDE} on $[0,T]$ in the sense of controlled rough paths, and ${Y^n}' = \sigma(Y^n)$. Moreover,
	\begin{equation}\label{eq:def_XY}
	\left\{
	\begin{array}{ll}
	Y_t^n &=\flow[\RP][Z^n_t] \\
	Z_t^n &=  y_0 + \int_0^t  W^n(s,Z^n_s) \dd s 
	\end{array}
	\right.
	,~t\in[0,T],
	\end{equation}
	where 
	\begin{equation*}
	W^n(t,z) = \Jac[\RP][z][0][t]~ \psi_n\left(\flow[\RP][z] -L_t\right), ~(t,z)\in [0,T]\times \R.
	\end{equation*}
\end{proposition}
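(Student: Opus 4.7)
The plan is to adapt the Doss--Sussmann construction of Proposition~\ref{prop:ExistenceUnbdd} to the non-autonomous drift $(t,y)\mapsto \psi_n(y-L_t)$, rather than the state-only drift $b(y)$. Since the barrier $L$ enters the equation only through the time argument of $\psi_n$ and the drift is integrated against Lebesgue measure (not $d\RP_s$), the rough-paths side of the analysis should be unaffected and only continuity of $L$ will be genuinely required for the construction. The same three-step pattern as in Proposition~\ref{prop:ExistenceUnbdd} applies: first solve globally the auxiliary ODE for $Z^n$ in \eqref{eq:def_XY}; second set $Y^n_t:=\flow[\RP][Z^n_t]$ and identify it as a controlled-rough-path solution of \eqref{eq:penalized_smoothed_fSDE} by a chain rule; third deduce uniqueness from the invertibility of the flow $z\mapsto \flow[\RP][z]$.

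For the ODE, the vector field $W^n(t,z)=\Jac[\RP][z][0][t]\,\psi_n(\flow[\RP][z]-L_t)$ satisfies the three hypotheses needed for Cauchy--Lipschitz on $[0,T]$: continuity in $t$, by continuity of $t\mapsto\flow[\RP][z]$, $t\mapsto\Jac[\RP][z][0][t]$ and $t\mapsto L_t$ composed with the smooth $\psi_n$; global Lipschitz continuity in $z$ uniformly in $t$, from the Lipschitz-in-$z$ regularity of $\flow[\RP][\cdot]$ and $\Jac[\RP][\cdot][0][t]$ (both recorded below \eqref{eq:boundUJJinv}) together with $\psi_n'\in\Cb$ from \eqref{eq:hyp_psin}; and at most linear growth in $z$ uniformly in $t$, from $|\psi_n(y)|\leq n|y|$, the uniform bound \eqref{eq:boundUJJinv} on the Jacobian, the linear-in-$z$ bound $|\flow[\RP][z]|\leq|z|+C$ (consequence of $\sigma\in\Cb$ giving a uniform control on $\|\flow[\RP][z]\|_{\beta,[0,T]}$), and $\|L\|_\infty<\infty$. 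Classical ODE theory then yields a unique $Z^n\in\mathcal{C}^1([0,T];\R)$.

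Setting $Y^n_t := \flow[\RP][Z^n_t]$, the rough chain rule for the composition of a $\mathcal{C}^1$ path with the rough flow---the technical heart of the proof of Proposition~\ref{prop:ExistenceUnbdd}, in the spirit of \cite{FrizOberhauser}---shows that $(Y^n,\sigma(Y^n))\in\Cbeta_X$ and yields the identity
\begin{equation*}
Y^n_t = y_0 + \int_0^t \sigma(Y^n_s)\dd\RP_s + \int_0^t \Jac[\RP][Z^n_s][s][0]\,\dot Z^n_s\,ds .
\end{equation*}
Substituting $\dot Z^n_s=W^n(s,Z^n_s)$ and using the scalar identity $\Jac[\RP][z][s][0]\cdot\Jac[\RP][z][0][s]=1$ (valid in the case $e=1$) cancels the two Jacobians inside the Riemann integral and reduces the drift to $\int_0^t \psi_n(Y^n_s-L_s)\,ds$, which is exactly \eqref{eq:penalized_smoothed_fSDE}. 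For uniqueness, any other controlled-rough-path solution $\tilde Y^n$ gives rise to $\tilde Z^n_t:=(\flow[\RP][\cdot])^{-1}(\tilde Y^n_t)$ (the inverse is well-defined since, by \eqref{eq:lowBoundJinv} and $e=1$, $z\mapsto\flow[\RP][z]$ is a diffeomorphism of $\R$); the same chain rule shows $\tilde Z^n\in\mathcal{C}^1$ and satisfies $\dot Z=W^n(t,Z)$, $Z_0=y_0$, whence $\tilde Z^n=Z^n$ by Cauchy--Lipschitz and consequently $\tilde Y^n=Y^n$.

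The main obstacle, already present in Proposition~\ref{prop:ExistenceUnbdd}, is the rigorous implementation of the rough chain rule within the controlled-rough-paths framework. The novelty here is only the non-autonomous dependence of the drift through $L_t$, but because $L$ enters solely inside the smooth $\psi_n$ and the drift is integrated against Lebesgue measure, the rough-integral manipulations are untouched and only continuity of $L$ is used in the existence argument; the stronger Hölder condition $\alpha>1-\beta$ on $L$ plays no role here, but is needed for the subsequent Skorokhod analysis.
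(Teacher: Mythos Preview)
Your approach is correct and takes a genuinely different route from the paper. The paper avoids re-running the Doss--Sussmann argument for a non-autonomous drift by lifting to a two-dimensional \emph{autonomous} problem: it sets $\widehat{b}_n(y^1,y^2)=(\psi_n(y^1-y^2),0)^T$ and $\widehat{\sigma}(y)=\left(\begin{smallmatrix}\sigma(y^1)&0\\0&1\end{smallmatrix}\right)$, forms the Young pairing $\widehat{\RP}$ of $\RP$ and $L$ (this is precisely where the hypothesis $\alpha>1-\beta$ is used), and applies Proposition~\ref{prop:ExistenceUnbdd} as a black box to the augmented RDE $d\widehat{Y}^n=\widehat{b}_n(\widehat{Y}^n)\,dt+\widehat{\sigma}(\widehat{Y}^n)\,d\widehat{\RP}$; the first component is $Y^n$, and the representation \eqref{eq:def_XY} falls out because the flow of $\widehat{\sigma}$ decouples into $(\flow[\RP][z_1],\,z_2+L_t-L_0)$. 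Your route is more hands-on but buys something: it makes transparent that only continuity of $L$ is needed for this proposition (as you correctly observe), whereas the paper's lift consumes the H\"older hypothesis on $L$ already here. The price is that you must re-run the approximation-by-smooth-paths argument of Proposition~\ref{prop:ExistenceUnbdd} with a time-dependent drift, rather than invoking it verbatim. One minor correction: $W^n(t,\cdot)$ is only \emph{locally} Lipschitz in $z$, not globally, since the Lipschitz factor $\Jac[\RP][\cdot][0][t]$ is multiplied by the unbounded $\psi_n(\flow[\RP][\cdot]-L_t)$; this is exactly the situation flagged in the appendix proof of Proposition~\ref{prop:ExistenceUnbdd}, and your linear-growth claim (which is correct) is what actually secures global existence of the ODE.
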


\begin{remark}\label{rem:sYn}
	\begin{itemize}[parsep=0cm,itemsep=0cm,topsep=0.1cm]
		\item The path $\sigma(Y^n)$ is controlled by $X$ and its Gubinelli derivative is $\sigma'(Y^n) \sigma(Y^n)$ ($\sigma'(y)$ is an element of $\mathcal{L}(\R,\mathcal{L}(\R^d,\R))$). In particular,  $\Rsn\in \mathcal{C}^{2\beta}([0,T],(\R^d)')$ (see \cite[Lemma 7.3]{FrizHairer}).
		\item For $\beta>\tfrac{1}{2}$, our assumptions on the coefficients meet those from \cite{HuNualart} and thus there exists a unique solution to (\ref{eq:penalized_smoothed_fSDE}). Moreover, the previous Doss-Sussmann representation holds also true by a simple application of the usual chain rule for Young integrals.
	\end{itemize}
\end{remark}

\begin{proof}
	For $y\in \R^2$, define $\widehat{b}_n(y) = (\psi_n(y^1-y^2),0)^T$, where we used the notation $y=(y^1,y^2)^T\in \R^2$. In the same way, define $\widehat{\sigma}(y) = \left(\begin{array}{cc} \sigma(y^1) & 0\\ 0 & 1\end{array}\right)$, so that $\widehat{\sigma}\in \Cb^4\left(\R^2;\mathcal{L}(\R^{d+1},\R^2)\right)$. Finally, since $\alpha+\beta>1$, let $\widehat{\RP} \in \Ccalbeta_g$ be the Young pairing of $\RP$ and $L$ (see \cite[Section 9.4]{FrizVictoir}). Proposition~\ref{prop:ExistenceUnbdd} ensures that there exists a unique solution $\widehat{Y}^n \in \Cbeta([0,T];\R^2)$ to the following RDE with drift
	\begin{align*}
	\dd\widehat{Y}^n_t = \widehat{b}_n(\widehat{Y}^n_t)\dd t + \widehat{\sigma}(\widehat{Y}^n_t)\dd\widehat{\RP}_t .
	\end{align*}
	Since $Y^n$ corresponds to the first component of $\widehat{Y}^n$ and $(Y^n,L)$ is controlled by $\widehat{X}$, it is not difficult to check that $Y^n$ is controlled by $X$ (with Gubinelli derivative $\sigma(Y^{n})$), and the result follows.
\end{proof}

\subsection{Existence of the limit path}\label{subsec:existence}

The following result will be used several times in the sequel. It gives uniform estimates for solutions of integral equations with drift coefficient $\psi_n$. The proof is postponed to the Appendix.

\begin{lemma}\label{lem:deterministic}
	Let $\Psi>0$, $\ell , (g^n)_{n\in\N}$ be continuous functions such that $g^n_0=0$, and assume that for each $n\in\N$, $f^n$ is a solution to:
	\begin{equation*}
	\begin{cases}
	&f^n_t = f^n_0 + g^n_t + \Psi\int_0^t \psi_n(f^n_u-\ell_u) \dd u,\quad \forall t\in[0,T], \\
	&f^n_0 = f_0 \geq \ell_0.
	\end{cases}
	\end{equation*}
	\begin{enumerate}[label = (\roman*)]
		\item Then for all $t\in[0,T]$,
		\begin{align*}
		\forall n\in\N,~ |\delta f^n_{0,t}-\delta\ell_{0,t}| \leq \sqrt{26} \|g^n_\cdot -\delta\ell_{0,\cdot}\|_{\infty,[0,t]} ~;
		\end{align*}

		\item Let $\beta\in(0,1)$. If $\ell , (g^n)_{n\in\N}\in \Cbeta([0,T],\R)$ and $f^n_0\geq \ell_0$, then
		\begin{equation*}
		\forall t\in [0,T],~ \forall n\in\N, \quad  \psi_n(f^n_t-\ell_t) \leq \overline{\Psi}_n (\Psi^{-\beta} +\Psi^{1-\beta}) n^{1-\beta},
		\end{equation*}
		where $\overline{\Psi}_n = C (\|\ell\|_{\beta} +  \|g^n\|_{\beta} + \tfrac{1}{2}\Psi T^{1-\beta})$.
	\end{enumerate}
\end{lemma}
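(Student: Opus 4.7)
I would first reformulate the problem using $u^n_t := f^n_t - \ell_t$ and $v^n_t := g^n_t - \delta\ell_{0,t}$, so that the integral equation becomes $u^n_t = u^n_0 + v^n_t + a^n_t$ with $a^n_t := \Psi\int_0^t \psi_n(u^n_s)\,ds$, $u^n_0 \geq 0$ and $v^n_0 = 0$. Two facts from \eqref{eq:hyp_psin} are central: $\psi_n \geq 0$, so $a^n$ is nondecreasing; and since $\psi_n(y) \leq n y_-$, we have $\psi_n \equiv 0$ on $[0,\infty)$, so $a^n$ can only grow at times $s$ where $u^n_s < 0$. For part~(i), the lower bound $u^n_t - u^n_0 \geq v^n_t \geq -\|v^n\|_{\infty,[0,t]}$ is immediate. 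For the matching upper bound, I would note that at any $s \leq t$ with $u^n_s < 0$ one has $a^n_s = u^n_s - u^n_0 - v^n_s \leq -u^n_0 + \|v^n\|_{\infty,[0,t]} \leq \|v^n\|_{\infty,[0,t]}$, so a short continuity/monotonicity argument (if $a^n$ ever reached $\|v^n\|_{\infty}$ and grew beyond, any subsequent time of strict increase would force $u^n_s \geq 0$, a contradiction) gives $a^n_t \leq \|v^n\|_{\infty,[0,t]}$. Combining yields $|u^n_t - u^n_0| \leq 2\|v^n\|_{\infty,[0,t]}$, which is stronger than and implies the stated $\sqrt{26}$ bound.

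\textbf{Part (ii).} Assume $u^n_t = -a < 0$ (if $u^n_t \geq 0$ there is nothing to prove), and define
\[\tau := \sup\{s \leq t : u^n_s \geq -a/2\} \in [0,t),\]
which is well-defined because $u^n_0 \geq 0$; by continuity $u^n_\tau = -a/2$ and $u^n_s \leq -a/2$ on $[\tau,t]$. Monotonicity of $\psi_n$ together with the lower bound in \eqref{eq:hyp_psin} yields $\psi_n(u^n_s) \geq (na/2 - 1/2) \vee 0$ on $[\tau,t]$. Plugging this into the integral equation restricted to $[\tau,t]$,
\[-\tfrac{a}{2} = \delta v^n_{\tau,t} + \Psi\int_\tau^t \psi_n(u^n_s)\,ds,\]
produces two competing one-sided bounds on $t-\tau$. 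Nonnegativity of the integral forces $\delta v^n_{\tau,t} \leq -a/2$, hence the $\beta$-Hölder norm gives $(t-\tau)^\beta \geq a/(2\|v^n\|_\beta)$. Conversely, substituting the pointwise lower bound on $\psi_n$ into the equation and absorbing the residual term via $\Psi(t-\tau)/2 \leq \tfrac{1}{2}\Psi T^{1-\beta}(t-\tau)^\beta$ (which is what produces the $\tfrac{1}{2}\Psi T^{1-\beta}$ summand in $\overline{\Psi}_n$) yields $(t-\tau)^{1-\beta} \leq C\overline{\Psi}_n/(\Psi n a)$. Multiplying the two inequalities and solving for $a$ gives $a \leq C\|v^n\|_\beta^{1-\beta}\overline{\Psi}_n^\beta/(\Psi n)^\beta$, whence $\psi_n(u^n_t) \leq na$ is bounded by the desired quantity.

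\textbf{Main obstacle.} The delicate point is the algebraic bookkeeping in part~(ii): one must retain the lower-order $-1/2$ term in $\psi_n(-a/2) \geq na/2 - 1/2$ throughout (this is what forces the $\tfrac{1}{2}\Psi T^{1-\beta}$ piece in $\overline{\Psi}_n$) and then, after obtaining the bound in the compact form $a \leq C\|v^n\|_\beta^{1-\beta}\overline{\Psi}_n^\beta/(\Psi n)^\beta$, use subadditivity of $x \mapsto x^\beta$ on $[0,\infty)$ to split $\overline{\Psi}_n^\beta$ and isolate the two distinct contributions $\Psi^{-\beta}$ and $\Psi^{1-\beta}$ that appear in the statement. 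The degenerate regime $na < 2$ (where $\psi_n(-a/2) \geq na/2 - 1/2$ is useless) is handled by the trivial estimate $\psi_n(u^n_t) \leq na < 2$, which is absorbed into the constant $C$.
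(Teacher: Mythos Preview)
Your proof is correct in both parts, but follows a genuinely different route from the paper.

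For part (i), the paper squares $\overline{f}^n_t := \delta f^n_{0,t}-\delta\ell_{0,t}$ and uses the energy identity together with $(f^n_u-\ell_u)\psi_n(f^n_u-\ell_u)\leq 0$ to reach the constant $\sqrt{26}$. Your argument is more direct: you bound the compensator $a^n$ itself via the observation that $a^n$ can only increase where $u^n<0$, and at such points $a^n_s\leq \|v^n\|_{\infty,[0,t]}$. This is essentially the elementary Skorokhod-map estimate, and it yields the sharper constant $2$ in place of $\sqrt{26}$.

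For part (ii), the paper compares $f^n-\ell$ with the solution of the \emph{linear} ODE obtained from the lower bound $\psi_n(x)\geq -\tfrac12-nx$, solves this ODE by Duhamel's formula, and then estimates the negative part of the explicit solution. Your last-exit-time argument avoids any explicit ODE solving: the H\"older bound on $v^n$ gives a lower bound on $(t-\tau)^\beta$, while the pointwise lower bound on $\psi_n$ (with the $-\tfrac12$ term absorbed via $\Psi(t-\tau)/2\leq \tfrac12\Psi T^{1-\beta}(t-\tau)^\beta$) gives an upper bound on $(t-\tau)^{1-\beta}$; combining the two traps $a$. Two small remarks on your write-up: first, the ``degenerate regime $na<2$'' does not require separate treatment, since the inequality $\psi_n(u^n_s)\geq na/2-\tfrac12$ is valid regardless of sign and your chain of inequalities goes through unchanged; second, once you have $na\leq C\|v^n\|_\beta^{1-\beta}\overline{\Psi}_n^{\beta}(\Psi n)^{-\beta}n$, the passage to the stated form is simpler than you indicate---just use $\|v^n\|_\beta\leq C^{-1}\overline{\Psi}_n$ to get $na\leq C\,\overline{\Psi}_n\,\Psi^{-\beta}n^{1-\beta}$, which is already bounded by $\overline{\Psi}_n(\Psi^{-\beta}+\Psi^{1-\beta})n^{1-\beta}$. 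No subadditivity splitting is needed, and in fact your bound carries only the $\Psi^{-\beta}$ contribution.
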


We now propose a comparison result. Jointly with the Doss-Sussmann representation \eqref{eq:def_XY} and Lemma~\ref{lem:deterministic}, it will imply the existence of $Z$ and $Y$ as pointwise limits of $(Z^n)$ and $(Y^n)$ (Proposition~\ref{lem:convYX}).

\begin{lemma}\label{lem:comparison}
Let $n\in\N$ and $Y^n$ be the solution of \eqref{eq:penalized_smoothed_fSDE} given by Proposition~\ref{prop:globalExistence}. Recall that $Z^n$ is the solution of the random ODE in \eqref{eq:def_XY}.
\begin{enumerate}[label = (\roman*)]
\item\label{item:comparisoni} There exists $C>0$ which depends only on $\sigma,\beta,T$ such that for any $(s,t)\in \mathcal{S}_{[0,T]}$,
\begin{align*}
	Z^n_s\leq Z^n_t \leq Z^n_s + C~C^\RP_J~\left((\vertiii{\RP}_{\beta,[0,T]} \vee \vertiii{\RP}_{\beta,[0,T]}^{\frac{1}{\beta}})(t-s)^\beta + \sup_{u\in[s,t]}|L_u-L_s| + (Y^n_s - L_s)_-\right) .
\end{align*}

\item\label{item:comparisonii} There exists $C>0$ which depends only on $\sigma,\beta,T$ such that for any $(s,t)\in \mathcal{S}_{[0,T]}$,
\begin{align*}
	|Y^n_t - Y^n_s| \leq C~(C^\RP_J)^2~\left((\vertiii{\RP}_{\beta,[0,T]} \vee \vertiii{\RP}_{\beta,[0,T]}^{\frac{1}{\beta}})(t-s)^\beta + \sup_{u\in[s,t]}|L_u-L_s| + (Y^n_s - L_s)_-\right) .
	\end{align*}
\end{enumerate}
\end{lemma}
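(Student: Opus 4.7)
My plan is to leverage the Doss-Sussmann representation \eqref{eq:def_XY}. For assertion (i), observe that by \eqref{eq:hyp_psin} one has $\psi_n \geq 0$, while \eqref{eq:lowBoundJinv} yields $\Jac[\RP][z][0][u] > 0$; hence the integrand $W^n(u,Z^n_u)$ is non-negative, giving $Z^n_s \leq Z^n_t$. The uniform upper bound \eqref{eq:boundUJJinv} then reduces the upper bound in (i) to controlling $I_{s,t} := \int_s^t \psi_n(Y^n_u - L_u) \dd u$, since $\delta Z^n_{s,t} \leq C^\RP_J\, I_{s,t}$. Denote $g^n_{s,u} := \int_s^u \sigma(Y^n_v) \dd \RP_v$; by Theorem~\ref{th:RI} together with the boundedness of $\sigma, \sigma'$ and the \emph{a priori} estimates on $\|R^{\sigma(Y^n)}\|_{p/2,[s,t]}$ and $\|\sigma(Y^n)\|_{p,[s,t]}$ announced in Subsection~\ref{subsec:app2}, one has the uniform-in-$n$ bound $\|g^n_{s,\cdot}\|_{\infty,[s,t]} \leq C_\sigma\bigl(\vertiii{\RP}_{\beta,[0,T]} \vee \vertiii{\RP}_{\beta,[0,T]}^{1/\beta}\bigr)(t-s)^\beta$.

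To bound $I_{s,t}$, I would split into two cases. If $Y^n_s \geq L_s$, Lemma~\ref{lem:deterministic}(i) applies on $[s,t]$ with $\Psi = 1$, $f^n = Y^n$, $\ell = L$ and the above $g^n$, giving $|\delta Y^n_{s,t} - \delta L_{s,t}| \leq \sqrt{26}\, \|g^n_{s,\cdot} - \delta L_{s,\cdot}\|_{\infty,[s,t]}$; combining with the identity $I_{s,t} = \delta Y^n_{s,t} - g^n_{s,t}$ yields $I_{s,t} \leq C\bigl(\|g^n_{s,\cdot}\|_{\infty,[s,t]} + \sup_{u \in [s,t]} |L_u - L_s|\bigr)$. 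If instead $Y^n_s < L_s$, I would introduce the stopping time $\tau := \inf\{u \in [s,t] : Y^n_u \geq L_u\} \wedge t$. By continuity $Y^n_\tau \leq L_\tau$ (with equality when $\tau < t$), so on $[s,\tau]$ the equation gives
\begin{equation*}
I_{s,\tau} = (Y^n_\tau - L_\tau) - (Y^n_s - L_s) + \delta L_{s,\tau} - g^n_{s,\tau} \leq (Y^n_s - L_s)_- + \sup_{u\in[s,t]} |L_u - L_s| + \|g^n_{s,\cdot}\|_{\infty,[s,t]}.
\end{equation*}
When $\tau < t$, the first case applies on $[\tau, t]$ since $Y^n_\tau = L_\tau$, yielding the same bound for $I_{\tau,t}$ without the deficit term. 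Summing both pieces and inserting the bound on $\|g^n_{s,\cdot}\|_\infty$ completes the proof of (i).

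For (ii), I would exploit Doss-Sussmann by writing
\begin{equation*}
Y^n_t - Y^n_s = \bigl(\flow[\RP][Z^n_t] - \flow[\RP][Z^n_s]\bigr) + \bigl(\flow[\RP][Z^n_s] - \flow[\RP][Z^n_s][s][0]\bigr).
\end{equation*}
The first bracket is bounded by $C^\RP_J\, |Z^n_t - Z^n_s|$ via the mean value theorem and \eqref{eq:boundUJJinv}, while the second is the $[s,t]$-increment of the RDE solution \eqref{eq:RDE} started from $Z^n_s$, bounded by $C\bigl(\vertiii{\RP}_{\beta,[0,T]} \vee \vertiii{\RP}_{\beta,[0,T]}^{1/\beta}\bigr)(t-s)^\beta$ by the classical Friz--Victoir estimate. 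Combining with (i) gives (ii), the extra factor $C^\RP_J$ producing the prefactor $(C^\RP_J)^2$.

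The principal obstacle is the case $Y^n_s < L_s$, where Lemma~\ref{lem:deterministic}(i) is not directly applicable on $[s,t]$ because its standing hypothesis $f_0 \geq \ell_0$ fails. The stopping-time splitting circumvents this by exploiting only the continuity of $Y^n$, the monotonicity of $\psi_n$ and the sign relation $Y^n_\tau - L_\tau \leq 0$, which together make the deficit $(Y^n_s - L_s)_-$ appear with the right multiplicative constant and restore the setting of Lemma~\ref{lem:deterministic}(i) on the remaining subinterval $[\tau,t]$.
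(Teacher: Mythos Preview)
Your argument contains a genuine circularity. The uniform-in-$n$ bound you claim for the rough integral,
\[
\|g^n_{s,\cdot}\|_{\infty,[s,t]} \leq C_\sigma\bigl(\vertiii{\RP}_{\beta,[0,T]} \vee \vertiii{\RP}_{\beta,[0,T]}^{1/\beta}\bigr)(t-s)^\beta,
\]
is precisely the content of Corollary~\ref{cor:unifHolder}, which is proven \emph{after} Lemma~\ref{lem:comparison} and depends on it through Proposition~\ref{lem:convYX} and Lemma~\ref{lem:unifBoundRYn}. The \emph{a priori} estimates of Subsection~\ref{subsec:app2} (Lemma~\ref{lem:RDeltaSig}) that you invoke do not give a bound depending only on $\vertiii{\RP}_\beta$: they bound $\|R^{\sigma(Y^n)}\|_{p/2,[s,t]}$ and $\|Y^n\|_{p,[s,t]}$ in terms of $\kappab_{\RP,K^n}(s,t) = \vertiii{\RP}_{p,[s,t]}^p + (\delta K^n_{s,t})^p$, and $\delta K^n_{s,t}$ is exactly the quantity $I_{s,t}$ you are trying to control. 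The paper's remark immediately following this lemma flags exactly this issue: a direct estimate of the rough integral at this stage would depend on $n$.

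The paper circumvents this by staying entirely at the level of the ODE for $Z^n$ and the flow $\flow[\RP][\cdot]$, never touching the rough integral $\int\sigma(Y^n)\dd\RP$. It introduces auxiliary ODEs $\widetilde{Z}^n_{\cdot\leftarrow s}$ and $\overline{Z}^n_{\cdot\leftarrow s}$, compares them via the ODE comparison principle, and uses only the classical flow estimate $\|\flow[\RP][y][\cdot]\|_{\beta} \leq C_{\sigma,\RP,\beta}$ from \eqref{eq:boundRDEs}, which is independent of $n$ because it concerns the driftless RDE \eqref{eq:RDE}. Lemma~\ref{lem:deterministic}(i) is then applied to $\overline{Z}^n$, whose ``noise'' term is built from the flow increment rather than the rough integral. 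Your decomposition for part~(ii) matches the paper's, but it rests on part~(i), so the gap propagates.
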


\begin{proof}
For any $s\in[0,T)$, define $(\widetilde{Z}^n_{t\leftarrow s})_{t\in[s,T]}$ as the solution of the following (random) ODE:
	\begin{align*}
	\widetilde{Z}^n_{t\leftarrow s} = Z^n_s + C^\RP_J\int_s^t \psi_n\left(\flow[\RP][\widetilde{Z}^n_{u \leftarrow s}][u] -L_u \right) \dd u,~t\in[s,T].
	\end{align*}
	In view of \eqref{eq:boundUJJinv}, there is $W^n(t,z)\leq C^\RP_J \psi_n(\flow[\RP][z][t]-L_t)$, and by the comparison principle for ODEs, it follows that $\widetilde{Z}^n_{t\leftarrow s} \geq Z^n_t$. Observing that 
	\begin{align*}
	\flow[\RP][\widetilde{Z}^n_{u\leftarrow s}][u] = \flow[\RP][Z^n_s][u] +\flow[\RP][\widetilde{Z}^n_{u\leftarrow s}][u]-\flow[\RP][Z^n_s][u] &= \flow[\RP][Z^n_s][u] + \int_{Z^n_s}^{\widetilde{Z}^n_{u\leftarrow s}} \Jac[\RP][z][u] \dd z \\
	&\geq \flow[\RP][Z^n_s][u] + (C^\RP_J)^{-1} (\widetilde{Z}^n_{u\leftarrow s}-Z^n_s) ,
	\end{align*}
	where the last inequality follows from \eqref{eq:lowBoundJinv}, it comes that
	\begin{align*}
	\widetilde{Z}^n_{t\leftarrow s} \leq Z^n_s + C^\RP_J\int_s^t \psi_n\left( \flow[\RP][Z^n_s][u] + (C^\RP_J)^{-1} (\widetilde{Z}^n_{u\leftarrow s}-Z^n_s)  -L_u \right) \dd u,~t\in[s,T].
	\end{align*}
	As the solution of an RDE, there exists $C>0$ depending only on $\beta$ such that $\flow[\RP][y_0][s]$ satisfies (see \cite[Proposition 8.3]{FrizHairer}):
	\begin{align}\label{eq:boundRDEs}
	\|\flow[\RP][y_0][\cdot]\|_{\beta,[0,T]} \leq C \left\{ \left(\|\sigma\|_{\Cb^2} \vertiii{\RP}_{\beta,[0,T]}\right) \vee \left(\|\sigma\|_{\Cb^2} \vertiii{\RP}_{\beta,[0,T]}\right)^{\frac{1}{\beta}} \right\} =: C_{\sigma,\RP,\beta}.
	\end{align}
	It follows that for any $t\in[s,T]$,
	\begin{align*}
	\widetilde{Z}^n_{t\leftarrow s} &\leq Z^n_s + C^\RP_J\int_s^t \psi_n\left(Y^n_s -C_{\sigma,\RP,\beta} (u-s)^\beta + (C^\RP_J)^{-1} (\widetilde{Z}^n_{u \leftarrow s}-Z^n_s) -L_u \right) \dd u \\
	&\leq Z^n_s + C^\RP_J\int_s^t \psi_n\left(-(Y^n_s - L_s)_- -C_{\sigma,\RP,\beta}  (u-s)^\beta + (C^\RP_J)^{-1} (\widetilde{Z}^n_{u \leftarrow s}-Z^n_s) -(L_u-L_s)  \right) \dd u.
	\end{align*}
	For each $s\in[0,T)$, consider now the solution $\overline{Z}^n_{\cdot\leftarrow s}$ of the ODE 
	\begin{align*}
	\overline{Z}^n_{t\leftarrow s} = -(Y^n_s - L_s)_- -C_{\sigma,\RP,\beta} (t-s)^\beta + \int_s^t \psi_n\left( \overline{Z}^n_{u\leftarrow s} -(L_u-L_s) \right) \dd u,~t\in[s,T] .
	\end{align*}
	It satisfies, for all $t\in[s,T]$, $\overline{Z}^n_{t\leftarrow s}\geq -(Y^n_s - L_s)_- -C_{\sigma,\RP,\beta} (t-s)^\beta + (C^\RP_J)^{-1} (\widetilde{Z}^n_{t\leftarrow s}-Z^n_s)$ (by the comparison principle of ODEs). By Lemma~\ref{lem:deterministic}, $\overline{Z}^n_{\cdot\leftarrow s}$ satisfies:
	\begin{align*}
	|\overline{Z}^n_{t\leftarrow s}+(Y^n_s - L_s)_- - (L_t-L_s)| \leq \sqrt{26} \left(C_{\sigma,\RP,\beta} (t-s)^\beta + \sup_{u\in[s,t]}|L_u-L_s| \right), \quad \forall t\in[s,T].
	\end{align*}
	Hence the previous bound yields, for any $s\in[0,T)$ and $t\in[s,T]$,
	\begin{align*}
	Z^n_t \leq \widetilde{Z}^n_{t\leftarrow s} &\leq Z^n_s + C^\RP_J \left( \overline{Z}^n_{t\leftarrow s} +(Y^n_s - L_s)_- +C_{\sigma,\RP,\beta} (t-s)^\beta \right),
	\end{align*}
	and this bound implies easily \ref{item:comparisoni}.

	The second assertion \ref{item:comparisonii} is now derived using the inequality below, \ref{item:comparisoni} and \eqref{eq:boundRDEs}:
	\[
	|Y^n_t - Y^n_s| \leq \sup_{y\in\R}\|\flow[\RP][y][\cdot]\|_{\beta,[0,T]}(t-s)^\beta + C^\RP_J |Z^n_t-Z^n_s| .
	\]
\end{proof}

\begin{remark}
	In the previous proof, we did not estimate directly the Hölder regularity of 
	$\int_0^\cdot \sigma(\flow[\RP][Z^n_s][s]) \dd \RP_s$, since a standard \emph{a priori} estimate would depend on $n$. This question will be treated in the next subsection.
\end{remark}

~

\begin{proposition}\label{lem:convYX} \hspace{2em}
	\begin{enumerate}[label = (\roman*)]
		\item Let the notations and assumptions of Theorem~\ref{th:RfSDE} be in force. Then the sequences of paths $(Z^n)_{n\in\N}$ and $(Y^n)_{n\in\N}$ defined in \eqref{eq:def_XY} are non-decreasing with \(n\). Besides, 
		\begin{equation*}
		\sup_{n\in\N} \sup_{t\in[0,T]} |Z^n_t| < +\infty \quad\text{and} \quad \sup_{n\in\N} \sup_{t\in[0,T]} |Y^n_t| < +\infty .
		\end{equation*}
		\item Now let the assumptions of Theorem~\ref{th:RfSDE_Gaussian} be in force. Then the previous conclusions hold in the almost sure sense and moreover, for any $\gamma\geq 1$,
		\begin{equation*}
		\EE\left[\sup_{n\in\N} \sup_{t\in[0,T]} |Z^n_t|^\gamma\right] < +\infty \quad\text{and} \quad \EE\left[\sup_{n\in\N} \sup_{t\in[0,T]} |Y^n_t|^\gamma\right] < +\infty .
		\end{equation*}
	\end{enumerate}
\end{proposition}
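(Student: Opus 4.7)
The plan is to derive (i) pathwise, combining monotonicity of $(Z^n)_{n\in\N}$ in $n$ via ODE comparison with the uniform-in-$n$ bound from Lemma~\ref{lem:comparison}; part (ii) will then follow by checking that the random constants appearing in that bound belong to every $L^\gamma(\Omega)$.

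For the monotonicity in $n$, the random vector field $W^n(t,z) = \Jac[\RP][z][0][t]\,\psi_n(\flow[\RP][z][t]-L_t)$ governing the ODE \eqref{eq:def_XY} for $Z^n$ is Lipschitz in $z$, uniformly in $t\in[0,T]$: the maps $z\mapsto\flow[\RP][z][t]$ and $z\mapsto\Jac[\RP][z][0][t]$ are Lipschitz uniformly in $t$ (since $\sigma\in\Cb^4$), both are bounded by $C^\RP_J$ via \eqref{eq:boundUJJinv}, and $\psi_n\in\Cb^\infty$. Since $\Jac[\RP][z][0][t]>0$ by \eqref{eq:lowBoundJinv} and $0\leq\psi_n\leq\psi_{n+1}$ by \eqref{eq:hyp_psin}, one has $W^n(t,z)\leq W^{n+1}(t,z)$ pointwise. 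Therefore $Z^{n+1}$ is a super-solution of $\dot z=W^n(\cdot,z)$ sharing the initial condition $y_0$ with $Z^n$, and the classical ODE comparison principle yields $Z^n\leq Z^{n+1}$ on $[0,T]$. Since the one-dimensional flow $z\mapsto\flow[\RP][z][t]$ has everywhere positive derivative $(\Jac[\RP][z][0][t])^{-1}$, it is strictly increasing, whence $Y^n_t=\flow[\RP][Z^n_t][t]\leq\flow[\RP][Z^{n+1}_t][t]=Y^{n+1}_t$.

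Applying Lemma~\ref{lem:comparison} at $s=0$ then immediately yields (i): because $Y^n_0=y_0\geq L_0$, the penalisation term $(Y^n_0-L_0)_-$ vanishes in both items of that lemma, and (combining item (i) with the monotonicity $Z^n_0\leq Z^n_t$) we obtain, for some $C$ depending only on $\sigma,\beta,T$ and \emph{independent of $n$},
\begin{equation*}
y_0\leq Z^n_t\leq y_0+C\,C^\RP_J\,\Theta \quad\text{and}\quad |Y^n_t-y_0|\leq C\,(C^\RP_J)^2\,\Theta,
\end{equation*}
where $\Theta:=(\vertiii{\RP}_{\beta,[0,T]}\vee\vertiii{\RP}_{\beta,[0,T]}^{1/\beta})T^\beta+\|L-L_0\|_{\infty,[0,T]}$. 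For (ii) the same arguments go through $\omega$-by-$\omega$, and the right-hand sides of the above display become random variables; it then suffices to note that $C^\RP_J$, $\vertiii{\RP}_\beta$ and $\|L\|_\alpha$ all lie in $\bigcap_{\gamma\geq 1}L^\gamma(\Omega)$. In the Young regime $\beta>\tfrac12$ this follows from Fernique's theorem applied to the Gaussian path $X$ in $\Cbeta$; in the rough regime \eqref{hyp:XGauss}, integrability of $\vertiii{\RP}_\beta$ comes from the Gaussian rough-path construction of Cass--Hairer--Litterer--Tindel and $C^\RP_J\in L^\gamma$ from the Cass--Litterer--Lyons Jacobian bound recalled after \eqref{eq:boundUJJinv}, while $\|L\|_\alpha\in L^\gamma$ is a direct hypothesis; Hölder's inequality then yields the moment bounds on $\sup_{n,t}|Z^n_t|$ and $\sup_{n,t}|Y^n_t|$. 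No genuine obstacle is expected: the main task is bookkeeping---verifying that the constants in Lemma~\ref{lem:comparison} do not depend on $n$ (which they don't, by design) and that the Lipschitz bound on $W^n$ required for the ODE comparison is uniform in $t\in[0,T]$, both of which are immediate from the regularity of $\sigma$ and the Jacobian bounds \eqref{eq:boundUJJinv}--\eqref{eq:lowBoundJinv}.
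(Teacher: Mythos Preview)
Your argument is correct and follows essentially the same route as the paper: monotonicity in $n$ via the ODE comparison principle (using $\Jac[\RP][z][0][t]>0$ and $\psi_n\leq\psi_{n+1}$), strict monotonicity of the flow to transfer this to $Y^n$, then Lemma~\ref{lem:comparison} at $s=0$ with $(y_0-L_0)_-=0$ for the uniform bounds, and finally the standard integrability of $C^\RP_J$, $\vertiii{\RP}_\beta$, $\|L\|_\alpha$ for the Gaussian moments. Two harmless slips: you write ``$\psi_n\in\Cb^\infty$'' whereas \eqref{eq:hyp_psin} only gives $\psi_n'\in\Cb^\infty$ (which is what you actually need for the Lipschitz bound on $W^n$), and your symbol $\Theta$ clashes with the quantity defined later in Lemma~\ref{lem:unifBoundRYn}.
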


\begin{proof}
	$(i)$ In view of \eqref{eq:lowBoundJinv} and the fact that $\psi_n\leq \psi_{n+1}$, it follows from the comparison theorem for ODEs that $Z^n\leq Z^{n+1}$. Besides, the mapping $z\mapsto \flow[\RP][z]$ is increasing since its derivative is $\Jac[\RP][z]$ which, similarly to \eqref{eq:lowBoundJinv}, is positive. Hence $Y^n\leq Y^{n+1}$.

	Moreover, applying Lemma~\ref{lem:comparison}\ref{item:comparisoni} with $s=0$ yields 
	\begin{align}\label{eq:boundZn}
	Z^n_t \leq y_0 + C~C^\RP_J~\left((\vertiii{\RP}_{\beta,[0,T]} \vee \vertiii{\RP}_{\beta,[0,T]}^{\frac{1}{\beta}})t^\beta + \sup_{u\in[0,t]}|L_u-L_0| \right) ,\quad t\in[0,T].
	\end{align}
Besides, since $Z^n$ is non-decreasing, it follows that \(Z^n_t\geq y_0\), hence $\sup_{n\in\N} \sup_{t\in[0,T]} |Z^n_t| < +\infty$. To prove the second part of claim \emph{(i)}, we apply Lemma~\ref{lem:comparison}\ref{item:comparisonii} with $s=0$ to obtain
	\begin{align}\label{eq:boundYn}
	|Y^n_t| 
	&\leq |y_0| + C~(C^\RP_J)^2~\left((\vertiii{\RP}_{\beta,[0,T]} \vee \vertiii{\RP}_{\beta,[0,T]}^{\frac{1}{\beta}}) t^\beta + \sup_{u\in[0,t]}|L_u-L_0|\right) .
	\end{align}
	
	$(ii)$ Now if $X$ is a Gaussian process satisfying the assumptions of Theorem~\ref{th:RfSDE_Gaussian}, it suffices to use \eqref{eq:boundZn} and \eqref{eq:boundYn}, as well as the following estimates: for any $\gamma\geq 1$,
	\begin{align}\label{eq:boundsGaussL}
	&\EE\left[ \vertiii{\RP}_{\beta,[0,T]}^\gamma \right] <\infty ,\quad \EE\left[ (C^\RP_J)^\gamma \right]<\infty ~~\text{ and }~~\EE\left[\|L\|_{\beta,[0,T]}^\gamma \right]<\infty
	\end{align}
	where the first bound is a classical consequence of Kolmogorov's continuity theorem (which follows from \eqref{hyp:XGauss} for any $\beta  <\tfrac{1}{2r}$), the second one is~\cite[Theorem 6.5]{CassLittererLyons} and the third one was an assumption in Theorem~\ref{th:RfSDE_Gaussian}. Then Claim \emph{(ii)} holds true.
\end{proof}

\subsection{Uniform (in $n$) \emph{a priori} estimates on the sequence of penalised processes}

For any $p\geq 1$ and any $x\geq 0$, recall that $\phi_p(x) = x\vee x^p$, and define the control functions
\begin{align*}
\forall (s,t)\in \mathcal{S}_{[0,T]},\quad &\kappab_{\RP}(s,t):= \vertiii{\RP}_{\pv,[s,t]}^p ,\\
&\kappab_{\RP,K^n}(s,t) := \kappab_{\RP}(s,t) + \left(\delta K^n_{s,t}\right)^p ,
\end{align*}
where
\begin{align}
K^n_t := \int_0^t \psi_n(Y^n_s-L_s) \dd s,\quad \forall t\in[0,T],
\end{align}
denotes the penalisation term in \eqref{eq:penalized_smoothed_fSDE}.

We now obtain estimates on $Y^n$ and $\Rsn$ which are uniform in $n$. These will be crucial in the limiting procedure that leads to the proof of Theorem~\ref{th:RfSDE}.

\begin{lemma}\label{lem:unifBoundRYn}
	$(i)$ Under the assumptions of Theorem~\ref{th:RfSDE}, one has
	\begin{align}\label{eq:boundYnRYn}
	\Theta := \sup_{n\in\N} \left( \|\sigma'(Y^n)\sigma(Y^n)\|_{\pv,[0,T]} + \|\Rsn\|_{\frac{p}{2},[0,T]} \right) <\infty .
	\end{align}
	$(ii)$ If in addition, the assumptions of Theorem~\ref{th:RfSDE_Gaussian} hold, then $\EE\left( \Theta^\gamma \right)<\infty$, for any $\gamma\geq 1$.
\end{lemma}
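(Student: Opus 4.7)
The plan is to derive a self-improving estimate for the controlled rough path norm of $\sigma(Y^n)$ on sufficiently small sub-intervals and then patch these bounds together. Since $Y^n$ is controlled by $X$ with Gubinelli derivative $\sigma(Y^n)$ and $\sigma \in \mathcal{C}_b^4$, a second-order Taylor expansion shows that $\sigma(Y^n)$ is itself controlled by $X$ with Gubinelli derivative $\sigma'(Y^n)\sigma(Y^n)$, and the remainder satisfies the pointwise bound $|\Rsn_{s,t}| \leq \|\sigma'\|_\infty |\Rn_{s,t}| + \tfrac{1}{2}\|\sigma''\|_\infty |\delta Y^n_{s,t}|^2$ on $\mathcal{S}_{[0,T]}$. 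The penalised equation \eqref{eq:penalized_smoothed_fSDE} further yields $\Rn_{s,t} = \delta K^n_{s,t} + \sigma'(Y^n_s)\sigma(Y^n_s)\mathbb{X}_{s,t} + \mathcal{E}^n_{s,t}$, where $\mathcal{E}^n$ is the rough integral defect controlled by Theorem~\ref{th:RI}:
\[
|\mathcal{E}^n_{s,t}| \leq C_p \bigl(\|X\|_{\pv,[s,t]}\|\Rsn\|_{\ppv,[s,t]} + \|\mathbb{X}\|_{\ppv,[s,t]}\|\sigma'(Y^n)\sigma(Y^n)\|_{\pv,[s,t]}\bigr).
\]
Combined with the uniform boundedness of $Y^n$ from Proposition~\ref{lem:convYX} and the Lipschitz continuity of $\sigma'\sigma$, these ingredients produce a closed, self-referential inequality for the interval functional $\Theta^n(s,t) := \|\sigma'(Y^n)\sigma(Y^n)\|_{\pv,[s,t]} + \|\Rsn\|_{\ppv,[s,t]}$ of the schematic form $\Theta^n(s,t)^{p/2} \leq C \bigl(\kappab_{\RP,K^n}(s,t) + \kappab_\RP(s,t)^{1/2}\,\Theta^n(s,t)^{p/2}\bigr)$.

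Next, I would absorb the self-referential term by restricting to sub-intervals on which $C \kappab_\RP(s,t)^{1/2} \leq \tfrac{1}{2}$. The super-additivity of $\kappab_\RP$ guarantees a subdivision $0 = t_0 < \cdots < t_N = T$ realising this condition, with $N$ depending only on $\kappab_\RP(0,T)$ and, crucially, not on $n$. On each $[t_i,t_{i+1}]$ the absorption gives $\Theta^n(t_i,t_{i+1})^{p/2} \leq 2C \kappab_{\RP,K^n}(t_i,t_{i+1})$; the super-additivity of both sides (together with the discrete Minkowski-type inequality for $p$-variation norms) then allows to patch these local estimates into a global bound $\Theta^n(0,T) \leq C'\,\phi_p\bigl(\kappab_{\RP,K^n}(0,T)^{1/p}\bigr)$.

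The main obstacle is that the right-hand side of the previous bound still involves $(K^n_T)^p$ through $\kappab_{\RP,K^n}(0,T)$, and uniform boundedness of $K^n_T$ in $n$ is not yet established, since the naive pointwise estimate of Lemma~\ref{lem:deterministic}(ii) grows like $n^{1-\beta}$. I would close the loop using the identity $K^n_T = Y^n_T - y_0 - \int_0^T \sigma(Y^n_u)\dd \RP_u$ together with $\sup_n \|Y^n\|_\infty < \infty$ from Proposition~\ref{lem:convYX}: Theorem~\ref{th:RI} bounds the rough integral above by an absolute constant plus a term of order $\Theta^n(0,T) \, \vertiii{\RP}_{\pv,[0,T]}^{1/2}$, and substituting the global estimate yields a self-consistent inequality of the form $K^n_T \leq C_1 + C_2 \vertiii{\RP}_{\pv,[0,T]}^\alpha \bigl(1 + (K^n_T)^\gamma\bigr)$ with some $\gamma < 1$. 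This delivers a uniform bound on $K^n_T$ and hence on $\Theta^n(0,T)$, proving (i). For part (ii), it suffices to track the dependence of all the constants above: they are polynomial in $\vertiii{\RP}_{\beta,[0,T]}$, $C^\RP_J$ and $\|L\|_\alpha$, whose moments of every order are finite by \eqref{eq:boundsGaussL}, so that $\EE[\Theta^\gamma] < \infty$ for every $\gamma \geq 1$.
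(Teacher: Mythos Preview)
Your local self-improving estimate and patching over a subdivision of length $N\sim \kappab_\RP(0,T)$ are essentially the paper's Lemma~\ref{lem:RDeltaSig}\ref{item:i}--\ref{item:i'}, and that part is fine. The genuine gap is in the final step, where you claim a self-consistent inequality $K^n_T \leq C_1 + C_2\,\vertiii{\RP}^\alpha\bigl(1+(K^n_T)^\gamma\bigr)$ with $\gamma<1$. This exponent is not achievable from the bounds you have. Your global estimate reads $\Theta^n(0,T)\leq C'\,\phi_p\bigl(\kappab_{\RP,K^n}(0,T)^{1/p}\bigr)$, and since $\kappab_{\RP,K^n}(0,T)=\vertiii{\RP}_{p}^p+(K^n_T)^p$, for large $K^n_T$ one has $\phi_p(\kappab^{1/p})=\kappab\sim (K^n_T)^p$. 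Even with the sharper local bound $\|\Rsn\|_{p/2}^{p/2}\leq C\,\kappab_{\RP,K^n}$ on each small piece (so $\|\Rsn\|_{p/2}\lesssim (K^n_T)^2$), substituting into $K^n_T = Y^n_T - y_0 - \int_0^T \sigma(Y^n)\,d\RP$ and using Theorem~\ref{th:RI} yields at best $K^n_T \leq C + C\,(K^n_T)^2$, i.e.\ $\gamma\geq 2$. Such an inequality is vacuous for large $K^n_T$ and cannot deliver a uniform bound; there is no mechanism in your argument that produces a sublinear power of $K^n_T$.

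The paper closes this loop by a completely different route that bypasses any self-consistent inequality: it uses the Doss--Sussmann representation of Proposition~\ref{prop:globalExistence}. From $\dot Z^n_t = \Jac[\RP][Z^n_t][0][t]\,\psi_n(Y^n_t-L_t)$ and the two-sided bound \eqref{eq:boundUJJinv} one gets directly
\[
\delta K^n_{s,t} = \int_s^t \psi_n(Y^n_u-L_u)\,du \leq C^\RP_J\,\delta Z^n_{s,t},
\]
and the monotonicity $Z^n\leq Z$ together with the explicit bound \eqref{eq:boundZn} on $Z$ (both from Proposition~\ref{lem:convYX}) give $\sup_n K^n_T \leq C^\RP_J\,\delta Z_{0,T}<\infty$ with no circularity. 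This is precisely the missing ingredient: the uniform control on $K^n_T$ comes from the flow representation, not from the rough-path \emph{a priori} estimates themselves. Once $\sup_n K^n_T<\infty$ is known, your patching argument (equivalently, the paper's \eqref{eq:boundRsigman0}) immediately yields $(i)$, and $(ii)$ follows since $C^\RP_J$, $\delta Z_{0,T}$ and $\delta_\RP^{-1}$ all have moments of every order under \eqref{hyp:XGauss}.
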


\begin{proof}
This proof relies strongly on the \emph{a priori} estimates obtained in Lemma~\ref{lem:RDeltaSig}. Notice that this lemma applies here since we have established in Proposition~\ref{prop:globalExistence} that $Y^n$ is controlled by $X$. Hence by Lemma~\ref{lem:RDeltaSig}\ref{item:i}, one gets that for any $(s,t)\in\mathcal{S}_{[0,T]}$,
	\begin{align}\label{eq:boundYnpv}
	\|Y^n\|_{p,[s,t]}^p \leq C ~\phi_p\left(\kappab_{\RP,K^n}(s,t) \right) ,
	\end{align}
	for some $C>0$ that depends only on $p$ and $\sigma$.

Now from Lemma~\ref{lem:RDeltaSig}\ref{item:i'}, one gets there exists $\delta_{\RP}>0$ with the property that for any $(s,t)\in\mathcal{S}_{[0,T]}$ such that $|t-s|\leq \delta_\RP$, 
	\begin{align*}
	\|\Rsn\|_{\ppv,[s,t]}^\ppv \leq C ~\kappab_{\RP,K^n}(s,t).
	\end{align*}
	Then for any $(s,t)\in\mathcal{S}_{[0,T]}$, 
	\begin{align}\label{eq:decompRsn}
	\|\Rsn\|_{\frac{p}{2},[s,t]}^{\frac{p}{2}} &= \sup_{\pi = (t_i)\subset [s,t]} \left(\sum_{t_{i+1}-t_i\leq \delta_\RP} |\Rsn_{t_i, t_{i+1}}|^{\frac{p}{2}} + \sum_{t_{i+1}-t_i> \delta_\RP} |\Rsn_{t_i, t_{i+1}}|^{\frac{p}{2}}  \right) \nonumber\\
	&\leq C ~\kappab_{\RP,K^n}(s,t) + \sup_{\pi = (t_i)\subset[s,t]} \sum_{t_{i+1}-t_i> \delta_\RP} |\Rsn_{t_i, t_{i+1}}|^{\frac{p}{2}} .
	\end{align}
	By a standard rough path procedure, one gets that
	\begin{align*}
	\sum_{t_{i+1}-t_i> \delta_\RP} |\Rsn_{t_i, t_{i+1}}|^{\frac{p}{2}} \leq C \left(\frac{t-s}{\delta_\RP}\right)^{\frac{3p}{2}-2}\left( \kappab_{\RP,K^n}(s,t) +\|X\|_{p,[s,t]}^p\right),
	\end{align*}
	so that by using the above inequality in \eqref{eq:decompRsn}, it follows that
	\begin{align}\label{eq:boundRsigman0}
	\|\Rsn\|_{\frac{p}{2},[s,t]}^{\frac{p}{2}} \leq C \left(1+ \left(\frac{t-s}{\delta_\RP}\right)^{\frac{3p}{2}-2} \right) \kappab_{\RP,K^n}(s,t)  ,\quad \forall (s,t)\in\mathcal{S}_{[0,T]}.
	\end{align}
	In view of the definition \eqref{eq:def_XY} of $Z^n$ and the bound \eqref{eq:boundUJJinv} on $J$ (recall also that $J$ is positive), one has
	\begin{align*}
	K^n_t - K^n_s &= \int_s^t \Jac[\RP][Z^n_u][u] \Jac[\RP][Z^n_u][0][u] \psi_n(Y^n_u - L_u) \dd u \leq C^\RP_J \left( Z^n_t-Z^n_s \right).
	\end{align*}
	Hence $\kappab_{\RP,K^n}(s,t) \leq (C^\RP_J \delta Z^n_{s,t})^p + \vertiii{\RP}_{\ppv,[s,t]}^p$ and since 
	$Z^n\leq Z^{n+1}$ (and $Z^n_0 = Z^{n+1}_0=y_0$), 
	\begin{align}\label{eq:boundRsigman}
	\|\Rsn\|_{\frac{p}{2},[0,T]}^{\frac{p}{2}} \leq C \left(1+\delta_\RP^{2-\frac{3p}{2}}\right) \left( (C^\RP_J \delta Z^n_{s,t})^p + \vertiii{\RP}_{\ppv,[s,t]}^p\right) .
	\end{align}
	Recall from Lemma~\ref{lem:RDeltaSig} that $\delta_\RP\geq C^{-1}\vertiii{\RP}_\beta^{-1/\beta} \wedge T>0$. Hence $(i)$ is proven.
	
	To prove $(ii)$, note that \eqref{eq:boundYnpv} and the observation of the previous paragraph imply that
	\begin{align*}
	\|Y^n\|_{p,[s,t]} \leq C \left( \phi_p\left(C^\RP_J Z_T\right) + \phi_p\left(\vertiii{\RP}_{\pv,[0,T]}\right)^{\frac{1}{p}}\right) .
	\end{align*}
	Hence, combined with inequalities \eqref{eq:boundZn} and \eqref{eq:boundsGaussL}, one gets that $\EE\left[ \sup_{n\in\N} \|Y^n\|_{\pv,[0,T]}^\gamma \right]<\infty$, $\forall \gamma\geq 1$. Using the regularity of $\sigma$, we deduce that $\EE\left[ \sup_{n\in\N}\|\sigma'(Y^n)\sigma(Y^n)\|_{\pv,[0,T]}^\gamma \right]<\infty$. 
	As ${\delta_\RP \geq C^{-1} \vertiii{\RP}_{\beta}^{-1/\beta} \wedge T}$, it follows that for any $\gamma\geq 1$,
	\begin{align}\label{eq:boundDeltaX}
	\EE[\delta_\RP^{(2-\frac{3p}{2})\gamma}]\leq C \EE\left[\left(\vertiii{\RP}_\beta^{\frac{1}{\beta}} \vee T^{-1}\right)^{(\frac{3p}{2}-2)\gamma}\right]<\infty . 
	\end{align}
	In view of \eqref{eq:boundRsigman} and the previous inequality, it follows that $\EE\left[ \sup_{n\in\N}\|\Rsn\|_{\pv,[0,T]}^\gamma \right]<\infty$.
\end{proof}

\subsection{Uniform (in $n$) continuity of the sequence of penalised processes}\label{subsec:unifcont}

\begin{corollary}\label{cor:unifHolder}
	Recall that $\beta = \tfrac{1}{p}$. The rough integral in \eqref{eq:penalized_smoothed_fSDE} is $\beta$-Hölder continuous on $[0,T]$, uniformly in $n$: there exists $C>0$ depending only on $\beta$, $T$, $\|\sigma\|_{\infty}$ and $\|\sigma'\|_\infty$ such that
	\begin{align*}
	\forall n\in \N,~\forall (s,t)\in\mathcal{S}_{[0,T]},\quad \big|\int_s^t \sigma(Y^n_u) \dd\RP_u \big| \leq C(1+\Theta) \left(  \|X\|_\beta + \|\mathbb{X}\|_{2\beta} \right)  |t-s|^\beta,
	\end{align*}
	where $\Theta$ was defined (independently of $n$) in \eqref{eq:boundYnRYn}. 
\end{corollary}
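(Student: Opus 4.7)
The corollary is essentially a direct application of the Friz--Shekhar bound (Theorem~\ref{th:RI}) combined with the uniform estimates of Lemma~\ref{lem:unifBoundRYn}. By Remark~\ref{rem:sYn}, $(Y^n, \sigma(Y^n)) \in \Vp_X$ with Gubinelli derivative $\sigma'(Y^n)\sigma(Y^n)$, so applying Theorem~\ref{th:RI} on an arbitrary subinterval $[s,t]\subseteq[0,T]$ yields
\begin{equation*}
\Big| \int_s^t \sigma(Y^n_u) \dd\RP_u - \sigma(Y^n_s)\delta X_{s,t} - \sigma'(Y^n_s)\sigma(Y^n_s)\mathbb{X}_{s,t} \Big| \leq C_p\Big( \|X\|_{\pv,[s,t]} \|\Rsn\|_{\ppv,[s,t]} + \|\mathbb{X}\|_{\ppv,[s,t]}\|\sigma'(Y^n)\sigma(Y^n)\|_{\pv,[s,t]}\Big).
\end{equation*}

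The main step is then to translate the $p$-variation factors involving $X$ and $\mathbb{X}$ into Hölder factors on $[s,t]$. Since $\beta = 1/p$, for any $f\in\Cbeta$ one has the elementary embedding
\begin{equation*}
\|f\|_{\pv,[s,t]}^p = \sup_\pi \sum_i |\delta f_{t_i,t_{i+1}}|^p \leq \|f\|_\beta^p \sup_\pi \sum_i (t_{i+1}-t_i)^{p\beta} = \|f\|_\beta^p (t-s),
\end{equation*}
so $\|X\|_{\pv,[s,t]} \leq \|X\|_\beta (t-s)^\beta$, and similarly $\|\mathbb{X}\|_{\ppv,[s,t]} \leq \|\mathbb{X}\|_{2\beta}(t-s)^{2\beta}$. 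For the two factors depending on $Y^n$, I would \emph{not} attempt to extract a power of $(t-s)$ (which would reintroduce $n$-dependence) but rather use super-additivity of the variation semi-norms to bound them on $[s,t]$ by their values on $[0,T]$, and then apply Lemma~\ref{lem:unifBoundRYn} to get $\|\Rsn\|_{\ppv,[s,t]} \leq \Theta$ and $\|\sigma'(Y^n)\sigma(Y^n)\|_{\pv,[s,t]} \leq \Theta$ uniformly in $n$.

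For the two local terms, the boundedness of $\sigma$ and $\sigma'$ gives $|\sigma(Y^n_s)\delta X_{s,t}| \leq \|\sigma\|_\infty \|X\|_\beta (t-s)^\beta$ and $|\sigma'(Y^n_s)\sigma(Y^n_s)\mathbb{X}_{s,t}| \leq \|\sigma\|_\infty \|\sigma'\|_\infty \|\mathbb{X}\|_{2\beta}(t-s)^{2\beta}$. Summing the four contributions and using $(t-s)^{2\beta} \leq T^\beta (t-s)^\beta$ to collapse every time-power into $(t-s)^\beta$ yields the claimed inequality, with a constant $C$ depending only on $\beta$, $T$, $\|\sigma\|_\infty$, $\|\sigma'\|_\infty$ and the universal constant $C_p$ from Theorem~\ref{th:RI}.

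There is no conceptual obstacle here: the work has already been done in Lemma~\ref{lem:unifBoundRYn}, which provides the uniform-in-$n$ control of the rough-path data attached to $\sigma(Y^n)$. The only care required is the bookkeeping of exponents, in particular the deliberate choice to leave $\|\Rsn\|_{\ppv,[s,t]}$ and $\|\sigma'(Y^n)\sigma(Y^n)\|_{\pv,[s,t]}$ as constants rather than trying to extract further Hölder regularity from them.
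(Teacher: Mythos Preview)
Your proof is correct and follows essentially the same route as the paper: apply the rough-integral bound of Theorem~\ref{th:RI} to $(\sigma(Y^n),\sigma'(Y^n)\sigma(Y^n))$, control the two $Y^n$-dependent factors uniformly in $n$ via $\Theta$ from Lemma~\ref{lem:unifBoundRYn}, and convert the $p$-variation of $X$ and $\mathbb{X}$ into H\"older factors using $\|X\|_{p,[s,t]}\leq \|X\|_\beta (t-s)^\beta$, $\|\mathbb{X}\|_{p/2,[s,t]}\leq \|\mathbb{X}\|_{2\beta}(t-s)^{2\beta}\leq T^\beta\|\mathbb{X}\|_{2\beta}(t-s)^\beta$. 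The only cosmetic difference is that the paper first phrases the bound as one on $\|\int_0^\cdot\sigma(Y^n_u)\dd\RP_u\|_{p,[s,t]}$ before specialising to the increment, whereas you work directly with the increment; the content is identical.
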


\begin{proof}
	In view of Proposition~\ref{prop:globalExistence} and Lemma~\ref{lem:unifBoundRYn}, Theorem~\ref{th:RI} implies that $\| \int_0^\cdot \sigma(Y^n_u) \dd \RP_u\|_{\pv,[s,t]}$ is bounded from above by some control function which is independent of $n$:
	\begin{align*}
	\| \int_0^\cdot \sigma(Y^n_u) \dd \RP_u\|_{\pv,[s,t]} &\leq \|\sigma(Y^n)\|_{\infty,[s,t]}\|X\|_{p,[s,t]} + \|\sigma'(Y^n)\sigma(Y^n)\|_{\infty,[s,t]} \|\mathbb{X}\|_{\ppv,[s,t]} \\
	&\quad + C_p\left(\|X\|_{p,[s,t]} \|\Rsn\|_{\frac{p}{2},[s,t]} + \|\mathbb{X}\|_{\ppv,[s,t]} \|\sigma'(Y^n)\sigma(Y^n)\|_{p,[s,t]}  \right) \\
	&\leq C \left(1+\Theta \right) \left( \|X\|_{p,[s,t]} + \|\mathbb{X}\|_{\frac{p}{2},[s,t]} \right) .
	\end{align*}
	To conclude the proof, it remains to notice that since $(X,\mathbb{X}) \in \Ccalbeta_g$, $\|X\|_{p,[s,t]} \leq \|X\|_\beta |t-s|^\beta$ and $\|\mathbb{X}\|_{\frac{p}{2},[s,t]} \leq \|\mathbb{X}\|_{2\beta} |t-s|^{2\beta} \leq T^\beta \|\mathbb{X}\|_{2\beta} (t-s)^\beta$.
\end{proof}

\begin{proposition}\label{prop:conv_neg_part}
	$(i)$ Under the assumptions of Theorem~\ref{th:RfSDE}, there exists $C>0$ which depends only on $p$ and $T$, such that for any $n\in\N^*$, 
	\[ \sup_{s\in[0,T]} (Y^n_s-L_s)_- \leq C\left(1+ (1+\Theta)\left(  \|X\|_\beta + \|\mathbb{X}\|_{2\beta} \right) + \|L\|_\beta \right) n^{-\beta},
	\]
	where $\Theta$ was defined in \eqref{eq:boundYnRYn}. In particular, $\displaystyle \lim_{n\rightarrow \infty} \sup_{s\in[0,T]} (Y^n_s-L_s)_- = 0$.\\
	$(ii)$ If in addition, the assumptions of Theorem~\ref{th:RfSDE_Gaussian} hold, then ${\displaystyle \lim_{n\rightarrow \infty} \EE\big[\sup_{s\in[0,T]} |(Y^n_s-L_s)_-|^\gamma \big] = 0}$, $\forall\gamma\geq 1$.
\end{proposition}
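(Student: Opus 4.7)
The strategy is to view the penalised equation \eqref{eq:penalized_smoothed_fSDE} as an instance of the scalar integral equation appearing in Lemma~\ref{lem:deterministic}, then combine the Hölder estimate on the rough integral (Corollary~\ref{cor:unifHolder}) with the pointwise bound $\psi_n(y)\geq -\tfrac12 + n\,y_-$ from \eqref{eq:hyp_psin} to convert control on $\psi_n(Y^n-L)$ into control on $(Y^n-L)_-$.

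More precisely, for the proof of $(i)$ I would write
\[ Y^n_t = y_0 + g^n_t + \int_0^t \psi_n(Y^n_s-L_s)\dd s, \qquad g^n_t := \int_0^t \sigma(Y^n_s)\dd\RP_s, \]
which fits the framework of Lemma~\ref{lem:deterministic} with $\Psi=1$, $\ell=L$, $f^n=Y^n$ and $f^n_0 = y_0 \geq L_0$. Corollary~\ref{cor:unifHolder} gives $\|g^n\|_\beta \leq C(1+\Theta)(\|X\|_\beta + \|\mathbb{X}\|_{2\beta})$, a bound independent of $n$. Lemma~\ref{lem:deterministic}(ii) then yields
\[ \psi_n(Y^n_t-L_t) \leq C\bigl(1 + \|L\|_\beta + (1+\Theta)(\|X\|_\beta+\|\mathbb{X}\|_{2\beta})\bigr)\, n^{1-\beta}, \]
uniformly in $t\in[0,T]$ and $n\in\N^*$.

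To convert this into an estimate on the negative part of $Y^n-L$, I would invoke the lower bound in \eqref{eq:hyp_psin}: $\psi_n(y)\geq (-\tfrac12 + n y_-)\vee 0$, whence $n(Y^n_t-L_t)_- \leq \psi_n(Y^n_t-L_t)+\tfrac12$. Dividing by $n$ and using $n^{-1}\leq n^{-\beta}$ for $n\geq 1$ (since $\beta<1$) gives the claimed rate. The statement $\sup_s(Y^n_s-L_s)_-\to 0$ then follows immediately.

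For part $(ii)$, the almost sure bound of part $(i)$ holds $\omega$-by-$\omega$, so it suffices to raise it to the power $\gamma$ and check that the multiplicative random constant has finite $\gamma$-th moment: $\EE[\Theta^\gamma]<\infty$ comes from Lemma~\ref{lem:unifBoundRYn}(ii), $\EE[(\|X\|_\beta + \|\mathbb{X}\|_{2\beta})^\gamma]<\infty$ follows from \eqref{eq:boundsGaussL} (noting $\|\mathbb{X}\|_{2\beta}\leq \vertiii{\RP}_\beta^2$), and $\EE[\|L\|_\beta^\gamma]<\infty$ is part of the hypotheses of Theorem~\ref{th:RfSDE_Gaussian}. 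Dominated convergence then gives $\EE\bigl[\sup_{s\in[0,T]}(Y^n_s-L_s)_-^\gamma\bigr]\to 0$. No step is really an obstacle here since the ingredients have all been prepared upstream; the only mild subtlety is to verify that $g^n$ indeed has the required uniform $\beta$-Hölder regularity at $n$ fixed large, which is exactly the content of Corollary~\ref{cor:unifHolder}.
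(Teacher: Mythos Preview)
Your proposal is correct and follows essentially the same route as the paper: apply Lemma~\ref{lem:deterministic}(ii) with $f^n=Y^n$, $g^n=\int_0^\cdot\sigma(Y^n_s)\dd\RP_s$, $\ell=L$, $\Psi=1$, use Corollary~\ref{cor:unifHolder} to bound $\|g^n\|_\beta$ uniformly in $n$, and invert via $(x)_-\le \tfrac1n\psi_n(x)+\tfrac1{2n}$; part (ii) is handled exactly as you describe. The only cosmetic remark is that in (ii) you do not actually need dominated convergence: the pointwise bound already gives $\EE[\sup_s(Y^n_s-L_s)_-^\gamma]\le C\,n^{-\beta\gamma}$ with $C$ a finite moment, so the convergence is immediate.
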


\begin{proof}
	$(i)$ Recall that $(x)_- \leq \tfrac{1}{n}\psi_n(x)+\tfrac{1}{2n}$. Applying Lemma~\ref{lem:deterministic}$(ii)$ and Corollary~\ref{cor:unifHolder}, one gets that
	\begin{align}\label{eq:boundNegPart}
	\forall n,\quad \sup_{s\in[0,T]} (Y^n_s-L_s)_- &\leq C \left(1 + \|L\|_\beta + \|\int_0^\cdot \sigma(Y^n_u) \dd\RP_u\|_\beta \right) n^{-\beta} +\frac{1}{2n}\nonumber\\
	&\leq C \left(1 + \|L\|_\beta + (1+\Theta)\left( \|X\|_\beta + \|\mathbb{X}\|_{2\beta} \right) \right) n^{-\beta} .
	\end{align}
	which is the desired result.
	
	$(ii)$ Using \eqref{eq:boundsGaussL} and Lemma~\ref{lem:unifBoundRYn} $(ii)$, one gets that $\EE[\left((1+\Theta)\left(  \|X\|_\beta + \|\mathbb{X}\|_{2\beta} \right)\right)^\gamma] <\infty$, $\forall \gamma\geq 1$, so the result follows from \eqref{eq:boundNegPart}.
\end{proof}

\begin{proposition}\label{prop:unifConvYn}
	$(i)$ Under the assumptions of Theorem~\ref{th:RfSDE}, $(Y^n)_{n\in\N}$ converges uniformly to some process $(Y_t)_{t\in[0,T]} \in \Cbeta$.\\ 
	$(ii)$ If in addition, the driving noise $X$ is Gaussian and the assumptions of Theorem~\ref{th:RfSDE_Gaussian} are satisfied, then the convergence happens in $L^\gamma\left(\Omega; (\mathcal{C}^0,\|\cdot\|_{\infty,[0,T]})\right)$ (i.e. as in \eqref{eq:convYnGauss}), for any $\gamma\geq 1$. Besides, $Y$ has a $\beta$-Hölder continuous modification and $\EE[\|Y\|_{\beta,[0,T]}^\gamma]<\infty$, for any $\gamma\geq 1$.
\end{proposition}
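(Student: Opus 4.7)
The plan is to exploit the monotone convergence of $(Y^n)_{n\in\N}$ established in Proposition~\ref{lem:convYX}, together with the uniform Hölder bounds from Lemma~\ref{lem:comparison}\ref{item:comparisonii} and the decay of the negative part from Proposition~\ref{prop:conv_neg_part}, in order to identify a $\beta$-Hölder continuous limit $Y$ and then upgrade pointwise convergence to uniform convergence via Dini's theorem.

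First I define $Y_t := \lim_{n\to\infty} Y^n_t$ pointwise on $[0,T]$; this limit exists by the monotonicity and uniform boundedness provided by Proposition~\ref{lem:convYX}. The central step, which is the main obstacle of the argument, is to pass to the limit in Lemma~\ref{lem:comparison}\ref{item:comparisonii} to derive a Hölder bound on $Y$. The difficulty is that the upper bound on $|Y^n_t - Y^n_s|$ involves the term $(Y^n_s - L_s)_-$, which depends on $n$; however, Proposition~\ref{prop:conv_neg_part} guarantees that this term vanishes uniformly in $s$ as $n\to\infty$, so passing to the limit gives
\begin{align*}
|Y_t - Y_s| \leq C (C^\RP_J)^2 \Bigl( (\vertiii{\RP}_{\beta,[0,T]} \vee \vertiii{\RP}_{\beta,[0,T]}^{1/\beta})(t-s)^\beta + \sup_{u\in[s,t]}|L_u - L_s| \Bigr).
\end{align*}
Under the assumptions of Theorem~\ref{th:RfSDE} in the rough case $\beta<\tfrac{1}{2}$, one has $\alpha > 1-\beta > \beta$, and therefore $\sup_{u\in[s,t]}|L_u-L_s| \leq \|L\|_\alpha T^{\alpha-\beta}(t-s)^\beta$, giving $Y\in \Cbeta$ (an analogous conclusion holds in the Young regime, where $L$ has at least the regularity of the driver).

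Since each $Y^n$ is continuous and $(Y^n)$ is a non-decreasing sequence of continuous functions converging pointwise to the continuous function $Y$ on the compact interval $[0,T]$, Dini's theorem yields the uniform convergence asserted in claim~$(i)$.

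For claim~$(ii)$, the sequence $\bigl(\sup_{t\in[0,T]}|Y_t - Y^n_t|\bigr)_{n\in\N}$ is non-increasing and nonnegative, converges almost surely to zero by $(i)$, and is dominated by $2\sup_{m\in\N}\sup_{t\in[0,T]}|Y^m_t|$, which has finite moments of all orders by Proposition~\ref{lem:convYX}(ii). Dominated convergence then produces the $L^\gamma$ convergence \eqref{eq:convYnGauss} for every $\gamma\geq 1$. The Hölder moments of $Y$ follow by inserting the Gaussian moment bounds \eqref{eq:boundsGaussL} (which control $\vertiii{\RP}_\beta$, $C^\RP_J$ and $\|L\|_\alpha$ in $L^\gamma(\Omega)$ for every $\gamma\geq 1$) into the explicit upper bound on $\|Y\|_{\beta,[0,T]}$ derived above, yielding $\EE[\|Y\|_{\beta,[0,T]}^\gamma]<\infty$.
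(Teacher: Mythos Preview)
Your proposal is correct and follows essentially the same approach as the paper's proof: pass to the limit in the increment bound of Lemma~\ref{lem:comparison}\ref{item:comparisonii} using Proposition~\ref{prop:conv_neg_part} to get continuity of the limit $Y$, apply Dini's theorem for uniform convergence, and then use dominated convergence (with the uniform bound from Proposition~\ref{lem:convYX}) for the $L^\gamma$ statement. Your write-up is in fact more explicit than the paper's, which compresses part~$(i)$ into two sentences; one small caveat is your parenthetical remark on the Young regime (``$L$ has at least the regularity of the driver''), which is not quite what the hypothesis $\alpha>\tfrac{1}{2}\vee(1-\beta)$ says, but the paper itself restricts attention to $\beta\in(\tfrac13,\tfrac12)$ where your inequality $\alpha>1-\beta>\beta$ is valid.
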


\begin{proof}
	$(i)$ 
	In view of the two inequalities of Lemma~\ref{lem:comparison}, we can now use Proposition~\ref{prop:conv_neg_part} to pass to the limit in the right-hand side of these inequalities, 
	to get that $Z$ and $Y$ are (Hölder-)continuous. Hence, arguing with Dini's Theorem, we now conclude that the convergences are uniform. \\
	$(ii)$ Under the assumptions of Theorem~\ref{th:RfSDE_Gaussian}, one deduces from the previous point that  almost surely, ${\lim_{n\rightarrow\infty}\|Y^n - Y\|_{\infty,[0,T]} = 0}$. Moreover, Proposition~\ref{lem:convYX} states that $(Y^n)_{n\in\N}$ is a non-decreasing sequence. Thus $ \|Y^n - Y\|_{\infty,[0,T]} \leq 2\|Y\|_{\infty,[0,T]}$, and since $\EE[\|Y\|_{\infty,[0,T]}^\gamma] <\infty$ (by Proposition~\ref{lem:convYX} $(ii)$), the convergence result is obtained by using Lebesgue's theorem.\\
	The Hölder continuity of $Y$ is a consequence of Lemma~\ref{lem:comparison}\ref{item:comparisonii} and Proposition~\ref{prop:conv_neg_part} $(ii)$.
\end{proof}

%%%%%%%%%%%%%%%%%%%%%%%%%%%%%%%%%%%%%%%%%%%

\subsection{Identification of the limit process $Y$}\label{subsec:identification}

\noindent\textbf{Step 1: convergence of the rough integral.}

\begin{proposition}\label{prop:convInt}
	The path $\sigma(Y)$ is controlled by $X$, more precisely $(\sigma(Y),\sigma'(Y)\sigma(Y))\in \Vp_X$, and the following convergence happens in $\mathcal{C}^0([0,T],\R)$:
	\begin{align*}
	\lim_{n\rightarrow \infty} \left\|\int_0^\cdot \sigma(Y^n_s)\dd \RP_s - \int_0^\cdot \sigma(Y_s)\dd \RP_s \right\|_{\infty,[0,T]} = 0.
	\end{align*}
\end{proposition}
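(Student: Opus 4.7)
\emph{Proof plan for Proposition~\ref{prop:convInt}.} The plan has two parts. First, I would identify $(\sigma(Y),\sigma'(Y)\sigma(Y))$ as a controlled rough path by passing to the limit in the corresponding structure for $Y^n$, relying on lower semi-continuity of the $p$- and $(p/2)$-variation norms together with the uniform bound $\Theta$ from Lemma~\ref{lem:unifBoundRYn}. Second, I would deduce the uniform convergence of the integrals via an interpolation trick that trades the available uniform $p$-variation bound against the uniform convergence $Y^n\to Y$ of Proposition~\ref{prop:unifConvYn}, in order to obtain genuine convergence in a slightly weaker $p'$-variation with $p'\in(p,3)$.

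For the first part, since $\sigma\in\Cb^4$ and $Y^n\to Y$ uniformly on $[0,T]$, both $\sigma(Y^n)\to\sigma(Y)$ and $\sigma'(Y^n)\sigma(Y^n)\to \sigma'(Y)\sigma(Y)$ converge uniformly. Setting $R^{\sigma(Y)}_{s,t}:=\delta\sigma(Y)_{s,t}-\sigma'(Y_s)\sigma(Y_s)\delta X_{s,t}$, the boundedness of $X$ gives the uniform convergence $R^{\sigma(Y^n)}\to R^{\sigma(Y)}$ on $\mathcal{S}_{[0,T]}$. For every fixed partition $\pi=(t_i)$ the sums $\sum_i|f_{t_i,t_{i+1}}|^p$ are continuous under pointwise convergence of $f$, so taking the supremum over $\pi$ yields lower semi-continuity of the $p$-variation; combined with Lemma~\ref{lem:unifBoundRYn} this gives $\|\sigma'(Y)\sigma(Y)\|_{p,[0,T]}\le\Theta$ and $\|R^{\sigma(Y)}\|_{p/2,[0,T]}\le\Theta$, so that $(\sigma(Y),\sigma'(Y)\sigma(Y))\in\Vp_X$.

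For the second part, set $Z^n:=\sigma(Y^n)-\sigma(Y)$ with Gubinelli derivative $(Z^n)':=\sigma'(Y^n)\sigma(Y^n)-\sigma'(Y)\sigma(Y)$ and remainder $R^{Z^n}:=R^{\sigma(Y^n)}-R^{\sigma(Y)}$, noting that $Z^n_0=(Z^n)'_0=0$ because $Y^n_0=Y_0=y_0$. For any $p'\in(p,3)$, the rough path $\RP$ lies in $\mathcal{V}^{p'}_g$ and $(Z^n,(Z^n)')\in\mathcal{V}^{p'}_X$, so Theorem~\ref{th:RI} applied on $[0,t]$ gives
\[
\Big|\int_0^t Z^n_u\,d\RP_u\Big|\le C_{p'}\Bigl(\|X\|_{p',[0,T]}\,\|R^{Z^n}\|_{p'/2,[0,T]}+\|\mathbb{X}\|_{p'/2,[0,T]}\,\|(Z^n)'\|_{p',[0,T]}\Bigr),
\]
the right-hand side being monotone in $t$ so that it controls the sup over $t$.

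The main obstacle, and the point of introducing $p'>p$, is that we only know $\|R^{Z^n}\|_{p/2}$ and $\|(Z^n)'\|_p$ are uniformly bounded (by $2\Theta$), not small. To obtain convergence I would exploit the interpolation inequality
\[
\|f\|_{p',[0,T]}\le\|f\|_{\infty,\mathcal{S}_{[0,T]}}^{\,1-p/p'}\,\|f\|_{p,[0,T]}^{\,p/p'},
\]
which follows from $\sum_i|f_{t_i,t_{i+1}}|^{p'}\le(\max_i|f_{t_i,t_{i+1}}|)^{p'-p}\sum_i|f_{t_i,t_{i+1}}|^p$ (and similarly at level $p'/2$). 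Applied to $(Z^n)'$ and to $R^{Z^n}$, the uniform convergences to zero established in Step~1 force $\|(Z^n)'\|_{p',[0,T]}\to 0$ and $\|R^{Z^n}\|_{p'/2,[0,T]}\to 0$. Plugging back into the rough-integral estimate yields $\sup_{t\in[0,T]}\big|\int_0^t\sigma(Y^n_u)\,d\RP_u-\int_0^t\sigma(Y_u)\,d\RP_u\big|\to 0$, concluding the proof.
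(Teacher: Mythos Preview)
Your proof is correct and follows essentially the same route as the paper's: pass to the limit in the controlled structure using lower semi-continuity of the variation norms together with the uniform bound $\Theta$, then upgrade uniform convergence plus bounded $p$-variation to convergence in $p'$-variation for some $p'\in(p,3)$, and finally apply the rough-integral bound \eqref{eq:boundRI} at level $p'$. The only cosmetic difference is that the paper delegates the interpolation step and the lower semi-continuity to references (\cite[Lemmas~5.12 and~5.27]{FrizVictoir}), whereas you supply the elementary inequality $\sum_i|f_{t_i,t_{i+1}}|^{p'}\le(\sup_i|f_{t_i,t_{i+1}}|)^{p'-p}\sum_i|f_{t_i,t_{i+1}}|^{p}$ directly.
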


\begin{proof}
First recall that $\sigma(Y^n)$ is controlled by $X$ and that its Gubinelli derivative is $\sigma'(Y^n) \sigma(Y^n)$ (Remark~\ref{rem:sYn}). 
By Proposition~\ref{prop:unifConvYn}, $\sigma'(Y)\sigma(Y)\in \Cbeta$, $\sigma'(Y^n) \sigma(Y^n)$ converges uniformly to $\sigma'(Y)\sigma(Y)$, and in view of Lemma~\ref{lem:unifBoundRYn}, it follows that $\|\sigma'(Y^n) \sigma(Y^n)-\sigma'(Y)\sigma(Y)\|_{p',[0,T]}\rightarrow 0$ for any $p'>p$ (by \cite[Lemma 5.27]{FrizVictoir}). Similarly, $\Rsn_{s,t}$ converges uniformly to $R^{\sigma(Y)}_{s,t} := \delta\sigma(Y)_{s,t} - \sigma'(Y_s) \sigma(Y_s)\delta X_{s,t}$, and it follows from Lemma~\ref{lem:unifBoundRYn} that $R^{\sigma(Y)} \in \mathcal{V}^{\frac{p}{2}}$ (by \cite[Lemma 5.12]{FrizVictoir}) and $\|R^{\sigma(Y^n)}-R^{\sigma(Y)}\|_{\frac{p'}{2},[0,T]}\to 0$, for any $p'>p$. Hence $(Y,Y') \in \Vp_X$. Due to Inequality \eqref{eq:boundRI} applied with $p'>p$ and the two previous convergences in $p'$-variation, $\|\int_0^\cdot \sigma(Y^n_s)\dd \RP_s - \int_0^\cdot \sigma(Y_s)\dd \RP_s \|_{p',[0,T]} \to 0$, so the result follows in uniform norm.
\end{proof}

A direct consequence of the previous proposition and of Proposition~\ref{prop:unifConvYn} is that $K^n$ converges (uniformly) to a limit path $K$ so that for any $t\in[0,T]$, $Y_t = y_0 + \int_0^t \sigma(Y_s) \dd\RP_s + K_t$. As a limit of non-decreasing paths, $K$ is non-decreasing. Hence the properties \ref{def:SP1} and \ref{def:SP3} of Definition~\ref{def:SP} are verified. \\

\noindent \textbf{Step 2: $Y\geq L$.} This is the result of Proposition~\ref{prop:conv_neg_part}. Thus property \ref{def:SP2} of Definition~\ref{def:SP} is satisfied.

\noindent\textbf{Step 3: points of increase of $K$.} By the uniform convergence of $K^n$ and the non-decreasing property of $K^n$ and $K$, it follows that $\dd K^n$ weakly converges towards $\dd K$ and since $Y^n$ converges uniformly to $Y$,
\begin{align*}
0\geq \int_0^t (Y^n_s-L_s) \psi_n(Y^n_s-L_s) \dd s= \int_0^t (Y^n_s-L_s) \dd K^n_s \rightarrow \int_0^t (Y_s-L_s) \dd K_s,
\end{align*}
where the last integral exists in the sense of Lebesgue-Stieltjes integrals, since $K$ is a non-decreasing path. Since $Y_s-L_s\geq 0$ (by the previous step) and $K$ is non-decreasing, it follows that $\int_0^t (Y_s-L_s) \dd K_s \geq 0$. Hence for any $t\in[0,T]$, $\int_0^t (Y_s-L_s) \dd K_s= 0$, which proves that the point \ref{def:SP4} is satisfied.

~

\emph{In view of the Steps $1$ to $3$, $(Y,K)$ satisfies the properties \ref{def:SP1}, \ref{def:SP2}, \ref{def:SP3} and \ref{def:SP4} of Definition~\ref{def:SP}. Hence $(Y,K)$ is a solution to $SP(\sigma,\RP,L)$. In addition, if $X$ is a Gaussian process satisfying Assumption \eqref{hyp:XGauss}, we obtained in this section the probabilistic estimates that prove Theorem~\ref{th:RfSDE_Gaussian}.}

\section{Uniqueness in the Skorokhod problem and rate of convergence of the sequence of penalised paths}\label{sec:rate}

\subsection{Uniqueness (Proof of Theorem \ref{th:uniqueness})}\label{subsec:uniqueness}

In the case $\beta>\tfrac{1}{2}$, the uniqueness of the reflected solution is due to \citet{Slominski}. In the case $\beta\leq\tfrac{1}{2}$, the uniqueness of the reflected RDE has been proven recently by \citet{DeyaEtAl}. The difference between our work and \cite{DeyaEtAl} is that they have a fixed boundary process $L\equiv 0$. Thanks to our Lemma~\ref{lem:RDeltaSig}, uniqueness still holds in case of a moving boundary with little change compared to \cite{DeyaEtAl}.

Let $(Y^1,K^1)$ and $(Y^2,K^2)$ be two solutions to $SP(\sigma,\RP,L)$, both with initial condition $y_0\geq L_0$. 
 By definition, $Y^1$ and $Y^2$ are controlled in $p$-variation. Since the main ingredients in this section are Lemma~\ref{lem:RDeltaSig} and the rough Gr\"onwall lemma of \cite{DeyaEtAl0}, which both involve control functions (see also Remark~\ref{rem:pvar}), the $p$-variation topology is a natural choice.
 Consider the operator $\Delta$ which acts on functionals of $Y^1$ and $Y^2$ as follows: for any $\Phi: \mathcal{C}([0,T];\R) \rightarrow \mathcal{C}([0,T];\R)$,
\begin{align*}
\Delta \Phi(Y) = \Phi(Y^1)-\Phi(Y^2).
\end{align*}
For instance, we shall write $\Delta Y_s = Y^1_s-Y^2_s, ~\Delta \sigma(Y)_s = \sigma(Y^1_s) - \sigma(Y^2_s)$ and also $\delta(\Delta Y)_{s,t} = \Delta (\delta Y_{s,t}) = Y^1_t - Y^1_s - Y^2_t + Y^2_s$, etc.\\
By linearity, $\Delta Y$ has Gubinelli derivative $\Delta \sigma(Y)$, i.e. $(\Delta Y,\Delta \sigma(Y)) \in \Vp_X$, and $R^{\Delta Y}_{s,t}:= \delta(\Delta Y)_{s,t} - \Delta\sigma(Y)_s\delta X_{s,t} \in \mathcal{V}^{\frac{p}{2}}$. Similarly, $\Delta \sigma(Y)$ has Gubinelli derivative $\Delta \sigma'\sigma(Y)$. Note that $R^{\Delta Y} = \Delta R^Y$ and that $R^{\Delta\sigma(Y)} = \Delta R^{\sigma(Y)}$.

Set $w_K:\mathcal{S}_{[0,T]} \rightarrow \R_+$, a control function associated to $(K^1,K^2)$, as follows
\begin{align*}
w_K(s,t) = \|(K^1,K^2)\|_{1\text{-var},[s,t]} = \delta K^1_{s,t} + \delta K^2_{s,t},
\end{align*}
and define the control functions $\kappab_{\RP,K}$ and $\widetilde{\kappab}_{\RP,K}$ by
\begin{equation}\label{eq:defKappaXwn}
\begin{split}
\forall (s,t)\in \mathcal{S}_{[0,T]},\quad \kappab_{\RP,K}(s,t) &:= \vertiii{\RP}_{\pv,[s,t]}^p + w_K(s,t)^p \\
\widetilde{\kappab}_{\RP,K}(s,t) &:= \sum_{j=1}^4 \kappab_{\RP,K}(s,t)^j.
\end{split}
\end{equation}

Without loss of generality, assume that there exists $\tau\in(0,T]$ such that for any $t\in[0,\tau]$, $Y^1_t \geq Y^2_t$. Observe that 
	\begin{align}\label{eq:deltaDeltaY}
	\delta(\Delta Y)_{s,t} + w_K(s,t) &= \int_s^t \Delta\sigma(Y)_u \dd\RP_u + \delta(\Delta K)_{s,t} + w_K(s,t) \nonumber\\
	&= \int_s^t \Delta\sigma(Y)_u \dd\RP_u + 2\delta K^1_{s,t} .
	\end{align}
	By applying Inequality \eqref{eq:boundRI}, one gets that for any $s<t\in[0,\tau]$,
	\begin{align*}
	|\int_s^t \Delta\sigma(Y)_u \dd\RP_u| \leq C \left\{ \Delta Y_s |\delta X_{s,t}| + \Delta Y_s |\mathbb{X}_{s,t}| +\|R^{\Delta\sigma(Y)}\|_{\ppv,[s,t]} \|X\|_{p,[s,t]} + \|\Delta\sigma'\sigma(Y)\|_{p,[s,t]}\|\mathbb{X}\|_{\ppv,[s,t]} \right\} .
	\end{align*}
	With $\delta_\RP>0$ as defined in Lemma~\ref{lem:RDeltaSig}, one now applies Lemma~\ref{lem:RDeltaSig}\ref{item:ii} and Inequality \eqref{eq:deltaDeltasig2} to get that for any $(s,t)\in \mathcal{S}_{[0,\tau]}$ such that $|t-s|\leq \delta_\RP$,
	\begin{align}\label{eq:boundIntDelta}
	|\int_s^t \Delta\sigma(Y)_u \dd\RP_u| \leq C \left( \|\Delta Y\|_{\infty,[s,t]} + w_K(s,t) \right) \left\{(1+\|\mathbb{X}\|_{\ppv,[s,t]}) 
	\phi_p\left( \widetilde{\kappab}_{\RP,K}(s,t)^{\frac{1}{p}} \right)
	\right\}.
	\end{align}
	Thus we get from \eqref{eq:deltaDeltaY} and \eqref{eq:boundIntDelta} that
	\begin{align*}
	\delta(\Delta Y)_{s,t} + w_K(s,t) &\leq C \left( \|\Delta Y\|_{\infty,[s,t]} + w_K(s,t) \right) (1+\|\mathbb{X}\|_{\ppv,[s,t]}) 
	\phi_p\left( \widetilde{\kappab}_{\RP,K}(s,t)\right)^{\frac{1}{p}}
	+ 2\delta K^1_{s,t} .
	\end{align*}
	We are now in a position to apply the rough Gr\"onwall lemma of \citet[Lemma 2.11]{DeyaEtAl0}\footnote{ Note that this lemma must apply to a non-negative function (here $\Delta Y$). In \cite{DeyaEtAl0}, the authors consider the function $|\Delta Y|$, but P. Gassiat brought to our attention
 that it suffices to assume $Y^1\geq Y^2$.} which reads:
	 for any $(s,t)\in \mathcal{S}_{[0,\tau]}$ such that $|t-s|\leq \delta_\RP$,
	\begin{align*}
	\|\Delta Y\|_{\infty,[s,t]} + w_K(s,t) \leq 2 \exp\left\{1\vee \left( C (1+\|\mathbb{X}\|_{\ppv,[s,t]})^p 
	\phi_p\left( \widetilde{\kappab}_{\RP,K}(s,t) \right) 
	\right)\right\} \left(\Delta Y_s + 2\delta K^1_{s,t}\right) .
	\end{align*}
	Assume now that there exists a sub-interval $(s,t) \subset [0,\tau]$ of length at most $\delta_\RP$ such that $\Delta Y_u>0$ for any $u\in(s,t)$ and $\Delta Y_s=0$. In that case, $Y^1_u>L_u$ and it follows that $\delta K^1_{s,u}=0$ for any $u\in(s,t)$. Hence $\|\Delta Y\|_{\infty,[s,t]} + w_K(s,t) \leq 0$ and this contradicts the existence of $s$ and $t$. Thus $\Delta Y=0$, and there is at most one solution to $SP(\sigma,\RP,L)$.

\subsection{Rate of convergence (Proof of Theorems \ref{th:rate} and \ref{th:rateGaussian})}

With the notations of the previous subsection, $Y^1$ now corresponds to $Y$ (the reflected path) and $Y^2$ corresponds to the penalised path $Y^n$. The operator $\Delta$ will now be denoted $\Delta_{n}$. For instance, recall that $R^{\Delta_n \sigma(Y)}_{s,t}:= \delta(\Delta_n \sigma(Y))_{s,t} - \Delta_n\sigma'\sigma(Y)_s\delta X_{s,t} \in \mathcal{V}^{\frac{p}{2}}$. 
%In this subsection, $\Delta_n$ stands for the operator $\Delta$ introduced in the previous subsection. We shall use other notations from this subsection, in particular the path $Y^1$ corresponds here to $Y$ (the reflected solution), and $Y^2$ is $Y^n$. Recall that $R^{\Delta_n \sigma(Y)}_{s,t}:= \delta(\Delta_n \sigma(Y))_{s,t} - \Delta_n\sigma'\sigma(Y)_s\delta X_{s,t} \in \mathcal{V}^{\frac{p}{2}}$.
Here, the control function associated to $(K,K^n)$ will be denoted by $w_n$ (instead of $w_K$ in Subsection~\ref{subsec:uniqueness}), 
while $\kappab_{\RP,w_n}$ and $\widetilde{\kappab}_{\RP,w_n}$ denote the control functions which correspond respectively to $\kappab_{\RP,K}$ and $\widetilde{\kappab}_{\RP,K}$ in Subsection~\ref{subsec:uniqueness}.

Denote by $\Pi_t(y)$ the projection on the epigraph of
\(L\), i.e. $\Pi_t(y)=y\vee L_t$.
Consider the control function defined by: $\forall (s,t)\in \mathcal{S}_{[0,T]}$,
\begin{align}\label{eq:defKappaBar}
\overline{\kappab}_{\RP,w_n}(s,t) := (1+\|\mathbb{X}\|_{\ppv,[s,t]})^p 
\phi_p\left( \widetilde{\kappab}_{\RP,w_n}(s,t)\right) .
\end{align}
Before stating a first technical result, we recall that in Lemma~\ref{lem:RDeltaSig}, it is proven that there exists a time $\delta_\RP>0$, and that for any $(s,t)\in \mathcal{S}_{[0,T]}$ such that $|t-s|\leq \delta_\RP$,
\begin{align}\label{eq:appliLemRDeltaSig}
	\|R^{\Delta_n\sigma(Y)}\|_{\ppv,[s,t]} \leq C \left\{w_n(s,t) + (w_n(s,t)+\|\Delta_n Y\|_{\infty,[s,t]}) 
	\phi_p\left(\widetilde{\kappab}_{\RP,w_n}(s,t)^{\frac{1}{p}} \right)
	\right\} .
\end{align}

\begin{lemma}\label{lem:bound1sec5}
	Denote by $\overline{\Theta}$ the quantity $C\left(1 +(1+\Theta)\left(\|X\|_\beta + \|\mathbb{X}\|_{2\beta} \right) + \|L\|_\beta\right)$ that appears in the upper bound of $\sup_{s\in[0,T]}(Y^n_s-L_s)_-$ in Proposition~\ref{prop:conv_neg_part}. There exists $C>0$ such that for any $n\in\N^*$, for any $(s,t)\in \mathcal{S}_{[0,T]}$ such that $|t-s|\leq \delta_\RP$, one has
	\begin{align*}
	\|Y-\Pi_\cdot(Y^n)\|_{\infty,[s,t]} &\leq 2 e^{1\vee \left( C \overline{\kappab}_{\RP,w_n}(s,t) \right)} \left( Y_s-\Pi_s(Y^n_s) + \overline{\Theta} n^{-\beta}  + 2\delta K_{s,t}\right) .
	\end{align*}
\end{lemma}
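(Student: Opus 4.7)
The strategy is to mimic the uniqueness argument of Subsection~\ref{subsec:uniqueness}, applied to the pair $(Y, Y^n)$ rather than to two solutions of the Skorokhod problem. A key observation is that Proposition~\ref{lem:convYX} gives $Y^n \leq Y^{n+1}$, so passing to the limit $Y^n \leq Y$, hence $\Delta_n Y := Y - Y^n \geq 0$; this non-negativity is crucial in order to use the version of the rough Gr\"onwall lemma \cite[Lem.~2.11]{DeyaEtAl0} adapted in Subsection~\ref{subsec:uniqueness}. I then plan to translate the resulting estimate, which is about $\Delta_n Y$, into one about $Y - \Pi_\cdot(Y^n)$, using the elementary identity $\Pi_t(Y^n_t) = Y^n_t + (Y^n_t - L_t)_-$ together with the $n^{-\beta}$ control of $(Y^n - L)_-$ provided by Proposition~\ref{prop:conv_neg_part}.

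For the first step, I would write, exactly as in \eqref{eq:deltaDeltaY},
\begin{equation*}
\delta(\Delta_n Y)_{s,t} + w_n(s,t) = \int_s^t \Delta_n\sigma(Y)_u \dd \RP_u + 2\,\delta K_{s,t},
\end{equation*}
where I use $\delta(\Delta_n K)_{s,t} = \delta K_{s,t} - \delta K^n_{s,t}$ and $w_n(s,t) = \delta K_{s,t} + \delta K^n_{s,t}$. I then estimate the rough integral via Theorem~\ref{th:RI} and Lemma~\ref{lem:RDeltaSig}\ref{item:ii}, together with \eqref{eq:appliLemRDeltaSig}, just as in \eqref{eq:boundIntDelta}. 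The algebraic identity $(1+\|\mathbb{X}\|_{\ppv,[s,t]})\,\phi_p\!\bigl(\widetilde{\kappab}_{\RP,w_n}(s,t)^{1/p}\bigr) = \overline{\kappab}_{\RP,w_n}(s,t)^{1/p}$ repackages the bound as
\begin{equation*}
\delta(\Delta_n Y)_{s,t} + w_n(s,t) \leq C\bigl(\|\Delta_n Y\|_{\infty,[s,t]} + w_n(s,t)\bigr)\,\overline{\kappab}_{\RP,w_n}(s,t)^{1/p} + 2\,\delta K_{s,t},
\end{equation*}
valid for $|t-s|\leq \delta_{\RP}$. Since $\Delta_n Y \geq 0$, the rough Gr\"onwall lemma \cite[Lem.~2.11]{DeyaEtAl0} applied on such intervals yields
\begin{equation*}
\|\Delta_n Y\|_{\infty,[s,t]} + w_n(s,t) \leq 2\,e^{1\vee(C\,\overline{\kappab}_{\RP,w_n}(s,t))}\bigl(\Delta_n Y_s + 2\,\delta K_{s,t}\bigr).
\end{equation*}

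For the second step, I use $\Pi_t(Y^n_t) = Y^n_t + (Y^n_t-L_t)_-$, which gives the two identities $Y_t - \Pi_t(Y^n_t) = \Delta_n Y_t - (Y^n_t-L_t)_-$ and $\Delta_n Y_s = (Y_s - \Pi_s(Y^n_s)) + (Y^n_s - L_s)_-$. From Proposition~\ref{prop:conv_neg_part}, the negative-part terms are bounded by $\overline{\Theta}\,n^{-\beta}$, and since $(Y^n-L)_- \geq 0$, the first identity gives $Y_t - \Pi_t(Y^n_t) \leq \Delta_n Y_t$, so $\|Y-\Pi_\cdot(Y^n)\|_{\infty,[s,t]} \leq \|\Delta_n Y\|_{\infty,[s,t]}$. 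Plugging $\Delta_n Y_s \leq (Y_s - \Pi_s(Y^n_s)) + \overline{\Theta}\,n^{-\beta}$ into the Gr\"onwall-type estimate delivers the claimed inequality. The only real obstacle is bookkeeping, namely checking that the exponential factor produced by \cite[Lem.~2.11]{DeyaEtAl0} is exactly $e^{1\vee(C\,\overline{\kappab}_{\RP,w_n}(s,t))}$ with the definition~\eqref{eq:defKappaBar}; this is precisely the same repackaging already carried out in the proof of Theorem~\ref{th:uniqueness}.
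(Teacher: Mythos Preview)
Your proposal is correct and follows essentially the same approach as the paper: mimic the uniqueness argument of Subsection~\ref{subsec:uniqueness} with the pair $(Y,Y^n)$, use Lemma~\ref{lem:RDeltaSig}\ref{item:ii} and \eqref{eq:deltaDeltasig2} to bound the rough integral, apply the rough Gr\"onwall lemma \cite[Lem.~2.11]{DeyaEtAl0} (exploiting $\Delta_n Y\geq 0$), and then convert the resulting bound on $\Delta_n Y$ into one on $Y-\Pi_\cdot(Y^n)$ via $\Pi_t(Y^n_t)-Y^n_t=(Y^n_t-L_t)_-\leq \overline{\Theta}\,n^{-\beta}$ from Proposition~\ref{prop:conv_neg_part}. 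The only cosmetic difference is that the paper leaves the repackaging identity $\phi_p(x)^{1/p}=\phi_p(x^{1/p})$ implicit when passing to $\overline{\kappab}_{\RP,w_n}(s,t)^{1/p}$, whereas you state it explicitly.
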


\begin{proof}
	Mimicking the beginning of the proof of the previous subsection (in particular Eq. \eqref{eq:deltaDeltaY} and \eqref{eq:boundIntDelta}), then applying Lemma~\ref{lem:RDeltaSig}\ref{item:ii} (which is exactly Equation \eqref{eq:appliLemRDeltaSig}) and Inequality \eqref{eq:deltaDeltasig2}, one gets that for any $(s,t)\in \mathcal{S}_{[0,T]}$ such that $|t-s|\leq \delta_\RP$,
	\begin{align*}
	|\int_s^t \Delta_n\sigma(Y)_u \dd\RP_u| \leq C \left( \|\Delta_n Y\|_{\infty,[s,t]} + w_n(s,t) \right) \left\{(1+\|\mathbb{X}\|_{\ppv,[s,t]}) 
	\phi_p\left( \widetilde{\kappab}_{\RP,w_n}(s,t)^{\frac{1}{p}} \right) 
	\right\}.
	\end{align*}
	and then 
	\begin{align*}
	\delta(\Delta_n Y)_{s,t} + w_n(s,t) &\leq C\left( \|\Delta_n Y\|_{\infty,[s,t]} + w_n(s,t) \right) \overline{\kappab}_{\RP,w_n}(s,t)^{\frac{1}{p}}+ 2\delta K_{s,t} .
	\end{align*}
	The rough Gr\"onwall lemma of \citet[Lemma 2.11]{DeyaEtAl0} now reads: $\forall (s,t)\in \mathcal{S}_{[0,T]}$ such that $|t-s|\leq \delta_\RP$,
	\begin{align*}
	\|\Delta_n Y\|_{\infty,[s,t]} + w_n(s,t) \leq 2 \exp\left\{1\vee \left( C \overline{\kappab}_{\RP,w_n}(s,t) \right)\right\} \left(\Delta_n Y_s + 2\delta K_{s,t}\right) .
	\end{align*}
	With the current notation, Proposition~\ref{prop:conv_neg_part} yields $\Pi_t(Y^n_t) -Y^n_t \leq \overline{\Theta} n^{-\beta}$, $\forall t\in[0,T]$. Since $Y-\Pi_\cdot(Y^n) \leq \Delta_n Y$, we obtain that for any $(s,t)\in\mathcal{S}_{[0,T]}$ such that $|t-s|\leq \delta_\RP$,
	\begin{align*}
	 \|Y-\Pi_\cdot(Y^n)\|_{\infty,[s,t]} &\leq 2 e^{1\vee \left( C \overline{\kappab}_{\RP,w_n}(s,t) \right)} \left( Y_s-\Pi_s(Y^n_s) + \Pi_s(Y^n_s)-Y^n_s  + 2\delta K_{s,t}\right) ,
	\end{align*}
	which gives the expected result.
\end{proof}

We now have all the ingredients to carry out the proofs of Theorems~\ref{th:rate} and \ref{th:rateGaussian}.
\begin{proof}[Proof of Theorem~\ref{th:rate}]
	Let us admit the following inequality, that will be proven in the second part of this proof
	\begin{equation}\label{eq:boundRate}
	\|Y-\Pi_\cdot(Y^n)\|_{\infty,[0,T]} \leq \exp\left\{2T\delta_\RP^{-1} + \left(C \overline{\kappab}_{\RP,w_n}(0,T) \right)\right\} (2 + T\delta_\RP^{-1} \overline{\Theta}) n^{-\beta}. 
	\end{equation}
	It is clear that the quantity $\overline{\kappab}_{\RP,K}(0,T) := \sup_{n\in\N}\overline{\kappab}_{\RP,w_n}(0,T)$ is finite (we give more details in the next proof, where $\RP$ is a Gaussian rough paths). Hence the Inequality \eqref{eq:boundRate} yields the desired result since
	\begin{align}\label{eq:finalBound}
	\|\Delta_n Y\|_{\infty,[0,T]} &\leq \|Y-\Pi_\cdot(Y^n)\|_{\infty,[0,T]} +\|\Pi_\cdot(Y^n)-Y^n\|_{\infty,[0,T]} \nonumber\\
	&\leq \exp\left\{2T\delta_\RP^{-1} + \left(C \overline{\kappab}_{\RP,K}(0,T) \right)\right\} (2 +\overline{\Theta} + T\delta_\RP^{-1} \overline{\Theta}) n^{-\beta} ,
	\end{align}
	using Proposition~\ref{prop:conv_neg_part} in the last inequality.
	
		Let us now prove \eqref{eq:boundRate}.%} 
	Consider first the case $s=0$ and $t\in (0,\delta_\RP)$: since $Y_0 = \Pi_0(Y^n_0) = y_0$, one can define 
	\begin{align*}
	t_0^n := T\wedge \inf\left\{t>0:~Y_{t} -\Pi_{t}(Y^n_{t}) = 2n^{-\beta}  \right\} .
	\end{align*}
	Of course, if $t_0^n = T$ then the proof is over. So let us assume that $t_0^n<T$ and define 
	\begin{align*}
	t_1^n := T\wedge \inf\left\{t>t_0^n:~Y_{t} -\Pi_{t}(Y^n_{t}) = n^{-\beta}  \right\} 
	\end{align*}
	and the mapping $\vartheta: [0,T) \rightarrow [0,T]$ associated to $\delta_{\RP}$ as follows:
	\begin{align*}
	\forall t\in [0,T), \quad \vartheta(t) = T\wedge (t+\delta_\RP).
	\end{align*}
	Notice that for any $u\in [t_0^n,t_1^n]$, $Y_u$ lies strictly above the boundary $L_u$ since $Y_u\geq n^{-\beta} + \Pi_u(Y^n_u)>L_u$. Hence  $\delta K_{t_0^n,t_1^n}=0$. It follows  from Lemma~\ref{lem:bound1sec5} that for any $u\in [t_0^n,t_1^n\wedge \vartheta(t_0^n)]$,
	\begin{align}\label{eq:bound0Y-PiYn}
	\|Y-\Pi_\cdot(Y^n)\|_{\infty,[t_0^n,u]} \leq 2e^{1\vee \left( C \overline{\kappab}_{\RP,w_n}(t_0^n,t_1^n\wedge \vartheta(t_0^n)) \right)}\left(2n^{-\beta} + \overline{\Theta} n^{-\beta} +0  \right) .
	\end{align}
	Then, we have one of the following possibilities:
	\begin{enumerate}
		\item if $t_1^n\wedge \vartheta(t_0^n)=T$, then rephrasing \eqref{eq:bound0Y-PiYn}, we get ${\|Y-\Pi_\cdot(Y^n)\|_{\infty,[t_0^n,T]} \leq 2e^{1\vee \left( C \overline{\kappab}_{\RP,w_n}(t_0^n,T) \right)} (2 + \overline{\Theta}) n^{-\beta}}$ and \eqref{eq:boundRate} is proven (recall that by definition, $\|Y-\Pi_\cdot(Y^n)\|_{\infty,[0, t_0^n]} \leq 2n^{-\beta}$);
		\item if $t_1^n\wedge \vartheta(t_0^n)=t_1^n <T$, then \eqref{eq:bound0Y-PiYn} now reads $${\|Y-\Pi_\cdot(Y^n)\|_{\infty,[t_0^n,t_1^n]} \leq 2e^{1\vee \left( C \overline{\kappab}_{\RP,w_n}(t_0^n,t_1^n) \right)}(2 + \overline{\Theta}) n^{-\beta}},$$
		which is smaller than the right-hand side of inequality \eqref{eq:boundRate}. Then, since $Y_{t_1^n} -\Pi_{t_1^n}(Y^n_{t_1^n}) = n^{-\beta}$, we can define 
		\begin{align*}
		t_{2}^n := T\wedge \inf\left\{t>t_1^n:~Y_{t} -\Pi_{t}(Y^n_{t}) = 2n^{-\beta}  \right\}
		\end{align*}
		and if $t_{2}^n < T$, define also
		\begin{align*}
		t_{3}^n := T\wedge \inf\left\{t>t_{2}^n:~Y_{t} -\Pi_{t}(Y^n_{t}) = n^{-\beta}  \right\} 
		\end{align*}
		and move to the next step (iterate);
		
		\item if $t_1^n\wedge \vartheta(t_0^n)=\vartheta(t_0^n) <T$, then as in \eqref{eq:bound0Y-PiYn}, we get $\|Y-\Pi_\cdot(Y^n)\|_{\infty,[t_0^n,\vartheta(t_0^n)]} \leq 2e^{1\vee \left( C \overline{\kappab}_{\RP,w_n}(t_0^n,\vartheta(t_0^n)) \right)}(2 + \overline{\Theta}) n^{-\beta}$. But now, we need to consider the times $\vartheta\circ\vartheta(t_0^n) \equiv \vartheta^2(t_0^n)$, $\vartheta^3(t_0^n), \dots, \vartheta^J(t_0^n)$, as long as $\vartheta^J(t_0^n) < T\wedge t_{1}^n$. By an immediate induction, we obtain that for such $J$,
		\begin{align*}
		\|Y-\Pi_\cdot(Y^n)\|_{\infty,[t_0^n,\vartheta^J(t_0^n)]} \leq 2^J \exp\left\{J\vee \left(C \sum_{j=0}^{J-1} \overline{\kappab}_{\RP,w_n}(\vartheta^j{t_0^n},\vartheta^{j+1}(t_0^n)) \right)\right\} (2 + J\overline{\Theta}) n^{-\beta} ,
		\end{align*}
		where we used $\vartheta^0(t_0^n) = t_0^n$. Note however that $\vartheta^J(t_0^n) = J\delta_\RP$, so that $J$ must be smaller than $T\delta_\RP^{-1}$. Besides, by the super-additivity property of $\overline{\kappab}_{\RP,w_n}$, one gets
		\begin{align*}
		\|Y-\Pi_\cdot(Y^n)\|_{\infty,[t_0^n,\vartheta^J(t_0^n)]} \leq 2^{T\delta_\RP^{-1}} \exp\left\{T\delta_\RP^{-1}\vee \left(C \overline{\kappab}_{\RP,w_n}(t_0^n,T) \right)\right\} (2 + T\delta_\RP^{-1} \overline{\Theta}) n^{-\beta},
		\end{align*}
		which is smaller than the right-hand side of \eqref{eq:boundRate}.
		
		To conclude this step, observe that if $\vartheta^{J+1}(t_{0}^n)=T$, then we proved \eqref{eq:boundRate}. While if $\vartheta^{J+1}(t_{0}^n)= t_{1}^n$, then one can move to point $2.$ and iterate.
	\end{enumerate}
	
	Following this construction, there exists an increasing sequence $(t_k^n)_{k\in\N} \in [0,T]^\N$ (possibly taking only finitely many different values) such that $\lim_{k\rightarrow \infty} t_{k}^n=T$ and for any $k\geq 0$, $\|Y-\Pi_\cdot(Y^n)\|_{\infty,[t_{2k}^n,t_{2k+1}^n]}$ is bounded by the right-hand side of inequality \eqref{eq:boundRate}. This achieves the proof of this theorem.
\end{proof}

\begin{proof}[Proof of Theorem~\ref{th:rateGaussian}]
	First, we provide a bound on $K_T$ where $(Y,K)$ is the solution of $SP(\sigma,\RP,L)$. Similarly, observe that $(L,0)$ is solution of $SP(0,0,L)$. We call Skorokhod mapping the function that takes any continuous paths $(z,l)$ and maps it to $(y,k)$, where $y=z+k$ is a path reflected on $l$. The Skorokhod mapping is Lipschitz continuous in the uniform topology (call $C_S$ the Lipschitz constant), see for instance Equations (2.1)-(2.2) in \cite{Slominski}. Thus one gets that
	\begin{align}\label{eq:boundKT}
	K_T = \|K\|_{\infty,[0,T]} &\leq C_S \|y_0 + \int_0^\cdot \sigma(Y_u) \dd\RP_u - L\|_{\infty,[0,T]} \nonumber\\
	&\leq C_S \left( C(1+\Theta) (\|X\|_\beta+\|\mathbb{X}\|_{2\beta})  + \|L\|_\beta \right)T^\beta
	\end{align}
	where we used Corollary~\ref{cor:unifHolder}. 
	But $\Theta$ depends linearly on $C^\RP_J$, and as already mentioned, this quantity may not have exponential moments. Since an exponential of $\Theta$ appears in Inequality \eqref{eq:finalBound}, this explains why we cannot get a simple upper bound for $\EE\left( \|\Delta_n Y\|_{\infty,[0,T]} \right)$. From Lemma~\ref{lem:unifBoundRYn} and the definition of $\overline{\Theta}$ in Lemma~\ref{lem:bound1sec5}, recall that for any $ \gamma\geq 1$, $\EE(\overline{\Theta}^\gamma) <\infty$. Moreover, we know from \eqref{eq:boundDeltaX} that $\EE(\delta_\RP^\gamma) <\infty$. Finally, since $K^n_T \leq K_T$, one gets from \eqref{eq:defKappaXwn} and \eqref{eq:defKappaBar} that 
	\begin{align*}
	\overline{\kappab}_{\RP,w_n}(0,T) \leq C\left(1+\|\mathbb{X}\|_{\ppv,[0,T]}^p \right) \left( \sum_{k=1}^4 
	\left(
	\vertiii{\RP}_{p,[0,T]}^{kp} +K_T^{kp}
	\right)
	\vee \sum_{k=1}^4 \left(\vertiii{\RP}_{p,[0,T]} ^{kp^2} +K_T^{kp^2}\right)\right) .
	\end{align*}
	Hence, in view of the bound \eqref{eq:boundKT} on $K_T$, there exists a random variable $\overline{\kappab}$ such that, for any $\gamma\geq 1$,
	\begin{align*}
	\EE\left( (\sup_{n\in\N}\overline{\kappab}_{\RP,w_n}(0,T))^\gamma \right) = \EE\left(\overline{\kappab}^\gamma \right) <\infty .
	\end{align*}
	Using \eqref{eq:finalBound}, it is now clear that for any $\gamma\geq 1$, 
		$\EE\left[  \log \left(1+\sup_{n\in\N} \big(n^\beta \|Y-Y^n\|_{\infty,[0,T]} \big)\right)^\gamma \right]  <\infty$.
\end{proof}

\section{Application: existence of a density for the reflected process}\label{sec:density}

In this last section, we aim at proving Theorem~\ref{th:density}.  Recall that $(\Omega,\mathcal{F},\PP)$ is a complete probability space. Thus let us consider the following simplified problem (compared to \eqref{eq:reflected_fSDE}), with constant diffusion coefficient and one-dimensional fractional Brownian noise:
\begin{equation*}
\forall t\geq 0,\quad Y_t = y_0 + \int_0^t b(Y_u) \dd u + K_t + B^H_t,
\end{equation*}
where $B^H$ is a fractional Brownian motion with Hurst parameter $H\in [\tfrac{1}{2},1)$, $y_0\geq 0$, $b\in \mathcal{C}^1_b(\R)$ and $(Y,K)$ is the solution of the Skorokhod problem above the constant boundary $0$. Note that we added a drift $b$ compared to previous equations, which in the previous section could have been part of the vector field $\sigma$. However it is not necessary here to assume as much as a fourth order bounded derivative in this case, and one can check that $b\in \mathcal{C}^1_b$ is enough to get existence with the method of Section~\ref{sec:penalisation}.\\
As in the previous sections, $Y$ is approximated by the non-decreasing sequence of processes $(Y^n)_{n\in\N}$:
\begin{equation}\label{eq:YnSig1}
\forall t\geq 0,\quad Y^n_t = y_0 + \int_0^t b(Y^n_u) \dd u + \int_0^t \psi_n(Y^n_u) \dd u + B^H_t.
\end{equation}

\subsection{Malliavin calculus and fractional Brownian motion}

Let us briefly review some fundamental tools and results of Malliavin calculus that permit to prove that some random variables are absolutely continuous with respect to the Lebesgue measure. In a second paragraph, we shall give a brief account of Malliavin calculus for the fractional Brownian motion, in a manner that emphasises the applicability of the general results to the fBm framework.

~

Let $D$ denote the usual Malliavin derivative on the Cameron-Martin space $\HH=L^2([0,T])$. For any $p\geq 1$, let $\mathbb{D}^{1,p}$ be the Malliavin-Sobolev space associated to the derivative operator $D$. 
We aim at applying the following result of Bouleau and Hirsch \cite{BouleauHirsch} to $Y_t$.
\begin{theorem}[Th. 2.1.3 of \cite{Nualart}]\label{th:existDensityBH}
	Let $X\in \mathbb{D}^{1,2}$ be a real-valued random variable. If $A\in \mathcal{F}$ 
	and $\|DX\|_{\mathcal{H}}>0~a.s.$ on $A$, then the restriction of the law of $X$ to $A$, i.e. the measure $[\mathbf{1}_A \PP]\circ X^{-1}$, admits a density with respect to the Lebesgue measure.
\end{theorem}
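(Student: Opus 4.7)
The plan is to prove the Bouleau--Hirsch criterion via the equivalent formulation: it suffices to show $\PP(A\cap\{X\in B\})=0$ for every Borel set $B\subset\R$ with Lebesgue measure $\lambda(B)=0$. A standard localisation reduces matters to the case where $\|DX\|_{\mathcal H}\ge c$ on $A$ for some constant $c>0$: indeed $A=\bigcup_k (A\cap\{\|DX\|_{\mathcal H}\ge 1/k\})$ up to a $\PP$-null set by hypothesis, and it is enough to establish the conclusion on each layer separately.

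The heart of the argument combines a smooth majorant of $\mathbf 1_B$ with the closability of the Malliavin derivative. Given $B$ with $\lambda(B)=0$, outer regularity yields for every $\epsilon>0$ an open $U_\epsilon\supset B$ with $\lambda(U_\epsilon)<\epsilon$; mollification plus a cutoff produces $\psi_\epsilon\in C_c^\infty(\R)$ with $\mathbf 1_B\le\psi_\epsilon\le\mathbf 1_{U_\epsilon}$. Its antiderivative $\Psi_\epsilon(x):=\int_0^x\psi_\epsilon(y)\,dy$ is $C^1_b$ with $\|\Psi_\epsilon\|_\infty\le\lambda(U_\epsilon)<\epsilon$. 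The chain rule, valid since $\Psi_\epsilon$ is Lipschitz with bounded derivative, places $\Psi_\epsilon(X)$ in $\mathbb D^{1,2}$ with $D\Psi_\epsilon(X)=\psi_\epsilon(X)\,DX$. Because $\Psi_\epsilon(X)\to 0$ uniformly and the family $\{D\Psi_\epsilon(X)\}_\epsilon$ is bounded in $L^2(\Omega;\mathcal H)$ (using $\psi_\epsilon\le 1$), reflexivity furnishes a subsequence $\epsilon_n\downarrow 0$ along which $\psi_{\epsilon_n}(X)\,DX\rightharpoonup Z$ weakly in $L^2(\Omega;\mathcal H)$. Closability of $D$ (its graph is norm-closed in a Hilbert space, hence weakly closed by convexity) identifies $Z=D(0)=0$.

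To turn this weak statement into an estimate on the law of $X$, I test against the bounded localiser $V_N:=\mathbf 1_A\mathbf 1_{\{|X|\le N\}}\,DX\in L^2(\Omega;\mathcal H)$. Weak convergence yields
\[
\EE\!\left[\mathbf 1_A\mathbf 1_{\{|X|\le N\}}\,\psi_{\epsilon_n}(X)\,\|DX\|_{\mathcal H}^2\right]=\langle\psi_{\epsilon_n}(X)\,DX,\,V_N\rangle_{L^2(\Omega;\mathcal H)}\xrightarrow[n\to\infty]{}0.
\]
Since $\|DX\|_{\mathcal H}^2\ge c^2$ on $A$ and $\psi_{\epsilon_n}\ge\mathbf 1_B$ pointwise, the left-hand side is bounded below by $c^2\,\PP(A\cap\{X\in B\}\cap\{|X|\le N\})$; this probability therefore vanishes for every $N$, and monotone convergence in $N$ yields $\PP(A\cap\{X\in B\})=0$, as desired.

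The main obstacle is the passage from the $L^2$-smallness of $\Psi_\epsilon(X)$ to useful information about $D\Psi_\epsilon(X)$, as one has no reason to expect strong $\mathcal H$-convergence of the derivatives. The crucial quantitative choice is the bounded-cap construction $\psi_\epsilon\le 1$, which gives the uniform $L^2(\Omega;\mathcal H)$-bound needed for weak compactness; closability of $D$ then replaces the missing strong convergence, while the test against $V_N$ together with the quantitative lower bound $c^2$ from the localisation step is what converts the abstract weak limit into the desired probability estimate.
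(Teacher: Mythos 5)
The paper does not prove this statement---it is quoted verbatim from Nualart's book (the Bouleau--Hirsch criterion)---so your attempt can only be measured against the standard proof. Your overall architecture (antiderivative of an approximate indicator, $\mathcal{C}^1_b$ chain rule, uniform $L^2(\Omega;\mathcal H)$ bound on the derivatives, closedness of the graph of $D$ to identify the limit as $0$, then testing against $\mathbf{1}_A DX$) is the right one, but there is a genuine gap at the very first construction: a continuous function $\psi_\epsilon$ with $\mathbf{1}_B\le\psi_\epsilon\le\mathbf{1}_{U_\epsilon}$ does not exist for a general Borel null set $B$. Take $B=\Q\cap(0,1)$: continuity and $\psi_\epsilon\ge\mathbf{1}_B$ force $\psi_\epsilon\ge 1$ on $\overline{B}=[0,1]$, while $\psi_\epsilon\le\mathbf{1}_{U_\epsilon}$ forces $\psi_\epsilon=0$ on $[0,1]\setminus U_\epsilon$, which has measure at least $1-\epsilon>0$. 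Both inequalities are load-bearing in your argument (the upper one gives $\|\Psi_\epsilon\|_\infty<\epsilon$, the lower one gives the final bound from below by $c^2\,\PP(A\cap\{X\in B\})$), so the proof as written breaks down exactly for the dense null sets one cannot ignore.

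Two standard repairs exist. (i) Reduce first to \emph{compact} null sets: the finite Borel measure $[\mathbf{1}_A\PP]\circ X^{-1}$ is inner regular, so it suffices to show it vanishes on every compact $K\subset B$, and for compact $K$ inside an open $U$ with $\lambda(U)<\epsilon$ a smooth Urysohn function with $\mathbf{1}_K\le\psi\le\mathbf{1}_U$ does exist; your argument then runs verbatim. (ii) Follow Nualart more closely and take $\varphi_\epsilon(x)=\int_0^x\mathbf{1}_{U_\epsilon}(y)\,dy$ directly: since $U_\epsilon$ is open, $\mathbf{1}_{U_\epsilon}$ is the increasing pointwise limit of continuous $g_k$ with $0\le g_k\le 1$, so $D[\!\int_0^X g_k]=g_k(X)DX\to\mathbf{1}_{U_\epsilon}(X)DX$ strongly in $L^2(\Omega;\mathcal H)$ and closedness of $D$ yields $D\varphi_\epsilon(X)=\mathbf{1}_{U_\epsilon}(X)DX$ without any appeal to absolute continuity of the law of $X$; taking a decreasing sequence $U_{\epsilon_n}$ one even gets the \emph{strong} limit $\mathbf{1}_{\bigcap_n U_{\epsilon_n}}(X)DX=0$ by dominated convergence, which makes your weak-compactness and test-function step unnecessary. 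The remaining ingredients of your proof (the localisation in $k$, the identification of the weak limit via weak closedness of the graph, the pairing with $\mathbf{1}_A DX$) are correct; note only that the truncation $\mathbf{1}_{\{|X|\le N\}}$ is superfluous, since $\mathbf{1}_A DX$ already lies in $L^2(\Omega;\mathcal H)$.
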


~

Concerning the Malliavin calculus for fBm, we recall some definition and properties borrowed from \cite[Chapter 5]{Nualart} (see also \cite[Section 2]{RichardTalay} for a more detailed introduction than what we present here). One possible approach to this calculus is to consider the kernel $\Gamma$, defined for any $H\in(\tfrac{1}{2},1)$ by:
\begin{equation}
\Gamma(t,s) = 
\begin{cases}
c_H s^{\frac{1}{2}-H}\int_s^t u^{H-\frac{1}{2}} (u-s)^{H-\frac{3}{2}} \dd u  & \text{ if }t>s>0 \\
0 & \text{ if } t\leq s,
\end{cases}
\end{equation}
where $c_H$ is a positive constant (see \cite[Eq. (5.8)]{Nualart}). Note that for the standard Brownian motion ($H=\tfrac{1}{2}$), $\Gamma(t,s) = \mathbf{1}_{[0,t]}(s)$. Then if $B^H$ is a fractional Brownian motion,
\begin{align*}
D_s B^H_t = \Gamma(t,s) .
\end{align*}
Furthermore, one can define an $L^2([0,T])$-valued linear isometry $\Gamma^*$ as follows: for any step function $\varphi$,
\begin{align*}
\Gamma^*\varphi(s) = \int_s^T \varphi(u) \frac{\partial \Gamma}{\partial u}(u,s) \dd u = c_H \int_s^T \varphi(u) \left(\frac{u}{s}\right)^{H-\frac{1}{2}} (u-s)^{H-\frac{3}{2}} \dd u .
\end{align*}
The domain of $\Gamma^*$ is thus a Hilbert space, which we do not need to characterise here, but only recall that for any $H\in[\tfrac{1}{2},1)$, it contains $\mathcal{C}_b([0,T])$. Besides, if the support of $\varphi$ is contained in $[0,t]$ for some $t>0$, one can verify that
\begin{align}\label{eq:Kstar}
\Gamma^*\varphi(s) = \int_s^t \frac{\partial \Gamma}{\partial u}(u,s) \varphi(u)  \dd u .
\end{align}
Applying the Malliavin derivative on both sides of \eqref{eq:YnSig1}, we obtain
\begin{equation*}
\forall s,t\geq 0,\quad D_s Y^n_t = \int_0^t D_s Y^n_u ~b'(Y^n_u) \dd u + \int_0^t D_s Y^n_u ~\psi_n'(Y^n_u) \dd u + \Gamma(t,s) .
\end{equation*}
Since for any fixed $s\geq 0$, the previous equality is a linear ODE in $t$, solving it yields
\begin{equation}\label{eq:derivYn}
\forall t\geq 0,\quad D_s Y^n_t = \Gamma^*\left[\mathbf{1}_{[0,t]}(\cdot) \exp\left\{\int_{\cdot}^t (b'(Y^n_v) + \psi_n'(Y^n_v)) \dd v \right\} \right](s)  .
\end{equation}
For the ease of notations, let us define, for $\varphi= b'+\psi_n'$ or $\varphi=b'$, the process
\begin{align*}
\mathcal{E}_{s,t}[\varphi] = \mathbf{1}_{[0,t]}(s) \exp\left\{\int_{s}^t \varphi(Y^n_v) \dd v \right\} .
\end{align*}

\subsection{Proof of existence of a density (Theorem \ref{th:density})}

Let $t>0$. The scheme of proof is very similar to the one used by \citet{Tindel} for elliptic PDEs perturbed by an additive white noise. First, notice that  the sequence $(DY^n_t)_{n\in\N}$ is bounded in $L^2(\Omega;\HH)$. Indeed, the mapping $\mathcal{E}_{\cdot,t}[b'+\psi_n']$ is bounded uniformly in $n$ and $s\in[0,T]$, since $\psi_n'\leq 0$. Hence it is clear that the same is true of $\Gamma^*Y^n_t$ (note that this is where things become difficult if one wants to consider the case $H<\tfrac{1}{2}$). 
Since $Y^n$ converges to $Y$ in $L^2$ (cf Theorem~\ref{th:RfSDE_Gaussian}) and $\sup_{n\in\N}\EE\left(\|DY^n_t\|_\HH\right)<\infty$ in view of the preceding discussion, we deduce that $Y_t\in \mathbb{D}^{1,2}$ and that $(DY^n_t)_{n\in\N}$ converges weakly to $DY_t$ in $L^2(\Omega;\HH)$ (see for instance Lemma 1.2.3 in \cite{Nualart}), i.e.  that for any $\chi \in L^2(\Omega)$ and any $f \in \HH$,
\begin{align}\label{eq:limitDYn}
\lim_{n\rightarrow \infty} \EE\left[ \chi \langle DY^n_t, f\rangle \right] = \EE\left[ \chi \langle DY_t, f\rangle \right] .
\end{align}
In the sequel, we shall apply this convergence to any non-negative $\chi\in L^2(\Omega)$, and to any $f\in \HH$ with a sufficiently small support.

Let $\Omega_0$ be a measurable set of measure $1$ on which $Y^n$ converges (uniformly) (cf Theorem~\ref{th:RfSDE_Gaussian}). In view of Theorem~\ref{th:existDensityBH}, it suffices to prove that for any $a>0$, $\|DY_t\|_\HH>0~ a.s.$ on the event 
\begin{align*}
\Omega_a = \{\omega\in \Omega_0:~ Y_t\geq 3a\} .
\end{align*}
As in \cite{Tindel}, we notice that it suffices to prove that for any $k,j\in\N^*$, $\|DY_t\|_\HH>0~ a.s.$ on the following event
\begin{align*}
\Omega_{a,k,j} := \left\{\omega\in \Omega_a:~ Y^k_t\geq 2a \text{ and } Y^k_s\geq a,~\text{for any $s$ such that } |t-s|\leq j^{-1} \right\} .
\end{align*}
Indeed  $\bigcup_{k,j\in \N^*} \Omega_{a,k,j} = \Omega_a$ (since by definition $\Omega_{a,k,j} \subseteq \Omega_{a}$).

Hence let now $a,k,j$ be fixed. Since the sequence $Y^n$ is non-decreasing, $Y^n_s \geq a~a.s.$ on $\Omega_{a,k,j}$ for all $n\geq k$ and all $s\in [t-j^{-1},t+j^{-1}]$, and thus $\psi_n'(Y^n_s)=0$. Hence on $\Omega_{a,k,j}$ and for all $n\geq k$, \eqref{eq:derivYn} now reads
\begin{align}\label{eq:DYn=KE}
\forall s\in [t-j^{-1},t],\quad  D_s Y^n_t = \Gamma^*\mathcal{E}_{\cdot,t}[b'](s) .
\end{align}
Based on the definitions of the previous section (in particular \eqref{eq:Kstar}), and for any non-negative $f\in L^2$ with support in $[t-j^{-1},t]$,
\begin{align}\label{eq:scalprod}
\langle DY^n_t, f \rangle &= c_H \int_{t-j^{-1}}^t f(s) \int_s^t \left(\frac{u}{s} \right)^{H-\frac{1}{2}} (u-s)^{H-\frac{3}{2}} \mathcal{E}_{u,t}[b'+\psi_n'] \dd u  \dd s.
\end{align}
Thus using \eqref{eq:DYn=KE} and the boundedness of $b'$, it follows that $\mathbf{1}_{\Omega_{a,k,j}} \mathcal{E}_{u,t}[b'+\psi_n'] \geq \mathbf{1}_{\Omega_{a,k,j}} e^{-(t-u)\|b'\|_\infty}$, hence one gets
\begin{align*}
\mathbf{1}_{\Omega_{a,k,j}}\langle DY^n_t, f \rangle &\geq \mathbf{1}_{\Omega_{a,k,j}} e^{-j^{-1}\|b'\|_\infty}  \int_{t-j^{-1}}^t  f(s) c_H \int_s^t \left(\frac{u}{s} \right)^{H-\frac{1}{2}} (u-s)^{H-\frac{3}{2}}  \dd u \dd s \\
&= \mathbf{1}_{\Omega_{a,k,j}} e^{-j^{-1}\|b'\|_\infty}  \int_{t-j^{-1}}^t  f(s) \Gamma(t,s) \dd s .
\end{align*}
Hence the previous inequality and \eqref{eq:limitDYn} yield that for any non-negative $\chi\in L^2(\Omega)$ and any non-negative $f\in L^2$ with support in $[t-j^{-1},t]$,
\begin{align*}
\EE\left( \mathbf{1}_{\Omega_{a,k,j}} \chi \langle DY_t, f \rangle \right) \geq \EE\left(\mathbf{1}_{\Omega_{a,k,j}} \chi~e^{-j^{-1}\|b'\|_\infty} \langle \Gamma(t,\cdot), f\rangle \right). 
\end{align*}
It follows that almost surely, $\mathbf{1}_{\Omega_{a,k,j}} D_sY_t \geq \mathbf{1}_{\Omega_{a,k,j}} e^{-j^{-1}\|b'\|_\infty} \Gamma(t,s)$ for almost all $s\in [t-j^{-1},t]$. Hence the following inequality holds almost surely on $\Omega_{a,k,j}$:
\begin{align*}
\|DY_t\|_\HH \geq e^{-j^{-1}\|b'\|_\infty} \|\Gamma(t,\cdot)\mathbf{1}_{[t-j^{-1},t]}\|_\HH  >0 .
\end{align*}
Applying Theorem~\ref{th:existDensityBH} to $DY_t$ concludes the proof of Theorem~\ref{th:density}.

\appendix

\section{Proofs and \emph{a priori} estimates}\label{app}

\subsection{Proof of technical results}\label{subsec:app1}

\begin{proof}[Proof of Proposition~\ref{prop:ExistenceUnbdd}]
	 In a first step, we adopt the definition of solution given in \cite[Definition 12.1]{FrizVictoir} (see also \cite[Definition 3]{FrizOberhauser}),  which gives $Y$ as the uniform limit of some paths $Y^k$ which solve \eqref{eq:RDEdrift} with $\RP$ replaced by an approximating sequence of Lipschitz paths $(X^k, \mathbb{X}^k)$.
 In the last paragraph of this proof, we explain why this solution is also a controlled solution.

	The local existence of a solution of \eqref{eq:penalized_smoothed_fSDE} comes from \cite[Theorem 3]{LejayRDE} (see also \cite[Theorem 10.21]{FrizVictoir}). Uniqueness is also granted given the regularity of $\sigma$ and $b$. In view of \cite[Lemma 1]{LejayRDE} (see also \cite[Theorem 10.21]{FrizVictoir}), we know that either $Y$ is a global solution on $[0,T]$, or that there is a time $\tau'\leq T$ such that for any $\tau\in[0,\tau')$, $(Y_s)_{s\in[0,\tau]}$ is a solution to \eqref{eq:penalized_smoothed_fSDE} and that $\lim_{t\rightarrow \tau'} |Y_t| = \infty$. Thus we shall prove that $Y_t$ coincides on $[0,\tau')$ with the solution to \eqref{eq:def_XY}. Since the latter does not explode, it will follow that $Y$ is a global solution.

	Let us turn to the Doss-Sussmann representation. Recall from Subsection~\ref{subsec:flow} that if $\sigma\in \mathcal{C}_b^4$, $\Jac[\RP][\cdot][0][t]$ is Lipschitz continuous, uniformly in $t$, 
	but that due to the unboundedness of $b$, $W(t,\cdot)$ is only locally Lipschitz (uniformly in $t$). This suffices to prove existence and uniqueness of a solution to $\dot{z}_t = W(t,z_t)$ on a small enough time interval. In fact, $\Jac[\RP][\cdot][0][t]$ is bounded (see \eqref{eq:boundUJJinv}) and $C^\RP_U:= \sup_{t\in[0,T]} |\flow[\RP][0][t][0]|<\infty$. Denote by $B(C^\RP_U)$ the ball of $\R^e$ centred in $0$ and with radius $C^\RP_U$. Thus 
	\begin{align*}
	|W(t,z)| &\leq |\Jac[\RP][z][0][t] b(\flow[\RP][0][t][0])| + |W(t,z)-\Jac[\RP][z][0][t] b(\flow[\RP][0][t][0])| \\
	&\leq C^{\RP}_J \sup_{x\in B(C^\RP_U)}|b(x)| + C^\RP_J |b(\flow[\RP][z][t][0])-b(\flow[\RP][0][t][0])|\\
	&\leq C^{\RP}_J \left( \|b\|_{\infty, B(C^\RP_U)} + \|\nabla b\|_\infty C^{\RP}_J |z|\right),
	\end{align*}
	i.e. $W$ has linear growth. This ensures the stability of the solution $Z_t$ to the ODE $\dot{Z}_t = W(t,Z_t)$, and its global existence on any time interval (see e.g. \cite[Theorem  3.7]{FrizVictoir}). Thus the process $(\flow[\RP][Z_t])_{t\in[0,T]}$ is well-defined.
	
	Now to prove that $(Y_t)_{t\in[0,\tau]}$ and $(\flow[\RP][Z_t])_{t\in[0,\tau]}$ coincide, one can follow the scheme of proof of \cite[Proposition 3]{FrizOberhauser}: let $(\RP^k)_{k\in\N}$ a sequence of geometric rough paths such that $(X^k)_{k\in\N}$ is a sequence of Lipschitz paths with uniform $\beta$-Hölder bound, which converges pointwise to $\RP$. Denote by $Y^{k}$ the solution to \eqref{eq:penalized_smoothed_fSDE} where $\RP^k$ replaces $\RP$. Then $(Y^{k},Z^{k})$ is easily seen to solve \eqref{eq:def_XY} with $\RP$ replaced by $\RP^k$. As stated in \cite{FrizOberhauser}, it suffices to prove the uniform convergence of $Z^{k}$ to $Z$ to get the result. Define $$M^k = \displaystyle\sup_{s\in[0,\tau], z\in\R^e} |\Jac[\RP][z][0][s] - \Jac[\RP^k][z][0][s]|\vee |\flow[\RP][z][s] - \flow[\RP^k][z][s]|$$ and denote $J_{\text{Lip}} = \sup_{s\in[0,T]} \|\Jac[\RP][\cdot][s]\|_{\text{Lip}}$ which is finite (see the discussion of Section~\ref{subsec:flow}). Now the main difference with \cite{FrizOberhauser} lies again in the unboundedness of $b$: denote by \(\overline{Z} = \sup_{t\in[0,T]} |Z_t| <\infty\) and  $\overline{C}^\RP_U = \sup_{t\in[0,T], z\in B(0,\overline{Z})} |\flow[\RP][z][t]|<\infty$. 
	Then for $t\leq \tau$, 
	\begin{align*}
	|Z^{k}_t - Z_t| &\leq \int_0^t \bigg\{ |\Jac[\RP][Z_s][0][s]-\Jac[\RP][Z^{k}_s][0][s]|~|b(\flow[\RP][Z_s][s])| +|\Jac[\RP][Z^{k}_s][0][s]-\Jac[\RP^k][Z^{k}_s][0][s]|~|b(\flow[\RP^k][Z^{k}_s][s])|  \\
	&\hspace{1cm}  + |\Jac[\RP][Z^{k}_s][0][s]| | b(\flow[\RP^k][Z^{k}_s][s][0]) - b(\flow[\RP][Z_s][s][0])|\bigg\} \dd s\\
	&\leq \int_0^t\bigg\{ J_{\text{Lip}} |Z^{k}_s - Z_s|~\|b\|_{\infty, B(C^\RP_U)}  + M^k ~ \left(\|b\|_{\infty,B(\overline{C}^\RP_U)} + \|\nabla b\|_\infty |\flow[\RP^k][Z^{k}_s][s]- \flow[\RP][Z_s][s]|  \right) \\
	&\hspace{1cm}  +C^\RP_J \|\nabla b\|_{\infty} |\flow[\RP^k][Z^{k}_s][s]- \flow[\RP][Z_s][s]| \bigg\} \dd s \\
	&\leq \int_0^t\bigg\{ J_{\text{Lip}} |Z^{k}_s - Z_s|~\overline{b}  + M^k ~ \overline{b} + \|\nabla b\|_{\infty} \left(C^\RP_J+M^k\right)\left(M^k + C^\RP_J |Z^k_s-Z_s|\right)\bigg\} \dd s,
	\end{align*}
	where $\overline{b}:= \max\left(\|b\|_{\infty, B(C^\RP_U)}, \|b\|_{\infty, B(\overline{C}^\RP_U)}\right)$. Then, denoting 
	$C^{Z,1} := \overline{b} + \|\nabla b\|_{\infty} C^\RP_J$ and $C^{Z,2} := \overline{b}J_{\text{Lip}} + \|\nabla b\|_\infty (C^\RP_J)^2$, one gets
	\begin{align}\label{eq:boundZk-Z}
	\sup_{s\in[0,t]} |Z^{k}_s - Z_s| &\leq t C^{Z,1} M^k+ t \|\nabla b\|_\infty (M^k)^2 + C^{Z,2}(1+M^k) \int_0^t \sup_{u\in[0,s]} |Z^{k}_u - Z_u| \dd s \nonumber\\
	&\leq  \left(C^{Z,1} + \|\nabla b\|_\infty M^k \right) T M^k~\exp\left(C^{Z,2}(1+M^k) T \right),
	\end{align}
	by applying Gr\"onwall's lemma in the last inequality. By the continuity of the mapping $(y,\RP) \mapsto \flow[\RP][y][\cdot]$ (see \cite[Theorem 8.5]{FrizHairer}), there is $M^k \rightarrow 0$ as $k\rightarrow \infty$. Hence the inequality \eqref{eq:boundZk-Z} implies that $Z^{k}$ converges uniformly to $Z$. Since $Y^{k}$ has the representation \eqref{eq:def_XY}, then so has $Y$.
	
	Hence $(Y_t)_{t\in[0,\tau]}$ and $(\flow[\RP][Z_t])_{t\in[0,\tau]}$ do coincide and since the latter does not explode in finite time, this implies that there cannot exist $\tau'>\tau$ such that $\lim_{t\rightarrow \tau'} |Y_t| = \infty$. Thus $Y$ is defined on $[0,T]$.

	Finally, denoting \(M = \sup_{t\in[0,T]}|Y_t|\) and up to changing the drift $b$ into a bounded smooth function \(b^{(M)}\) equal to \(b\) on the interval \((-2M,2M)\), we get that $Y$ solves an RDE with bounded smooth coefficients in the sense of controlled rough paths (see \cite[Theorem 8.4]{FrizHairer}).
\end{proof}

\begin{proof}[Proof of Lemma~\ref{lem:deterministic}]
	\begin{enumerate}[label = (\roman*)]
		\item Denoting $k^n_t:=\Psi\int_0^t \psi_n(f^n_u-\ell_u) \dd u$, let $\overline{f}^n$ and $\overline{g}^n$ be defined as follows:
		\begin{align*}
		\overline{f}^n_t := \delta f^n_{0,t} -\delta\ell_{0,t} &= -\delta\ell_{0,t} + g^n_t + \Psi\int_0^t \psi_n(f^n_u-\ell_u) \dd u\\
		&=: \overline{g}^n_t + k^n_t .
		\end{align*} 
		Observe that
		\begin{align*}
		(\overline{f}^n_t)^2 = (\overline{g}^n_t)^2 + (k^n_t)^2 + 2\int_0^t \overline{g}^n_t \dd k^n_u &= (\overline{g}^n_t)^2 + 2\int_0^t\left( k^n_u +\overline{g}^n_t\right) \dd k^n_u\\
		&\leq (\overline{g}^n_t)^2 + 2\int_0^t\left(\overline{g}^n_t - \overline{g}^n_u\right) \dd k^n_u
		\end{align*}
		where we used the inequality $\overline{f}^n_u \psi_n(f^n_u-\ell_u) \leq (f^n_u-\ell_u) \psi_n(f^n_u-\ell_u) \leq 0$. It follows that
		\begin{align*}
		(\overline{f}^n_t)^2\leq (\overline{g}^n_t)^2 + 2k^n_t \|\overline{g}^n_t - \overline{g}^n_\cdot\|_{\infty,[0,t]}&\leq (\overline{g}^n_t)^2 + 2(|\overline{f}^n_t| + |\overline{g}^n_t|) \|\overline{g}^n_t - \overline{g}^n_\cdot\|_{\infty,[0,t]}\\
		&\leq 5\|\overline{g}^n_\cdot\|_{\infty,[0,t]}^2 + 4|\overline{f}^n_t|\|\overline{g}^n_\cdot\|_{\infty,[0,t]}\\
		&\leq 5\|\overline{g}^n_\cdot\|_{\infty,[0,t]}^2 + \tfrac{1}{2}\left(|\overline{f}^n_t|^2 + 16\|\overline{g}^n_\cdot\|_{\infty,[0,t]}^2\right),
		\end{align*}
		which implies the result.

		\item 
		The inequality $\psi_n(x) \geq -\tfrac{1}{2} - nx$ yields $f^n_t -\ell_t\geq f^n_0 - \ell_0 +\overline{g}^n_t-\tfrac{1}{2}\Psi t - n\Psi\int_0^t (f^n_u-\ell_u) \dd u$. Denote $\widetilde{g}^n_t:= \overline{g}^n_t-\tfrac{1}{2}\Psi t$ and $\widetilde{f}^n$ the solution to
		\begin{align}\label{eq:tildef}
		\widetilde{f}^n_t -\ell_t&= f^n_0 - \ell_0 +\widetilde{g}^n_t - n\Psi\int_0^t (\widetilde{f}^n_u-\ell_u) \dd u.
		\end{align}
		The comparison principle of ODEs implies that $\forall t\in[0,T]$, $f^n_t-\ell_t\geq \widetilde{f}^n_t-\ell_t$. Solving \eqref{eq:tildef} yields
		\begin{align}\label{eq:lowBoundfn-l}
		f^n_t-\ell_t&\geq (f^n_0 - \ell_0) e^{-n\Psi t} -  \int_0^t e^{-n\Psi(t-u)}\dd \widetilde{g}^n_u \nonumber\\
		&\geq (f^n_0 - \ell_0) e^{-n\Psi t} +\widetilde{g}^n_t e^{-n\Psi t}+ n\Psi \int_0^t e^{-n\Psi(t-u)} (\widetilde{g}^n_u-\widetilde{g}^n_t)\dd u~,~~t\in[0,T].
		\end{align} 
		Since $\psi_n(x) \leq n x_-$, we now obtain from \eqref{eq:lowBoundfn-l} that
		\begin{align}\label{eq:boundPsin}
		\psi_n(f^n_t-\ell_t) &\leq n\left(\widetilde{g}^n_t e^{-n\Psi t} + n\Psi \int_0^t e^{-n\Psi(t-u)} (\widetilde{g}^n_u-\widetilde{g}^n_t)\dd u\right)_- \nonumber\\
		&\leq n \|\widetilde{g}^n\|_{\beta} t^\beta e^{-n\Psi t}  + \|\widetilde{g}^n\|_\beta n^2 \Psi \int_0^t e^{-n\Psi(t-u)} (t-u)^\beta \dd u~,~~t\in[0,T].
		\end{align}
		It is clear that $n \|\widetilde{g}^n\|_{\beta} t^\beta e^{-n\Psi t} \leq \|\widetilde{g}^n\|_{\beta} \Psi^{-\beta} n^{1-\beta}$. Thus one focuses now on the second term: an integration by parts and the change of variables $v=n\Psi u$ yield
		\begin{align*}
		n^2 \Psi \int_0^t e^{-n\Psi(t-u)} (t-u)^\beta \dd u &= -n t^\beta e^{-n\Psi t} + \beta n \int_0^t e^{-n\Psi u} u^{\beta-1}\dd u\\
		& =  -n t^\beta e^{-n\Psi t} + \beta n^{1-\beta} \Psi^{1-\beta} \int_0^{n\Psi t} v^{\beta-1} e^{-v}\dd v \\
		&\leq C n^{1-\beta} \Psi^{1-\beta}.
		\end{align*}
		Plugging the last inequality in \eqref{eq:boundPsin} gives the desired result.
	\end{enumerate}
\end{proof}

%%%%%%%%%%%%%%%%%%%%%%%%%%%%%%%%%%%%%%%%%%%%%%

\subsection{A priori estimates for controlled RDEs with finite variation drift}\label{subsec:app2}

In the whole subsection, we consider two non-decreasing continuous paths $K^1$ and $K^2$, and for some fixed 
$\sigma\in \mathcal{C}^3_b(\R, (\R^d)')$, we may assume that there exist solutions $Y^1$ and $Y^2$ on $[0,T]$ to the following equations
\begin{equation*}
\begin{cases}
dY^i_t = dK^i_t + \sigma(Y^i_t) d\RP_t \\
Y^i_0 = y^i \in \R
\end{cases}
,~ t\in[0,T],~i=1,2 ,
\end{equation*}
in the sense of controlled rough paths. Let us set for any $(s,t)\in \mathcal{S}_{[0,T]}$,
\begin{equation}\label{eq:defRRs}
\begin{split}
&R^{Y^i}_{s,t} := \delta Y^i_{s,t} -\sigma(Y^i_s) \delta X_{s,t} , \\
&R^{\sigma(Y^i)}_{s,t} := \delta\sigma(Y^i)_{s,t} -\sigma'(Y^i_s) \sigma(Y^i_s) \delta X_{s,t} .
\end{split}
\end{equation}
Recall that the operator $\Delta$ was introduced in Subsection~\ref{subsec:uniqueness}, and similarly to \eqref{eq:defKappaXwn}, consider $\kappab_{\RP,K^i}(s,t) := \vertiii{\RP}_{p,[s,t]}^p +  (\delta K^i_{s,t})^{p}$.

\begin{lemma}\label{lem:RDeltaSig}
	Let $K^1$ and $K^2$ be non-decreasing continuous paths, let $\sigma\in \mathcal{C}^3_b(\R, (\R^d)')$, let $\RP \in \Cbeta$ with 
	$\beta\in(\tfrac{1}{3},\tfrac{1}{2})$, and assume that $Y^1$ and $Y^2$ are controlled solutions as above. Then there exists $C>0$ 
	depending only on $p(=\beta^{-1})$ and the uniform norm of $\sigma$ and its derivatives, and there exists 
	$\delta_\RP \geq C^{-1} \vertiii{\RP}_\beta^{-1/\beta} \wedge T$ such that: 
\begin{enumerate}[label = (\roman*)]
\item\label{item:i} For any $(s,t)\in \mathcal{S}_{[0,T]}$,
\begin{align*}
	\|Y^i\|_{p,[s,t]}^p \leq C ~\phi_p\left(\kappab_{\RP,K^i}(s,t) \right) ,~i=1,2.
\end{align*}

\item\label{item:i'} For any $(s,t)\in \mathcal{S}_{[0,T]}$ such that $|t-s|\leq \delta_\RP$,
\begin{equation}\label{eq:boundRRspv}
	\max\left(\|\Ri\|_{\ppv,[s,t]}^{\frac{p}{2}}, \|\Rsi\|_{\ppv,[s,t]}^{\frac{p}{2}} \right) \leq C~ \kappab_{\RP,K^i}(s,t), ~i=1,2.
\end{equation}

\item\label{item:ii} For any $(s,t)\in \mathcal{S}_{[0,T]}$ such that $|t-s|\leq \delta_\RP$,
	\begin{align*}
	\|R^{\Delta\sigma(Y)}\|_{\ppv,[s,t]} \leq C \left\{w_K(s,t) + (w_K(s,t)+\|\Delta Y\|_{\infty,[s,t]})
	\phi_p\left(\widetilde{\kappab}_{\RP,K}(s,t)^{\frac{1}{p}} \right) \right\} .
	\end{align*}
\end{enumerate}
\end{lemma}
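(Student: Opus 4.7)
The proof adapts the standard bootstrap for a priori $p$-variation estimates of controlled RDEs, enriched to track the finite-variation drifts $K^i$. The threshold $\delta_\RP$ is chosen so that on any sub-interval of length at most $\delta_\RP$ the $\beta$-Hölder norm satisfies $\vertiii{\RP}_{\beta,[s,t]} \leq \vertiii{\RP}_\beta (t-s)^\beta \leq $ a small universal constant, which is exactly what the bound $\delta_\RP \geq C^{-1} \vertiii{\RP}_\beta^{-1/\beta} \wedge T$ provides.

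For \ref{item:i'} and \ref{item:i}, I would start from $\delta Y^i_{s,t} = \delta K^i_{s,t} + \int_s^t \sigma(Y^i_u) \dd \RP_u$ and apply Theorem~\ref{th:RI} to write
\[R^{Y^i}_{s,t} = \delta K^i_{s,t} + \sigma'(Y^i_s)\sigma(Y^i_s)\mathbb{X}_{s,t} + A^i_{s,t},\]
with $|A^i_{s,t}| \leq C_p(\|X\|_{\pv,[s,t]} \|R^{\sigma(Y^i)}\|_{\ppv,[s,t]} + \|\mathbb{X}\|_{\ppv,[s,t]} \|\sigma'(Y^i)\sigma(Y^i)\|_{\pv,[s,t]})$. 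A second-order Taylor expansion of $\sigma$ gives $R^{\sigma(Y^i)}_{s,t} = \sigma'(Y^i_s) R^{Y^i}_{s,t} + \rho_i(s,t)(\delta Y^i_{s,t})^2$ with $|\rho_i| \leq \tfrac{1}{2}\|\sigma''\|_\infty$. Raising to the $(p/2)$-th power and summing along a partition, exploiting super-additivity of $\|X\|_{\pv}^p$, $\|\mathbb{X}\|_{\ppv}^{p/2}$ and of $\delta K^i$ (since $K^i$ is non-decreasing and $p/2 \geq 1$, so $\sum (\delta K^i_{t_k,t_{k+1}})^{p/2} \leq (\delta K^i_{s,t})^{p/2}$), yields a coupled linear inequality in $\|R^{Y^i}\|_{\ppv,[s,t]}^{p/2}$ and $\|R^{\sigma(Y^i)}\|_{\ppv,[s,t]}^{p/2}$, with inhomogeneous part of order $\kappab_{\RP,K^i}(s,t)$ and coupling coefficient involving a positive power of $\vertiii{\RP}_{\beta,[s,t]}$. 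For $|t-s|\leq \delta_\RP$, this coefficient is $\leq \tfrac12$ and can be absorbed, giving \ref{item:i'}. The bound \ref{item:i} on small intervals follows from $\|Y^i\|_{\pv,[s,t]} \leq \|\sigma\|_\infty \|X\|_{\pv,[s,t]} + \|R^{Y^i}\|_{\ppv,[s,t]} + \delta K^i_{s,t}$; extension to arbitrary $(s,t)\in \mathcal{S}_{[0,T]}$ follows by splitting into at most $\lceil(t-s)/\delta_\RP\rceil$ pieces of length $\leq \delta_\RP$ and using $(\sum a_j)^p \leq N^{p-1}\sum a_j^p$, which produces the two regimes encoded in $\phi_p(x) = x \vee x^p$ depending on whether $\kappab_{\RP,K^i}(s,t) \leq 1$ or not.

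For \ref{item:ii}, I would first observe the identity $R^{\Delta\sigma(Y)} = R^{\sigma(Y^1)} - R^{\sigma(Y^2)}$, immediate from \eqref{eq:defRRs}, and then derive from the Taylor expansion above the decomposition
\[R^{\Delta\sigma(Y)}_{s,t} = \sigma'(Y^1_s) R^{\Delta Y}_{s,t} + \Delta\sigma'(Y)_s R^{Y^2}_{s,t} + (\delta Y^1_{s,t}+\delta Y^2_{s,t})\rho_1(s,t)\, \delta\Delta Y_{s,t} + (\delta Y^2_{s,t})^2 [\rho_1(s,t)-\rho_2(s,t)].\]
The remainder $R^{\Delta Y}$ satisfies an analogous expression $R^{\Delta Y}_{s,t} = \delta\Delta K_{s,t} + \Delta[\sigma'\sigma(Y)]_s \mathbb{X}_{s,t} + A^{\Delta Y}_{s,t}$, from which the $w_K$ and $\|\Delta Y\|_\infty$ contributions emerge. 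Each of the four terms is bounded in $p/2$-variation using $\|\Delta \sigma'(Y)\|_\infty \leq \|\sigma''\|_\infty \|\Delta Y\|_\infty$, the Lipschitz regularity of $\sigma''$ (so $|\rho_1-\rho_2| \lesssim \|\Delta Y\|_\infty$), and the already-established estimates \ref{item:i} and \ref{item:i'} to eliminate $\|Y^i\|_\pv^p$ and $\|R^{Y^i}\|_{\ppv}^{p/2}$ in favor of $\kappab_{\RP,K^i}$. The coupled bootstrap between $R^{\Delta\sigma(Y)}$ and $R^{\Delta Y}$ closes on intervals of length $\leq \delta_\RP$ exactly as above. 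The iterated products of rough-path factors and $\phi_p(\kappab_{\RP,K^i})$-type bounds through the Taylor terms account for the powers up to $\kappab^4$ collected in $\widetilde{\kappab}_{\RP,K}$.

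The main obstacle is \ref{item:ii}. The coupling between $R^{\Delta\sigma(Y)}$ and $R^{\Delta Y}$ forces a simultaneous bootstrap, and factors like $\Delta\sigma'(Y)_s \sim \|\Delta Y\|_\infty$ get multiplied by rough-path quantities already controlled by $\phi_p(\kappab_{\RP,K^i})^{2/p}$; further iterations combine these with $\|Y^i\|_\pv^{p}$, producing terms up to $\kappab^4$. Careful bookkeeping of which factors carry $w_K$, which carry $\|\Delta Y\|_\infty$, and which carry the various powers of $\kappab_{\RP,K}$, is the technical heart of the proof and determines that the fourth power in $\widetilde{\kappab}_{\RP,K}$ is precisely what is needed to close the estimate.
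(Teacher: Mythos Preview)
Your proposal is correct and follows essentially the same route as the paper: Taylor-expand $\sigma$ to relate $R^{\sigma(Y^i)}$ and $R^{Y^i}$, use the rough integral bound \eqref{eq:boundRI} for $R^{Y^i}$, absorb the coupling on intervals of length $\leq\delta_\RP$, and then run the same programme on the differences via the four-term decomposition of $R^{\Delta\sigma(Y)}$. The only organisational difference is that the paper closes the final bootstrap through $\|\Delta Y\|_{p,[s,t]}$ directly (its Step~5) rather than through $\|R^{\Delta Y}\|_{p/2,[s,t]}$ as you suggest; since $\|\Delta Y\|_{p}\leq C\|\Delta Y\|_\infty\|X\|_p+\|R^{\Delta Y}\|_{p/2}$, the two choices are equivalent and produce the same powers of $\kappab_{\RP,K}$ collected in $\widetilde{\kappab}_{\RP,K}$.
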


\begin{remark}\label{rem:pvar}
Assertions \ref{item:i} and \ref{item:i'} are not surprising in light of classical \emph{a priori} estimates (cf \cite[Proposition 8.3]{FrizHairer}), but the novelty is that the control depends on $K$ and that we use the identity 
\[
\delta K^i_{s,t}  = \|K^i\|_{p,[s,t]}.
\]
This equality, which relies on the fact that $K^i$ is non-decreasing, is crucial in Section~\ref{sec:penalisation} to establish the existence of $Y$ and the uniform \emph{a priori} estimates of Lemma~\ref{lem:unifBoundRYn}.
\end{remark}

\begin{proof}
	\textbf{$1^{\text{st}}$ step.} By a Taylor expansion,
	\begin{align*}
	\delta \sigma(Y^i)_{s,t} = \sigma'(Y^i_s) \delta Y^i_{s,t} + \int_{Y^i_s}^{Y^i_t} \sigma''(y) (Y^i_t-y) \dd y .
	\end{align*}
	Hence the combination of \eqref{eq:defRRs} and the previous equality yields
	\begin{align}\label{eq:RsigRi}
	\Rsi_{s,t} = \sigma'(Y^i_s) \Ri_{s,t} + \int_{Y^i_s}^{Y^i_t} \sigma''(y) (Y^i_t-y) \dd y 
	\end{align}
	and
	\begin{align}\label{eq:bound1Rsi}
	|\Rsi_{s,t}|\leq \|\sigma'\|_{\infty} |\Ri_{s,t}| + \tfrac{1}{2}\|\sigma''\|_{\infty} (Y^i_t-Y^i_s)^2.
	\end{align}

	Since we assumed that $Y^1$ and $Y^2$ are controlled by $X$, the definition \eqref{eq:defRRs} of $\Ri$  and inequality \eqref{eq:boundRI} applied to $|\int_s^t \sigma(Y^i_u)\dd \RP_u - \sigma(Y^i_s)\delta X_{s,t}|$ yield
	\begin{align*}
	|\Ri_{s,t}| \leq \delta K^i_{s,t} + |\sigma'(Y^i_s)\sigma(Y^i_s)| |\mathbb{X}_{s,t}| + C_p\left( \|X\|_{\pv,[s,t]} \|\Rsi\|_{\frac{p}{2},[s,t]} + \|\sigma'(Y^i)\sigma(Y^i)\|_{\pv,[s,t]} \|\mathbb{X}\|_{\frac{p}{2},[s,t]} \right) .
	\end{align*}
	Setting $M = C_p (1+\|\sigma'\|_{\infty} + \tfrac{1}{2}\|\sigma''\|_{\infty})$ and
\begin{align*}
	\delta_\RP := T\wedge \sup\left\{\delta>0:~  \kappab_{\RP}(s,t)^{\frac{1}{p}}\leq \tfrac{1}{2} M^{-1} ,~\forall s,t\in[0,T] ~\text{s.t. } |t-s|\leq \delta\right\},
	\end{align*}
using \eqref{eq:bound1Rsi} and a standard argument \cite[p.110]{FrizHairer}, one obtains 
that for any $(s,t)\in\mathcal{S}_{[0,T]}$ such that $|t-s|\leq \delta_\RP$,
	\begin{align}\label{eq:bound1Ri}
	\|\Ri\|_{\ppv,[s,t]}
	&\leq 2\delta K^i_{s,t} + (2M+1)\|\mathbb{X}\|_{\ppv,[s,t]} + 2\|Y^i\|_{\pv,[s,t]}^2 .
	\end{align}
	Since we assumed that $\vertiii{\RP}_{\beta}<\infty$, observe that $\vertiii{\RP}_{\pv,[s,t]} \leq \vertiii{\RP}_{\beta} |t-s|^\beta$ and therefore we deduce that $\delta_\RP \geq (2 M \vertiii{\RP}_{\beta})^{-1/\beta}\wedge T$, as claimed in the statement of the Lemma.\\
	We recall briefly how to deduce a bound for $\|Y^i\|_p$. By the definition \eqref{eq:defRRs} of $\Ri$, and using estimate \eqref{eq:bound1Ri},
	\begin{align*}
	|\delta Y^i_{s,t}| 
	&\leq 2\delta K^i_{s,t} + (2M+1)\|\mathbb{X}\|_{\ppv,[s,t]} + 2\|Y^i\|_{\pv,[s,t]}^2 +\|\sigma\|_\infty \|X\|_{p,[s,t]}.
	\end{align*}
	Arguing as in \cite[p.111-112]{FrizHairer} with the only difference that there is now the term $\delta K^i_{s,t}$, one obtains (up to choosing a possibly larger $M$ which would still depend only on $p$ and the uniform norm of $\sigma$ and its derivatives) that for any $(s,t)\in\mathcal{S}_{[0,T]}$ such that $|t-s|\leq \delta_\RP$,
	\begin{align}\label{eq:boundYipv}
	\|Y^i\|_{p,[s,t]}^p 
	&\leq C ~\kappab_{\RP,K^i}(s,t), ~i=1,2,
	\end{align}
	for some $C>0$ that depends only on $p$ and $\sigma$. Besides, one can get a global upper bound as in \cite[Exercise 4.24]{FrizHairer}, obtaining that for any $(s,t)\in\mathcal{S}_{[0,T]}$,
	\begin{align*}
	\|Y^i\|_{p,[s,t]}^p &\leq C 
	\left(\kappab_{\RP,K^i}(s,t) \vee \kappab_{\RP,K^i}(s,t)^p\right)  
	= C ~\phi_p\left( \kappab_{\RP,K^i}(s,t)\right).
	\end{align*}
	The inequality \eqref{eq:boundRRspv} now follows easily from \eqref{eq:boundYipv}, \eqref{eq:bound1Rsi} and \eqref{eq:bound1Ri}. This achieves to prove \ref{item:i} and \ref{item:i'}.\\

	\textbf{$2^{\text{nd}}$ step.}  Given the definition of $R^{\Delta\sigma(Y)}$ and $R^{\Delta Y}$, we deduce from \eqref{eq:RsigRi} that
	\begin{align*}
	R^{\Delta\sigma(Y)}_{s,t} = & \sigma'(Y^1_s) R^{Y^1}_{s,t} + (\delta Y^1_{s,t})^2 \int_0^1 \sigma''(Y^1_t-(\delta Y^1_{s,t})u) u \dd u - \sigma'(Y^2_s) R^{Y^2}_{s,t} \\
	&\hspace{1cm}- (\delta Y^2_{s,t})^2 \int_0^1 \sigma''(Y^2_t-(\delta Y^2_{s,t})u) u \dd u\\
	=& \sigma'(Y^1_s) R^{\Delta Y}_{s,t} +\Delta\sigma'(Y)_s R^{Y^2}_{s,t} + \Delta (\delta Y_{s,t})^2 \int_0^1 \sigma''(Y^1_t-(\delta Y^1_{s,t})u) u \dd u \\
	&\hspace{1cm}+ (\delta Y^2_{s,t})^2 \int_0^1 \Delta \sigma''(Y_t-(\delta Y_{s,t})u) u \dd u.
	\end{align*}
	Hence
	\begin{align}\label{eq:boundRdeltasigYi}
	|R^{\Delta\sigma(Y)}_{s,t} | &\leq C \left\{ |R^{\Delta Y}_{s,t}| +|\Delta Y_s| |R^{Y^2}_{s,t}|+ |\Delta (\delta Y_{s,t})^2| + \|\Delta Y\|_{\infty,[s,t]} (\delta Y^2_{s,t})^2 \right\}.
	\end{align}
	We now provide an upper bound on $|R^{\Delta Y}_{s,t}|$ in terms of $R^{\Delta\sigma(Y)}_{s,t}$:
	\begin{align}\label{eq:boundRdeltaYi}
	|R^{\Delta Y}_{s,t}| &= |\delta(\Delta Y)_{s,t} - \Delta\sigma(Y)_s \delta X_{s,t}| \nonumber\\
	&= |\delta (\Delta K)_{s,t} + \int_s^t \Delta\sigma(Y)_u \dd\RP_u - \Delta\sigma(Y)_s \delta X_{s,t} | \nonumber\\
	&\leq w_K(s,t) + |\Delta\sigma'\sigma(Y)_s \mathbb{X}_{s,t}| + C_p\left(\|R^{\Delta \sigma(Y)}\|_{\ppv,[s,t]} \|X\|_{\pv,[s,t]} + \|\Delta\sigma'\sigma(Y)\|_{\pv,[s,t]} \|\mathbb{X}\|_{\ppv,[s,t]} \right).
	\end{align}
	Thus plugging \eqref{eq:boundRdeltaYi} into \eqref{eq:boundRdeltasigYi} yields
	\begin{align}\label{eq:boundRDeltaY}
	|R^{\Delta\sigma(Y)}_{s,t} | \leq C_p \|R^{\Delta \sigma(Y)}\|_{\ppv,[s,t]} \|X\|_{\pv,[s,t]} + C \bigg\{ & w_K(s,t) + |\Delta Y_s| (|R^{Y^2}_{s,t}| + |\mathbb{X}_{s,t}|) + |\Delta (\delta Y_{s,t})^2| \nonumber \\
	& + \|\Delta Y\|_{\infty,[s,t]} (\delta Y^2_{s,t})^2 +\|\Delta\sigma'\sigma(Y)\|_{\pv,[s,t]} \|\mathbb{X}\|_{\ppv,[s,t]}\bigg\}.
	\end{align}

	 \textbf{$3^{\text{rd}}$ step.} We now provide bounds for $|\Delta (\delta Y_{s,t})^2|$ and $\|\Delta\sigma'\sigma(Y)\|_{\pv,[s,t]}$.
	Observing that $|\Delta (\delta Y_{s,t})^2| = |\delta(\Delta Y)_{s,t}|~|\delta Y^1_{s,t}+\delta Y^2_{s,t}|$ and using the Cauchy-Schwarz inequality, one gets that for any subdivision $\pi=(t_i)$ of $[s,t]$,
	\begin{align}\label{eq:boundDeltadeltaY}
	\sum_{i} \left|\Delta (\delta Y_{t_i,t_{i+1}})^2\right|^{\frac{p}{2}}  &\leq \left( \sum_{i} |\delta (\Delta Y)_{t_i,t_{i+1}}|^p   \times \sum_{i} |\delta Y^1_{s,t}+\delta Y^2_{s,t}|^{p} \right)^{\frac{1}{2}} \nonumber\\
	&\leq \|\Delta Y\|_{\pv,[s,t]}^{\frac{p}{2}} \|Y^1+Y^2\|_{\pv,[s,t]}^{\frac{p}{2}} \nonumber\\
	&\leq C \|\Delta Y\|_{\pv,[s,t]}^{\frac{p}{2}} ~\kappab_{\RP,K}(s,t)^{\frac{1}{2}},
	\end{align}
	using \eqref{eq:boundYipv} in the last inequality.
	Now, using the smoothness of $\sigma$, observe that
	\begin{align*}
	|\delta(\Delta \sigma'\sigma(Y))_{s,t}| &= |\Delta Y_t \int_0^1 (\sigma'\sigma)'(Y^2_t + \Delta Y_t~ u) \dd u -  \Delta Y_s \int_0^1 (\sigma'\sigma)'(Y^2_s + \Delta Y_s~ u) \dd u | \\
	&\leq |\delta(\Delta Y)_{s,t} \int_0^1 (\sigma'\sigma)'(Y^2_t + \Delta Y_t u) \dd u| \\
	&\hspace{1cm}+ |\Delta Y_s|  \int_0^1 |(\sigma'\sigma)'(Y^2_t + \Delta Y_t~ u)- (\sigma'\sigma)'(Y^2_s + \Delta Y_s~ u)| \dd u  \\
	&\leq C\left( |\delta(\Delta Y)_{s,t}| + |\Delta Y_s| \left( |\delta Y^2_{s,t}| + |\delta(\Delta Y)_{s,t}| \right) \right) \\
	&\leq C\left( |\delta(\Delta Y)_{s,t}| + |\Delta Y_s| \left( |\delta Y^1_{s,t}| + |\delta Y^2_{s,t}| \right) \right).
	\end{align*}
	Therefore, using again \eqref{eq:boundYipv},
	\begin{align}\label{eq:deltaDeltasig}
	\|\Delta\sigma'\sigma(Y)\|_{\pv,[s,t]} \leq C \left(\|\Delta Y\|_{\pv,[s,t]} + \|\Delta Y\|_{\infty,[s,t]} ~\kappab_{\RP,K}(s,t)^{\frac{1}{p}} \right) .
	\end{align}
	
	\textbf{$4^{\text{th}}$ step.} The inequalities \eqref{eq:boundYipv}, \eqref{eq:boundRRspv}, \eqref{eq:boundDeltadeltaY} and \eqref{eq:deltaDeltasig} plugged into \eqref{eq:boundRDeltaY} now provide that for any $(s,t)\in\mathcal{S}_{[0,T]}$ such that $|t-s|\leq \delta_\RP$,
	\begin{align*}
	\|R^{\Delta\sigma(Y)} \|_{\ppv,[s,t]} &\leq C_p \|R^{\Delta \sigma(Y)}\|_{\ppv,[s,t]} \|X\|_{\pv,[s,t]} + C \bigg\{ w_K(s,t) + \|\Delta Y\|_{\infty,[s,t]} \left( \kappab_{\RP,K}(s,t)^{\frac{2}{p}} + \|\mathbb{X}\|_{\ppv,[s,t]}\right)\\
	&\quad\quad\quad + \|\Delta Y\|_{p,[s,t]} \kappab_{\RP,K}(s,t)^{\frac{1}{p}} + \|\Delta Y\|_{\infty,[s,t]} \kappab_{\RP,K}(s,t)^{\frac{2}{p}} \\
	&\quad\quad\quad +\left(\|\Delta Y\|_{\pv,[s,t]} + \|\Delta Y\|_{\infty,[s,t]} ~\kappab_{\RP,K}(s,t)^{\frac{1}{p}} \right) \|\mathbb{X}\|_{\ppv,[s,t]} \bigg\} .
	\end{align*}
	Besides, since $\kappab_{\RP,K}(s,t)^{\frac{1}{p}} \|\mathbb{X}\|_{\ppv,[s,t]} \leq \kappab_{\RP,K}(s,t)^{\frac{2}{p}}$, we get that for any $(s,t)\in\mathcal{S}_{[0,T]}$ such that $|t-s|\leq \delta_\RP$,
	\begin{align*}
	\|R^{\Delta \sigma(Y)} \|_{\ppv,[s,t]} \leq C_p \|R^{\Delta \sigma(Y)}\|_{\ppv,[s,t]} \|X\|_{\pv,[s,t]} + C \Big\{ w_K(s,t) + \|\Delta Y\|_{\infty,[s,t]} \kappab_{\RP,K}(s,t)^{\frac{2}{p}} \\
	\hspace{1cm} + \|\Delta Y\|_{\pv,[s,t]} \kappab_{\RP,K}(s,t)^{\frac{1}{p}}\Big\} .
	\end{align*}
	In view of the definition of $M$ and $\delta_\RP$ in the first Step, there is $\|X\|_{p,[s,t]} \leq (2C_p)^{-1}$ for any $|t-s|\leq \delta_\RP$. Thus for any $(s,t)\in\mathcal{S}_{[0,T]}$ such that $|t-s|\leq \delta_\RP$, one gets that
	\begin{align}\label{eq:boundRDeltasigY}
	\|R^{\Delta\sigma(Y)} \|_{\ppv,[s,t]} \leq 2C \left\{w_K(s,t) + \|\Delta Y\|_{\infty,[s,t]} \kappab_{\RP,K}(s,t)^{\frac{2}{p}}+ \|\Delta Y\|_{\pv,[s,t]}\kappab_{\RP,K}(s,t)^{\frac{1}{p}}\right\}.
	\end{align}
	
	\textbf{$5^{\text{th}}$ step.} It remains to bound $\|\Delta Y\|_{\pv,[s,t]}$. From the definition of $\Delta Y$ and Inequality \eqref{eq:boundRI},
	\begin{align*}
	|\delta(\Delta Y)_{s,t}| = |\delta(\Delta K)_{s,t} + \int_s^t \Delta\sigma(Y)_u \dd\RP_u|&\leq w_K(s,t) + |\Delta\sigma(Y)_s||\delta X_{s,t}| + |\Delta \sigma'\sigma(Y)_s| |\mathbb{X}_{s,t}| \\
	&\quad+ C_p\left(\|R^{\Delta\sigma(Y)}\|_{\ppv,[s,t]} \|X\|_{\pv,[s,t]} + \|\Delta\sigma'\sigma(Y)\|_{\pv,[s,t]} \|\mathbb{X}\|_{\ppv,[s,t]}\right) .
	\end{align*}
	The previous bounds \eqref{eq:deltaDeltasig} and \eqref{eq:boundRDeltasigY} can now be used as follows:  $\forall(s,t)\in\mathcal{S}_{[0,T]}$ such that $|t-s|\leq \delta_\RP$,
	\begin{align*}
	\|\Delta Y\|_{\pv,[s,t]} &\leq  w_K(s,t) + C\bigg\{ \|\Delta Y\|_{\infty,[s,t]} \left(\|X\|_{\pv,[s,t]} + \|\mathbb{X}\|_{\ppv,[s,t]} \right)\\
	&\quad+ \|X\|_{\pv,[s,t]} \left(w_K(s,t) + \|\Delta Y\|_{\infty,[s,t]} \kappab_{\RP,K}(s,t)^{\frac{2}{p}}+ \|\Delta Y\|_{\pv,[s,t]}\kappab_{\RP,K}(s,t)^{\frac{1}{p}} \right) \\
	&\quad + \|\mathbb{X}\|_{\ppv,[s,t]} \left( \|\Delta Y\|_{\pv,[s,t]} + \|\Delta Y\|_{\infty,[s,t]} ~\kappab_{\RP,K}(s,t)^{\frac{1}{p}}   \right)\bigg\} \\
	&\leq \tfrac{1}{2} C_Y \|\Delta Y\|_{\pv,[s,t]} \left(\|X\|_{\pv,[s,t]}\kappab_{\RP,K}(s,t)^{\frac{1}{p}} + \|\mathbb{X}\|_{\ppv,[s,t]} \right) + w_K(s,t)\left(1+ C\|X\|_{p,[s,t]} \right) \\
	&\quad + C \|\Delta Y\|_{\infty,[s,t]} \left( \kappab_{\RP,K}(s,t)^{\frac{1}{p}} +\kappab_{\RP,K}(s,t)^{\frac{3}{p}}\right) \\
	&\leq C_Y \|\Delta Y\|_{\pv,[s,t]} \kappab_{\RP,K}(s,t)^{\frac{2}{p}} + C (w_K(s,t)+\|\Delta Y\|_{\infty,[s,t]}) \left(1+ \kappab_{\RP,K}(s,t)^{\frac{1}{p}} +\kappab_{\RP,K}(s,t)^{\frac{3}{p}}\right),
	\end{align*}
	where $C_Y>0$ depends only on $p$ and $\sigma$. Since 
	\[
	(s,t) \mapsto C (w_K(s,t)+\|\Delta Y\|_{\infty,[s,t]})^p \left(1+ \kappab_{\RP,K}(s,t)^{\frac{1}{p}} +\kappab_{\RP,K}(s,t)^{\frac{3}{p}}\right)^p
	\]
	is super-additive, we obtain by a classical argument that \(\forall (s,t)\in \mathcal{S}_{[0,T]}\),
	\begin{align*}
	\|\Delta Y\|_{\pv,[s,t]} &\leq \left(2(2C_Y)^{\frac{p}{2}} \right)^{\frac{p-1}{p}} \left( 1\vee \kappab_{\RP,K}(s,t)^{\frac{p-1}{p}}\right) \times C (w_K(s,t)+\|\Delta Y\|_{\infty,[s,t]})  \sum_{\substack{j=0\\j\neq 2}}^3 \kappab_{\RP,K}(s,t)^{\frac{j}{p}} ,
	\end{align*}
	so that for any \((s,t)\in \mathcal{S}_{[0,T]}\),
	\begin{align*}
	\|\Delta Y\|_{\pv,[s,t]} \kappab_{\RP,K}(s,t)^{\frac{1}{p}} 
	&\leq C (w_K(s,t)+\|\Delta Y\|_{\infty,[s,t]}) \left( \widetilde{\kappab}_{\RP,K}(s,t)^{\frac{1}{p}} \vee \widetilde{\kappab}_{\RP,K}(s,t)\right).
	\end{align*}
	By plugging the above bound into the right-hand side of  \eqref{eq:boundRDeltasigY}, one obtains part \ref{item:ii} of the desired result. For further use, note also that the previous bound can be used in \eqref{eq:deltaDeltasig} to get that for any $(s,t)\in \mathcal{S}_{[0,T]}$,
	\begin{align}\label{eq:deltaDeltasig2}
	\|\Delta\sigma'\sigma(Y)\|_{\pv,[s,t]} \|\mathbb{X}\|_{\ppv,[s,t]} &\leq C \left( \|\Delta Y\|_{\pv,[s,t]} \kappab_{\RP,K}(s,t)^{\frac{1}{p}} + \|\Delta Y\|_{\infty,[s,t]} \kappab_{\RP,K}(s,t)^{\frac{1}{p}} \right)\nonumber\\ 
	&\leq C (w_K(s,t)+\|\Delta Y\|_{\infty,[s,t]}) \left( \widetilde{\kappab}_{\RP,K}(s,t)^{\frac{1}{p}} \vee \widetilde{\kappab}_{\RP,K}(s,t)\right).
	\end{align}
\end{proof}

%~~~~~~~~~~~~~~~~~~~~~~~~~~~~~~~

\section*{Acknowledgements}
This work was partially supported by the ECOS-Sud Program Chili-France C15E05 and the Math-AmSud project SARC. 
S.T. is partially supported by the Project Fondecyt N. 1171335. %acknowledges the financial support of Fondecyt  through grant number 1171335
The authors are grateful to Paul Gassiat for pointing out a simplification at the beginning of the proof of uniqueness.


\begin{thebibliography}{32}
	\providecommand{\natexlab}[1]{#1}
	\providecommand{\url}[1]{\texttt{#1}}
	\expandafter\ifx\csname urlstyle\endcsname\relax
	\providecommand{\doi}[1]{doi: #1}\else
	\providecommand{\doi}{doi: \begingroup \urlstyle{rm}\Url}\fi
	
	\bibitem[Aida(2015)]{Aida}
	S.~Aida.
	\newblock Reflected rough differential equations.
	\newblock \emph{Stochastic Process. Appl.}, 125\penalty0 (9):\penalty0
	3570--3595, 2015.
	
	\bibitem[Bailleul and Catellier(2018)]{BailleulCatellier}
	I.~Bailleul and R.~Catellier.
	\newblock Non-explosion criteria for rough differential equations driven by
	unbounded vector fields.
	\newblock 2018.
	\newblock \emph{ArXiv preprint}
	\href{https://arxiv.org/abs/1802.04605}{arXiv:1802.04605}.

	\bibitem[Bouleau and Hirsch(1986)]{BouleauHirsch}
	N.~Bouleau and F.~Hirsch.
	\newblock Propri{\'e}t{\'e}s d'absolue continuit{\'e} dans les espaces de Dirichlet et applications aux {\'e}quations diff{\'e}rentielles stochastiques.
	\newblock \emph{S{\'e}minaire de probabilit{\'e}s de Strasbourg}, 20\penalty0:\penalty0 131--161, 1986.

	
	\bibitem[Cass et~al.(2013)Cass, Litterer, and Lyons]{CassLittererLyons}
	T.~Cass, C.~Litterer, and T.~Lyons.
	\newblock {Integrability and tail estimates for Gaussian rough differential
		equations}.
	\newblock \emph{Ann. Probab.}, 41\penalty0 (4):\penalty0 3026--3050, 2013.
	
	\bibitem[Cass et~al.(2015)Cass, Hairer, Litterer, and
	Tindel]{CassHairerLittererTindel}
	T.~Cass, M.~Hairer, C.~Litterer, and S.~Tindel.
	\newblock {Smoothness of the density for solutions to Gaussian rough
		differential equations}.
	\newblock \emph{Ann. Probab.}, 43\penalty0 (1):\penalty0 188--239, 2015.
	
	\bibitem[Castaing et~al.(2017)Castaing, Marie, and Raynaud~de
	Fitte]{CastaingEtAl}
	C.~Castaing, N.~Marie, and P.~Raynaud~de Fitte.
	\newblock Sweeping processes perturbed by rough signals.
	\newblock 2017.
	\newblock \emph{Arxiv preprint}
	\href{https://arxiv.org/abs/1702.06495}{arXiv:1702.06495}.
	
	\bibitem[Davie(2008)]{Davie}
	A.~M. Davie.
	\newblock Differential equations driven by rough paths: an approach via
	discrete approximation.
	\newblock \emph{Applied Mathematics Research eXpress}, 2008, 2008.
	
	
	\bibitem[Deya et~al.(2018)Deya, Gubinelli, Hofmanov\'a, and Tindel]{DeyaEtAl}
	A.~Deya, M.~Gubinelli, M.~Hofmanov\'a, and S.~Tindel.
	\newblock One-dimensional reflected rough differential equations.
	\newblock \emph{Stochastic Process. Appl.}, 129\penalty0 (9):\penalty0 3261--3281, 2019.
	
	\bibitem[Deya et~al.(2019)Deya, Gubinelli, Hofmanov\'a, and Tindel]{DeyaEtAl0}
	A.~Deya, M.~Gubinelli, M.~Hofmanov\'a, and S.~Tindel.
	\newblock {A priori estimates for rough PDEs with application to rough
		conservation laws}.
	\newblock \emph{J. Funct. Anal.}, 276\penalty0 (12):\penalty0 3577--3645, 2019.
	
	\bibitem[Doss(1977)]{Doss}
	H.~Doss.
	\newblock Liens entre équations différentielles stochastiques et ordinaires.
	\newblock \emph{Ann. Inst. Henri Poincar\'{e} Probab. Stat.}, 13\penalty0
	(2):\penalty0 99--125, 1977.
	
	\bibitem[El~Karoui(1975)]{ElKaroui}
	N.~El~Karoui.
	\newblock {Processus de réflexion dans $\mathbb{R}^n$}.
	\newblock In \emph{S{\'e}minaire de Probabilit{\'e}s IX Universit{\'e} de
		Strasbourg}, pages 534--554. Springer, 1975.
	
	\bibitem[El~Karoui et~al.(1997)El~Karoui, Kapoudjian, Pardoux, Peng, and
	Quenez]{ElKarouiEtAl}
	N.~El~Karoui, C.~Kapoudjian, E.~Pardoux, S.~Peng, and M.~Quenez.
	\newblock {Reflected solutions of backward SDE's, and related obstacle problems
		for PDE's}.
	\newblock \emph{Ann. Probab.}, pages 702--737, 1997.
	
	\bibitem[Falkowski and S{\l}omi\'nski(2015)]{Slominski}
	A.~Falkowski and L.~S{\l}omi\'nski.
	\newblock Stochastic differential equations with constraints driven by
	processes with bounded p-variation.
	\newblock \emph{Probab. Math. Statist.}, 35\penalty0 (2):\penalty0 343--365,
	2015.
	
	\bibitem[Ferrante and Rovira(2013)]{FerranteRovira}
	M.~Ferrante and C.~Rovira.
	\newblock {Stochastic differential equations with non-negativity constraints
		driven by fractional Brownian motion}.
	\newblock \emph{J. Evol. Equ.}, 13\penalty0 (3):\penalty0 617--632, 2013.
	
	\bibitem[Friz and Oberhauser(2009)]{FrizOberhauser}
	P.~Friz and H.~Oberhauser.
	\newblock {Rough path limits of the Wong--Zakai type with a modified drift
		term}.
	\newblock \emph{J. Funct. Anal.}, 256\penalty0 (10):\penalty0 3236--3256, 2009.
	
	\bibitem[Friz and Hairer(2014)]{FrizHairer}
	P.~K. Friz and M.~Hairer.
	\newblock \emph{A Course on Rough Path, with an Introduction to Regularity
		Structures}.
	\newblock Springer, 2014.
	
	\bibitem[Friz and Shekhar(2017)]{FrizShekhar}
	P.~K. Friz and A.~Shekhar.
	\newblock {General rough integration, L{\'e}vy rough paths and a
		L{\'e}vy--Kintchine-type formula}.
	\newblock \emph{Ann. Probab.}, 45\penalty0 (4):\penalty0 2707--2765, 2017.
	
	\bibitem[Friz and Victoir(2010)]{FrizVictoir}
	P.~K. Friz and N.~B. Victoir.
	\newblock \emph{{Multidimensional Stochastic Processes as Rough Paths: Theory
			and Applications}}, volume 120.
	\newblock Cambridge University Press, 2010.
	
	\bibitem[Gubinelli(2004)]{Gubinelli}
	M.~Gubinelli.
	\newblock Controlling rough paths.
	\newblock \emph{J. Funct. Anal.}, 216\penalty0 (1):\penalty0 86--140, 2004.
	
	\bibitem[Hu and Nualart(2007)]{HuNualart}
	Y.~Hu and D.~Nualart.
	\newblock {Differential equations driven by Hölder continuous functions of
		order greater than $1/2$}.
	\newblock In F.~Benth, G.~Di~Nunno, T.~Lindstr{\o}m, B.~{\O}ksendal, and
	T.~Zhang, editors, \emph{Stochastic Analysis and Applications}, Abel
	Symposium, vol. 2, pages 399--413. 2007.
	
	\bibitem[Lejay(2009)]{LejayRDE}
	A.~Lejay.
	\newblock On rough differential equations.
	\newblock \emph{Electron. J. Probab.}, 14:\penalty0 341--364, 2009.
	
	\bibitem[Lejay(2012)]{LejayUnboundd}
	A.~Lejay.
	\newblock Global solutions to rough differential equations with unbounded
	vector fields.
	\newblock In \emph{S{\'e}minaire de probabilit{\'e}s XLIV}, pages 215--246.
	Springer, 2012.
	
	\bibitem[Lions and Sznitman(1984)]{LionsSznitman}
	P.-L. Lions and A.-S. Sznitman.
	\newblock Stochastic differential equations with reflecting boundary
	conditions.
	\newblock \emph{Comm. Pure Appl. Math.}, 37\penalty0 (4):\penalty0 511--537,
	1984.
	
	\bibitem[Lyons(1998)]{Lyons98}
	T.~J. Lyons.
	\newblock Differential equations driven by rough signals.
	\newblock \emph{Rev. Mat. Iberoam.}, 14\penalty0 (2):\penalty0 215--310, 1998.
	
	\bibitem[McKean(1963)]{McKean}
	H.~McKean.
	\newblock {A Skorohod’s stochastic integral equation for a reflecting barrier
		diffusion}.
	\newblock \emph{J. Math. Kyoto Univ.}, 3\penalty0 (1):\penalty0 85--88, 1963.
	
	\bibitem[Nualart(2006)]{Nualart}
	D.~Nualart.
	\newblock \emph{{The Malliavin Calculus and Related Topics}}.
	\newblock Springer, 2006.
	
	\bibitem[Richard and Talay(2016)]{RichardTalay}
	A.~Richard and D.~Talay.
	\newblock {Hölder continuity in the Hurst parameter of functionals of
		stochastic differential equations driven by fractional Brownian motion}.
	\newblock 2016.
	\newblock \emph{Arxiv preprint}
	\href{https://arxiv.org/abs/1605.03475}{arXiv:1605.03475}.
	
	\bibitem[Riedel and Scheutzow(2017)]{RiedelScheutzow}
	S.~Riedel and M.~Scheutzow.
	\newblock Rough differential equations with unbounded drift term.
	\newblock \emph{J. Differ. Equat.}, 262\penalty0 (1):\penalty0 283--312, 2017.
	
	\bibitem[Skorokhod(1961)]{Skorokhod}
	A.~V. Skorokhod.
	\newblock Stochastic equations for diffusion processes in a bounded region.
	\newblock \emph{Theory Probab. Appl.}, 6\penalty0 (3):\penalty0 264--274, 1961.
	
	\bibitem[Sussmann(1978)]{Sussmann}
	H.~J. Sussmann.
	\newblock On the gap between deterministic and stochastic ordinary differential
	equations.
	\newblock \emph{Ann. Probab.}, 6\penalty0 (1):\penalty0 19--41, 1978.
	
	\bibitem[Słomiński(2013)]{Slominski13}
	L.~Słomiński.
	\newblock Weak and strong approximations of reflected diffusions via
	penalization methods.
	\newblock \emph{Stochastic Process. Appl.}, 123\penalty0 (3):\penalty0
	752--763, 2013.
	
	\bibitem[Tindel(1998)]{Tindel}
	S.~Tindel.
	\newblock Quasilinear stochastic elliptic equations with reflection: the
	existence of a density.
	\newblock \emph{Bernoulli}, 4\penalty0 (4):\penalty0 445--459, 1998.
	
	\bibitem[Young(1936)]{Young}
	L.~C. Young.
	\newblock {An inequality of the H{\"o}lder type, connected with Stieltjes
		integration}.
	\newblock \emph{Acta Math.}, 67\penalty0 (1):\penalty0 251--282, 1936.
	
\end{thebibliography}
\end{document}